%%
%% This is LaTeX2e input.
%%

%% The following tells LaTeX that we are using the 
%% style file amsart.cls (That is the AMS article style
%%
\documentclass[reqno]{amsart}
\usepackage[utf8]{inputenc}
\usepackage{amsthm,amsfonts,amstext,amssymb,mathrsfs,amsmath,latexsym}
\usepackage{cite} %for citation in numeric order
\usepackage{enumerate}
\usepackage[abs]{overpic}
\usepackage{epsfig}
\usepackage{graphicx}
\usepackage{hyperref}
\usepackage{comment}

\usepackage{tikz}

\usepackage{caption}
\usepackage{subcaption}

%% \documentclass[11pt]{amsart}
%% The following is very useful in keeping track of labels while
%% writing.  The variant   \usepackage[notcite]{showkeys}
%% does not show the labels on the \cite commands.
%%

% \usepackage{showkeys}

\newtheorem{thm}{Theorem}[section]
\newtheorem{prop}[thm]{Proposition}
\newtheorem{lem}[thm]{Lemma}
\newtheorem{cor}[thm]{Corollary}

\theoremstyle{definition}
\newtheorem{definition}[thm]{Definition}

\theoremstyle{remark}

\numberwithin{equation}{section}

\DeclareMathOperator{\dist}{dist} % The distance.
\DeclareMathOperator{\re}{Re}
\DeclareMathOperator{\im}{Im}
\DeclareMathOperator{\supp}{supp}

\DeclareMathOperator{\capac}{cap}
\DeclareMathOperator{\disth}{d_H}
\DeclareMathOperator{\diam}{diam}

\DeclareMathOperator{\distp}{d}

\newcommand{\C}{\mathbb{C}}
\newcommand{\R}{\mathbb{R}}

\newcommand{\Boh}{\mathcal{O}}
\newcommand{\boh}{\mathit{o}}
\newcommand\restr[2]{{% we make the whole thing an ordinary symbol
  \left.\kern-\nulldelimiterspace % automatically resize the bar with \right
  #1 % the function
  \vphantom{\big|} % pretend it's a little taller at normal size
  \right|_{#2} % this is the delimiter
  }}

%%
%% This is the end of the preamble.
%% 

\begin{document}

%%
%% The title of the paper goes here.  Edit to your title.
%%

\title{S-curves in  polynomial external fields}

\author{Arno B. J. Kuijlaars and Guilherme L. F. Silva}
\address{KU Leuven, Department of Mathematics, Celestijnenlaan 200B bus 2400, B-3001 Leuven, Belgium}
\email{arno.kuijlaars@wis.kuleuven.be, guilherme.silva@wis.kuleuven.be}
\thanks{The first author is supported by KU Leuven Research Grant OT/12/073, 
the Belgian Interuniversity Attraction Pole P07/18, FWO Flanders projects
G.0641.11 and G.0934.13, and by Grant No. MTM2011-28952-C02 of the Spanish 
Ministry of Science and Innovation. The second author is supported by 
FWO Flanders projects G.0641.11 and G.0934.13.}

\dedicatory{Dedicated to the memory of Andrei Aleksandrovich Gonchar and Herbert Stahl}

%\urladdr{www.}

%\author{Second Author}
%\address{}
%\email{}
%\urladdr{}

%%%
%%% The following is for the abstract.  The abstract is optional and
%%% if not used just delete, or comment out, the following.
%%%

\begin{abstract}

Curves in the complex plane that satisfy the S-property were first introduced
by Stahl and they were further studied by  Gonchar and Rakhmanov in the 1980s.
Rakhmanov recently showed the existence of curves with the S-property in a harmonic
external field by means of a max-min variational problem in logarithmic potential theory.
This is done in a fairly general setting, which however does not include the important special
case of an external field $\re V$ where $V$ is a polynomial  of degree $\geq 2$.
In this paper we give a detailed proof of the existence of a curve with the S-property
in the external field $\re V$ within the collection of all curves that connect two 
or more pre-assigned directions  at infinity in which $\re V \to +\infty$.  
Our method of proof is very much based on the works of Rakhmanov on the max-min variational 
problem and of Mart\'inez-Finkelshtein and Rakhmanov on critical measures. 
 
\end{abstract}

\maketitle

\tableofcontents

%%%%%%%%%%%%%%%%%%%%%%%%%%%%%%%%%%%%%%%%%%%%%%%%%%%%%%%%%%%%%%%%%%%%%%
\section{Introduction}
%%%%%%%%%%%%%%%%%%%%%%%%%%%%%%%%%%%%%%%%%%%%%%%%%%%%%%%%%%%%%%%%%%%%%%

The concept of {\it S-property} for logarithmic potentials appeared for the first time in the works of H. Stahl
\cite{stahl_extremal_domains,stahl_orthogonal_polynomials_complex_measures,stahl_orthogonal_polynomials_complex_weight_function,
stahl_structure_extremal_domains}, in the study of the limiting distribution of poles of Pad\'e approximants, 
see also \cite{stahl_sets_minimal_capacity} for a recent survey and extension of these results. 

Inspired by Stahl's works, Gonchar and Rakhmanov extended the S-property to situations with 
external field \cite{gonchar_rakhmanov_rato_rational_approximation}. They then characterized the 
limiting distribution of zeros of certain {\it non-hermitian} orthogonal polynomials, subject to the existence of a 
certain curve with the S-property in an external field - the {\it S-curve}, see Section \ref{section_s_property} 
below for a precise definition.

Very recently, Rakhmanov returned to the question of existence of S-curves from a general 
perspective \cite{rakhmanov_orthogonal_s_curves}. He considered an associated {\it max-min equilibrium problem}
in logarithmic potential theory, and in an ``ideal'' situation he proved that this max-min problem 
has a solution and the solution  has the S-property. This general approach by Rakhmanov is very similar 
in spirit to his previous work with Kamvissis \cite{kamvissis_rakhmanov_energy_maximization}, where a max-min
problem for Green energy in a particular external field was considered.

In \cite{rakhmanov_orthogonal_s_curves}, Rakhmanov also pointed out some examples where his proposed 
``ideal'' situation does not apply, although similar considerations should also
lead to the existence of a curve with the S-property. The present work is aimed at studying this problem
in a very particular, but important, model, namely when the external field is given
by the real part of a polynomial.

Following the approach of Rakhmanov, for a polynomial $V$ we consider a max-min problem of the form
$$
\max_{\Gamma}\min_{\substack{\supp\mu\subset \Gamma \\ \mu(\C)=1}}
	\left[ \iint\log\frac{1}{|x-y|}d\mu(x)d\mu(y)+\int \re V(x)\, d\mu(x) \right],
$$
where the maximum is taken over a suitable class of contours $\Gamma$ (see Section \ref{section_admissible_contours} 
below for precise definitions), and the infimum is taken over the set of Borel probability measures $\mu$ on $\Gamma$. 
The aim of this work is to prove that the max-min problem has a solution, and this solution leads to an S-curve $\Gamma$.

The max-min approach is not the only possible approach to the existence of the S-curve. In some particular cases, 
it is possible to construct the S-curve explicitly from specific properties of the problem at hand, see e.g.
\cite{alvarez_alonso_medina_s_curves,aptekarev_arvesu_meixner, atia_martinez_martinez_thabet, claeys_wielonsky, 
bertola_tovbis_quartic_weight, deano_huybrechs_kuijlaars} for recent contributions. 
In general, the explicit determination of the S-curve - 
or more specifically the determination of the support of its equilibrium measure - is a hard problem, and some
numerical studies have also been carried out, see \cite{alvarez_alonso_medina_s_curves,bertola_tovbis_quartic_weight}.

Motivated by questions in Random Matrix Theory, Bertola \cite{bertola_boutroux} also studied the existence of the 
S-curve for polynomial external field in the same framework as the present paper. 
By using deformation techniques, Bertola was able to show the existence of the S-curve. 
However, his approach just works when the underlying equilibrium
measure vanishes as a square root on the endpoints of its support, although it is very likely that his 
approach could also be adapted to remove this restriction. It is worth
emphasizing that in our work there is no restriction on the underlying equilibrium measure -
so we get the same result as Bertola but valid in the general situation.

An extension of the concept of S-curves to vector equilibrium problems appears in
the context of Hermite-Pad\'e approximation, see e.g.\ \cite{aptekarev_kuijlaars_vanassche,nuttall,rakhmanov_suetin}.
Vector equilibrium problems also appear in certain problems in random matrix theory,
for example for random matrix models with external source 
\cite{aptekarev_bleher_kuijlaars,bleher_delvaux_kuijlaars} and coupled random matrices \cite{duits_kuijlaars_mo},
see \cite{kuijlaars_survey} for an overview. The asymptotic analysis has been mostly restricted
to cases with enough symmetry so that the relevant contours are contained
in the real or imaginary axis. 
 A suitable existence theory for curves with S-property appropriate to vector
equilibrium problems with external fields, and possibly also upper constraints, would
be a major step towards the analysis of situations with less symmetry.
See \cite{aptekarev_lysov_tulyakov} for a first example in this direction.

%%%%%%%%%%%%%%%%%%%%%%%%%%%%%%%%%%%%%%%%%%%%%%%%%%%%%%%%%%%%%%%%%%%%%%
\section{Background and statement of the main result}\label{previous_remarks}
%%%%%%%%%%%%%%%%%%%%%%%%%%%%%%%%%%%%%%%%%%%%%%%%%%%%%%%%%%%%%%%%%%%%%%

To state our main result, we first need to introduce a few notations and notions.

Throughout this paper, $V$ always denotes a complex polynomial of degree $N\geq 2$ and 
$$
\varphi=\re V.
$$
Also, we denote
$$
D_R(z_0)=\{z\in \C \mid |z-z_0|<R\}, \qquad D_R=D_R(0).
$$

\subsection{Notions from potential theory} \label{potential_theory_notions}

Given a finite Borel measure $\mu$ on $\C$, its {\it logarithmic potential} at $x\in \C$ is defined by
$$
U^\mu(x)=\int \log\frac{1}{|x-y|} \, d\mu(y),
$$
and its {\it logarithmic energy} by
$$
I(\mu)=\iint \log\frac{1}{|x-y|} \, d\mu(x)d\mu(y)=\int U^\mu(x) \, d\mu(x),
$$
whenever these integrals make sense.

For a closed set $F\subset \C$ and $\varphi$ the real part of a polynomial as before, 
we define $\mathcal{M}_1(F)$ as the set of Borel probability measures $\mu$ supported on $F$, satisfying
the growth condition 
\begin{equation}\label{integrability_at_infinity}
\int\log(1+|x|)d\mu(x)<+\infty,
\end{equation}
and by $\mathcal{M}^\varphi_1(F)$ we denote the subset of $\mathcal M_1(F)$ consisting 
of all measures $\mu$ for which $I(\mu)$ is finite and $\varphi\in L^1(\mu)$.
The condition \eqref{integrability_at_infinity} assures the quantities $U^\mu(z),I(\mu)$ are well defined elements of
$(-\infty,+\infty]$ and $U^\mu(z)$ is finite for a.e.\ $z\in\C$ with respect to planar Lebesgue measure.

For $\mu\in \mathcal{M}^\varphi_1(F)$, the quantity
$$
I^\varphi(\mu)=I(\mu)+\int \varphi(x)d\mu(x),
$$
is called the {\it weighted logarithmic energy of $\mu$ in the external field $\varphi$}, 
or just weighted energy of $\mu$.

The following minimization problem is classical \cite{Saff_book}: find a measure 
$\mu^\varphi=\mu^{\varphi,F} \in \mathcal{M}^\varphi_1(F)$ for
which the infimum
\begin{equation}\label{definition_energy_functional}
I^\varphi(F)=\inf_{\mu \in \mathcal{M}^\varphi_1(F)} I^\varphi(\mu)
\end{equation}
is attained, i.e., for which $I^\varphi(\mu^\varphi)=I^\varphi(F)$. The measure 
$\mu^\varphi$ is called the {\it equilibrium measure} of
the set $F$ in the external field $\varphi$. For the case without external field, 
that is, when $\varphi\equiv 0$, the measure $\mu^0$ is also called the {\it Robin
measure} of $F$, the value $I(F)$ is the {\it Robin constant} of $F$, the constant $e^{-I(F)}$ 
is called {\it (logarithmic) capacity} of $F$ and denoted by $\capac F$.

For $\varphi$ the real part of a polynomial as before, if $\Gamma$ is a contour in $\C$ satisfying the growth condition
\begin{equation}\label{growth_condition}
 \lim_{\substack{z\to\infty \\ z\in\Gamma}}\varphi(z)=+\infty,
\end{equation}
the equilibrium measure $\mu^{\varphi,\Gamma}$ uniquely exists \cite{Saff_book} and 
can be characterized through the {\it Euler - Lagrange variational conditions}. It is
the only measure in $\mathcal M_1^\varphi(\Gamma)$ for which there exists a constant $l$ satisfying
\begin{align}
 U^{\mu}(z)+\frac{1}{2}\varphi(z) & = l,  \quad z\in\supp\mu, \label{variational_condition_1}\\
 U^{\mu}(z)+\frac{1}{2}\varphi(z) & \geq l, \quad z\in \Gamma. \label{variational_condition_2}
\end{align}

\subsection{S-property}\label{section_s_property}

\begin{definition}\label{definition_s_property}
A contour $\Gamma$ satisfying the growth condition \eqref{growth_condition} is said to possess the 
{\it S-property in the external field} $\varphi$ if its equilibrium measure
$\mu=\mu^{\varphi,\Gamma}$ in the external field satisfies the following. There is a finite set $E$ such that  
$\supp\mu\setminus E$ is locally an analytic arc at each of its points, and its 
potential $U^\mu = U^{\mu^{\varphi,\Gamma}}$ satisfies
\begin{equation} \label{Sproperty}
 \frac{\partial}{\partial n_+}\left( U^{\mu}+\frac{1}{2}\varphi\right)(z)= 
	\frac{\partial}{\partial n_-}\left( U^{\mu}+\frac{1}{2}\varphi\right)(z), \quad
	z\in\supp\mu\setminus E,
\end{equation} 
where $n_\pm$ are the unit normal vectors to $\supp\mu$ at $z$.
\end{definition}
If a contour has the S-property in the external field $\varphi$, we also call it an {\it S-curve} in the 
external field $\varphi$.
If the external field $\varphi$ is understood (as it is in this paper), we briefly call it an S-curve.

As it is clear already from the Definition \ref{definition_s_property}, the S-property is intrinsic to 
the equilibrium measure in the external field of the contour. If we modify the contour away from the support, 
keeping the equilibrium measure in the external field the
same for the modified contour, the S-property is preserved.

Although in the present work we are just interested in the S-property stated in terms of
logarithmic potentials, its analogue for potentials coming from other kernels, such as
the Green potential, have also their own interest \cite{kamvissis_rakhmanov_energy_maximization,
martinez_rakhmanov_suetin_variation_equilibrium_energy,rakhmanov_suetin}.

\subsection{The class of admissible contours}\label{section_admissible_contours}

There exist $N$ sectors $S_1,\hdots,S_N$ in the complex plane along which 
$\varphi(z) = \re V(z) \to +\infty$ as $z\to \infty$ in $S_j$. These sectors are determined by the leading coefficient
of $V$. If $V(z) = a_0 z^N + \cdots$, $a_0 \neq 0$, then 
\begin{equation}\label{definition_admissible_angles}
	S_j=\left\{z\in\C \mid |\arg z - \theta_j|<\frac{\pi}{2N}\right\}, \qquad \theta_j=-\frac{\arg a_0}{N}+\frac{2\pi(j-1)}{N},
\end{equation}
for $j=1,\hdots,N$.

We say that a set $F\subset \C$ {\it stretches out to infinity} in the sector $S_j$ if 
there exist $\epsilon>0$ and $r_0>0$ such that for every $r>r_0$ there is $z\in F$ with
$$
|z|=r,\qquad |\arg z-\theta_j|<\frac{\pi}{2N}-\epsilon.
$$
Thus in particular we have $\varphi(z) \to +\infty$ if $ z \to \infty$ within $S_j\cap F$.

A partition $\mathcal P$ of $\{1,\hdots,N\}$ is called {\it noncrossing} if it satisfies the following property:
$$
j \stackrel{\mathcal P}{\sim}j' \ \wedge \ k \stackrel{\mathcal P}{\sim} k' \ \wedge j<k<j'<k' \Longrightarrow 
j \stackrel{\mathcal P}{\sim}k.
$$
Here $\stackrel{\mathcal P}{\sim}$ denotes the equivalence relation that is associated with the partition $\mathcal P$.

The term {\it noncrossing} becomes more evident in a graphical representation of a partition, 
see Figure \ref{examples_partitions_crossing_noncrossing}.

% \begin{center}
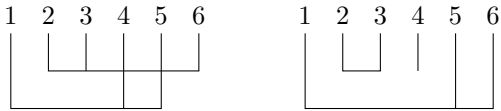
\begin{figure}[h]
\begin{subfigure}{.3\textwidth}
\centering
\begin{tikzpicture}[scale=.5]
 \draw (0,0) -- (0,-2) -- (3,-2) -- (3,0);
 \draw (3,-2) -- (4,-2) -- (4,0);
 \draw (1,0) -- (1,-1) -- (5,-1) -- (5,0);
 \draw (2,0) -- (2,-1);
 \node[above] at (0,0) {1};
 \node[above] at (1,0) {2};
 \node[above] at (2,0) {3};
 \node[above] at (3,0) {4};
 \node[above] at (4,0) {5};
 \node[above] at (5,0) {6};
\end{tikzpicture}
\end{subfigure}
\begin{subfigure}{.3\textwidth}
\centering
\begin{tikzpicture}[scale=.5]
 \draw (0,0) -- (0,-2) -- (5,-2) -- (5,0);
 \draw (4,0) -- (4,-2);
 \draw (1,0) -- (1,-1) -- (2,-1) -- (2,0);
 \draw (3,0) -- (3,-1);
 \node[above] at (0,0) {1};
 \node[above] at (1,0) {2};
 \node[above] at (2,0) {3};
 \node[above] at (3,0) {4};
 \node[above] at (4,0) {5};
 \node[above] at (5,0) {6};
\end{tikzpicture}
\end{subfigure}
\caption{Graphical representation of the crossing partition $\mathcal P = \{\{ 1,4,5 \},\{2,3,6\}\}$ (on the left) 
and of the noncrossing partition
$\mathcal P = \{\{1,5,6\},\{2,3\},\{4\}\}$ (on the right).}\label{examples_partitions_crossing_noncrossing}
\end{figure}
%\end{center}

To any noncrossing partition we associate a collection $\mathcal T$ of admissible contours.

\begin{definition}\label{definition_admissible_sets}
Let $\mathcal P$ be a noncrossing partition of $\{1,\hdots,N\}$ and $\mathcal P_0$ 
the subset of $\mathcal P$ obtained  by removing from $\mathcal P$ all singleton sets. We assume $\mathcal P_0 \neq \emptyset$.
We associate with $\mathcal P$ the collection $\mathcal T(\mathcal P)$ of admissible contours $\Gamma$ defined as follows.
\begin{itemize}
 \item[ i)] Each $\Gamma\in \mathcal T(\mathcal P)$ is a finite union of $C^1$ Jordan arcs.
 
 \item[ ii)] Each $\Gamma\in \mathcal T(\mathcal P)$ has at most $|\mathcal P_0|$ connected components, 
all of them unbounded and stretching out to infinity in at least two of the sectors $S_1,\hdots,S_N$.
 
 \item[ iii)] For each $A\in\mathcal P_0$ there is a connected component $\Gamma_A$ of $\Gamma$ that stretches 
out to infinity in each sector $S_j$ with $j\in A$.

\item[iv)] If $A\in\mathcal P\setminus \mathcal P_0$ then there exists $R>0$ for which
$$
\Gamma\cap \left(S_j\setminus D_R\right)=\emptyset, \quad j\in A.
$$
\end{itemize}

For simplicity, we will mostly write $\mathcal T$ instead of $\mathcal T(\mathcal P)$.
\end{definition}

% \begin{center}
\begin{figure}
  \begin{subfigure}{.3\textwidth}
  \centering
  \begin{overpic}[scale=0.2]{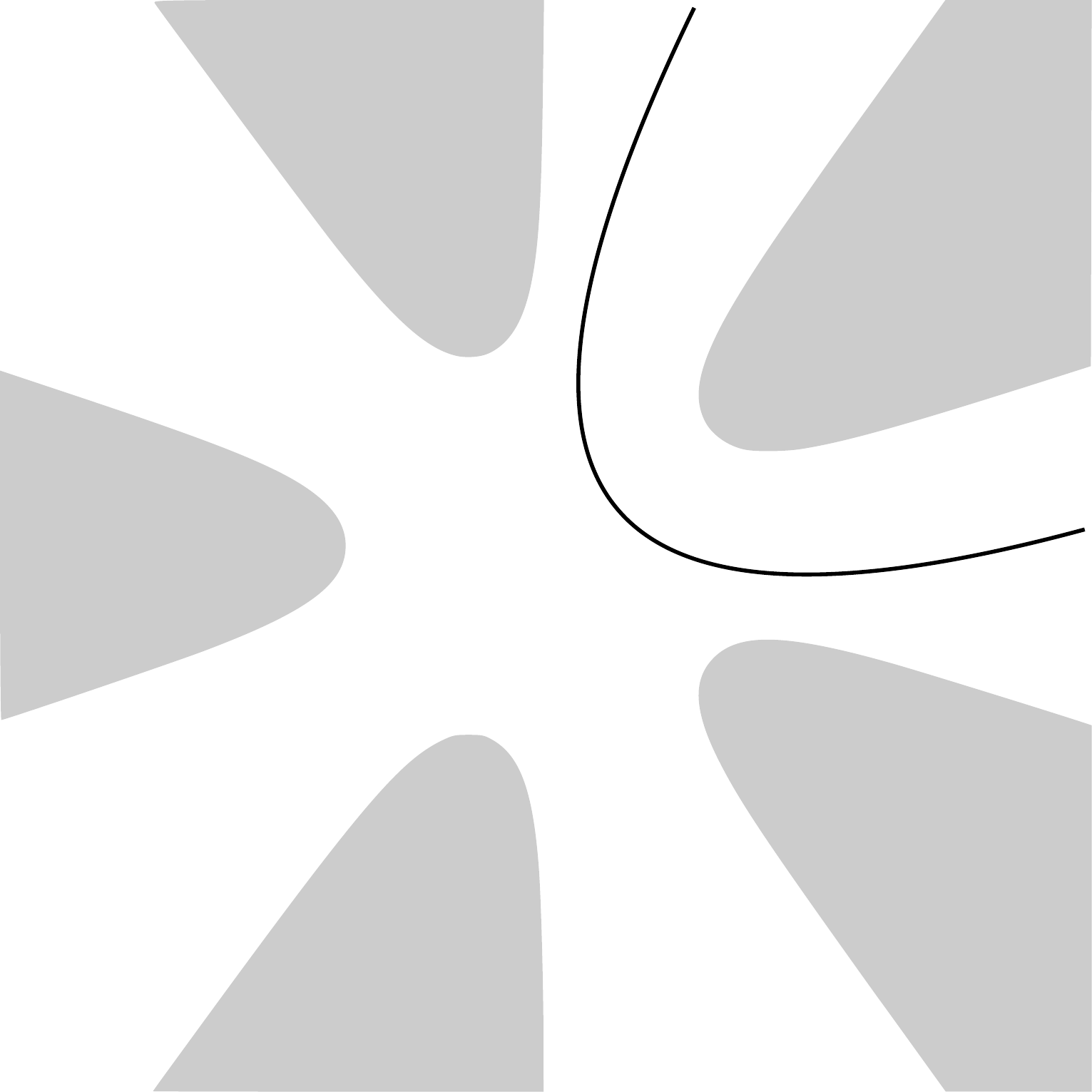}
   \put(80,35){$S_1$}
   \put(60,80){$S_2$}
   \put(5,70){$S_3$}
   \put(5,10){$S_4$}
   \put(55,5){$S_5$}
\end{overpic}
\caption*{$\mathcal P_0=\{ \{1,2\} \}$}
\end{subfigure}%
\begin{subfigure}{.3\textwidth}
   \centering
\includegraphics[scale=0.2]{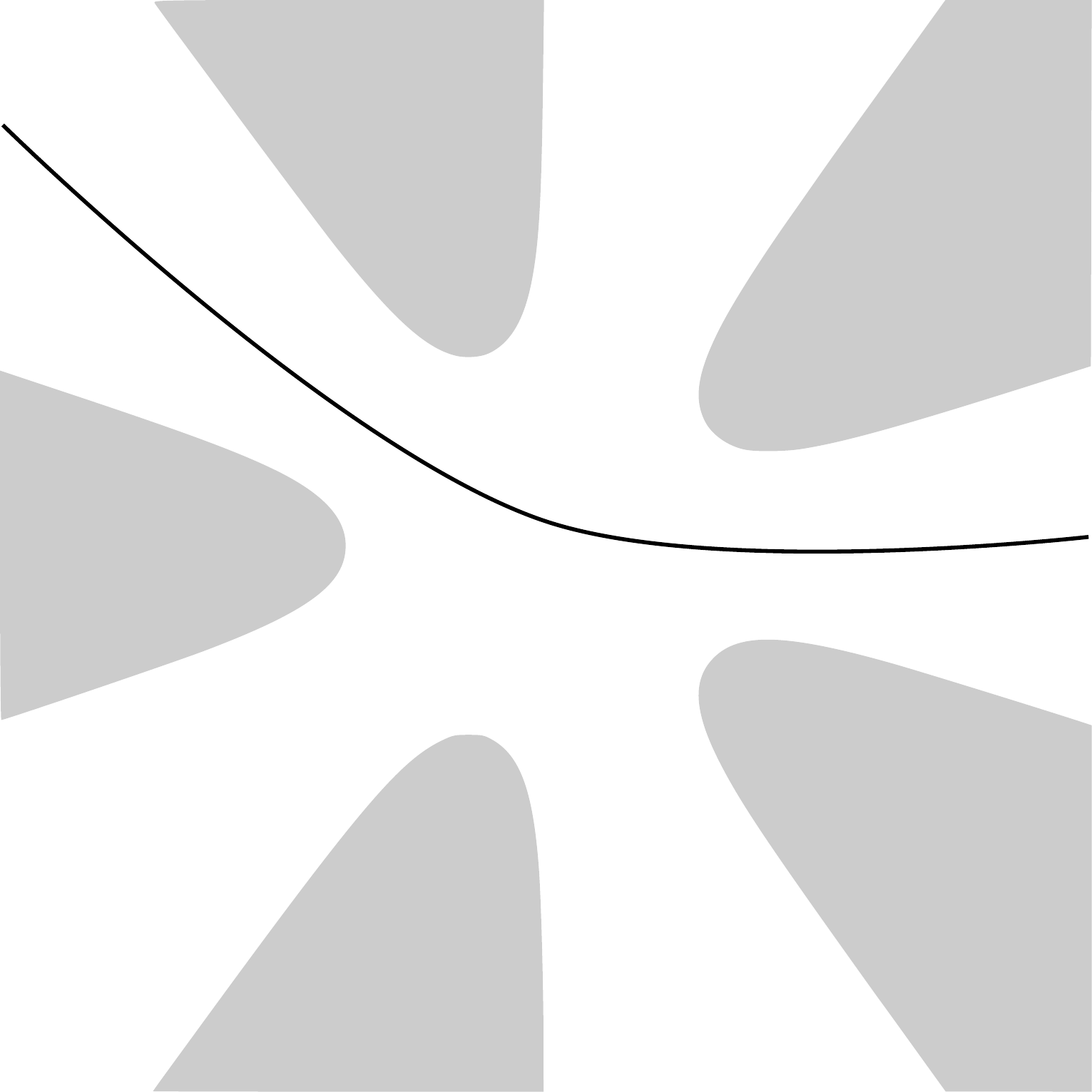}
\caption*{$\mathcal P_0=\{ \{1,3\}\}$}
\end{subfigure}%
\begin{subfigure}{.3\textwidth}
   \centering
\includegraphics[scale=0.2]{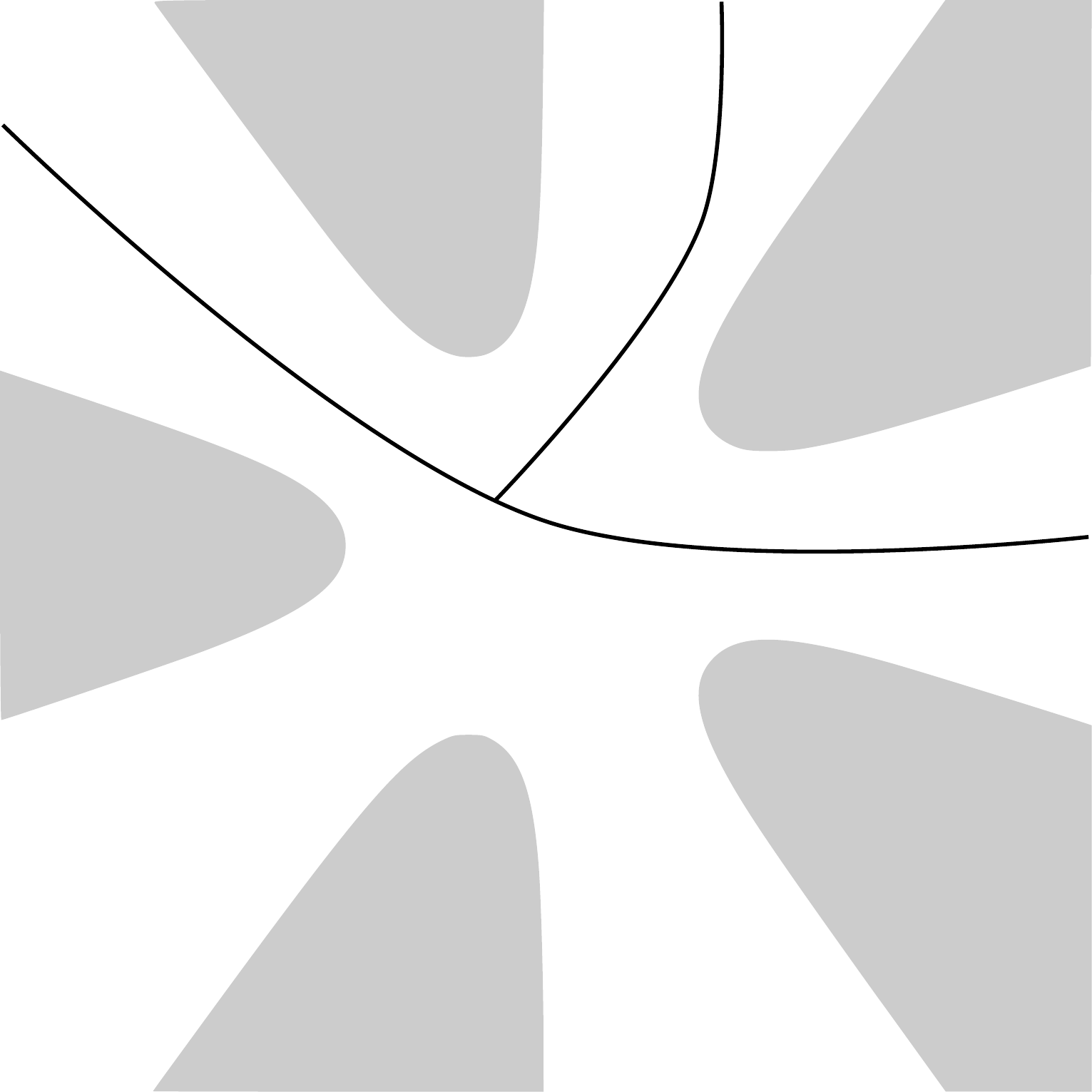}
\caption*{$\mathcal P_0=\{ \{1,2,3\}  \}$}
\end{subfigure}\\
\vspace{0.5cm}
  \begin{subfigure}{.3\textwidth}
   \centering
\includegraphics[scale=0.2]{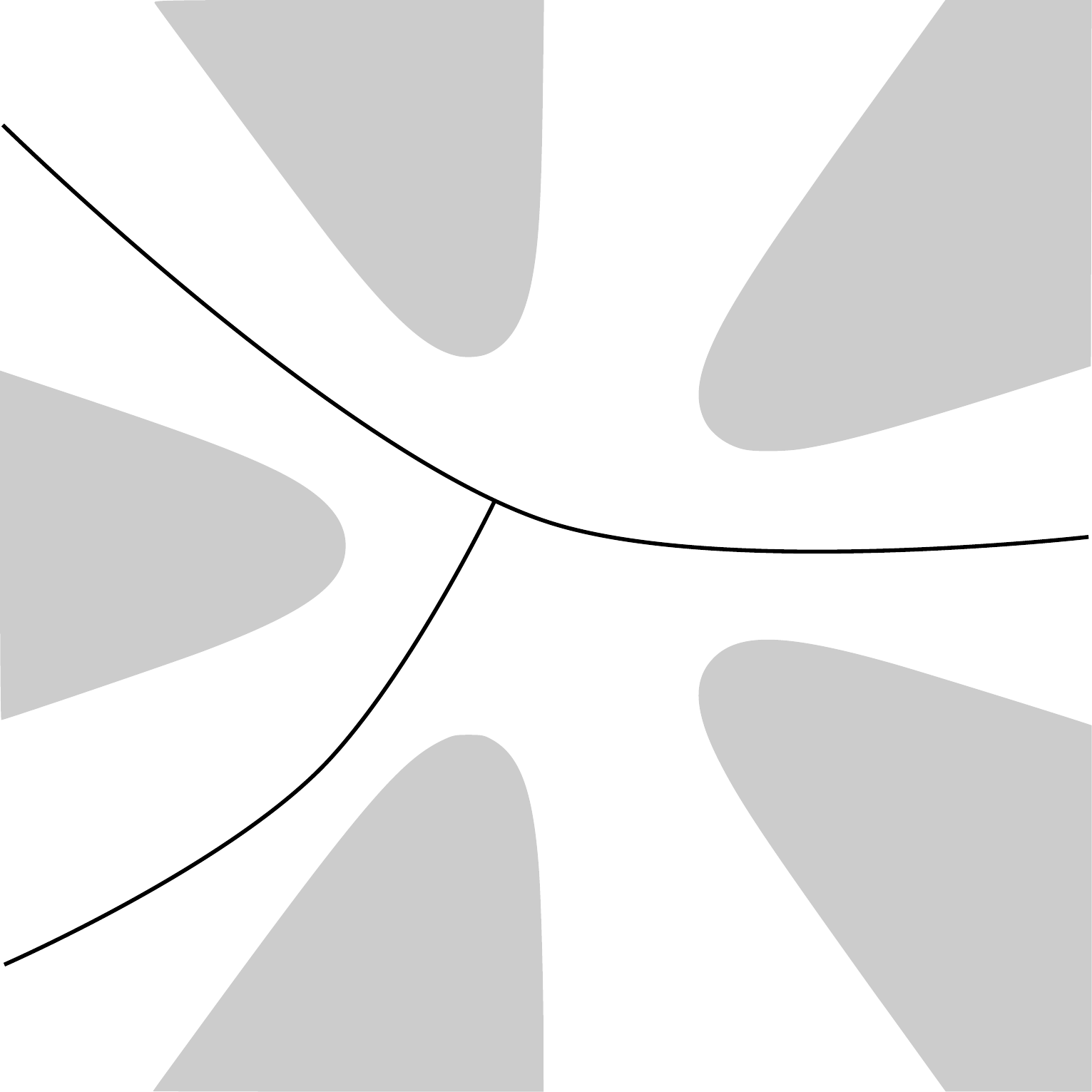}
\caption*{$\mathcal P_0=\{ \{1,3,4\}  \}$}
\end{subfigure}%
\begin{subfigure}{.3\textwidth}
   \centering
\includegraphics[scale=0.2]{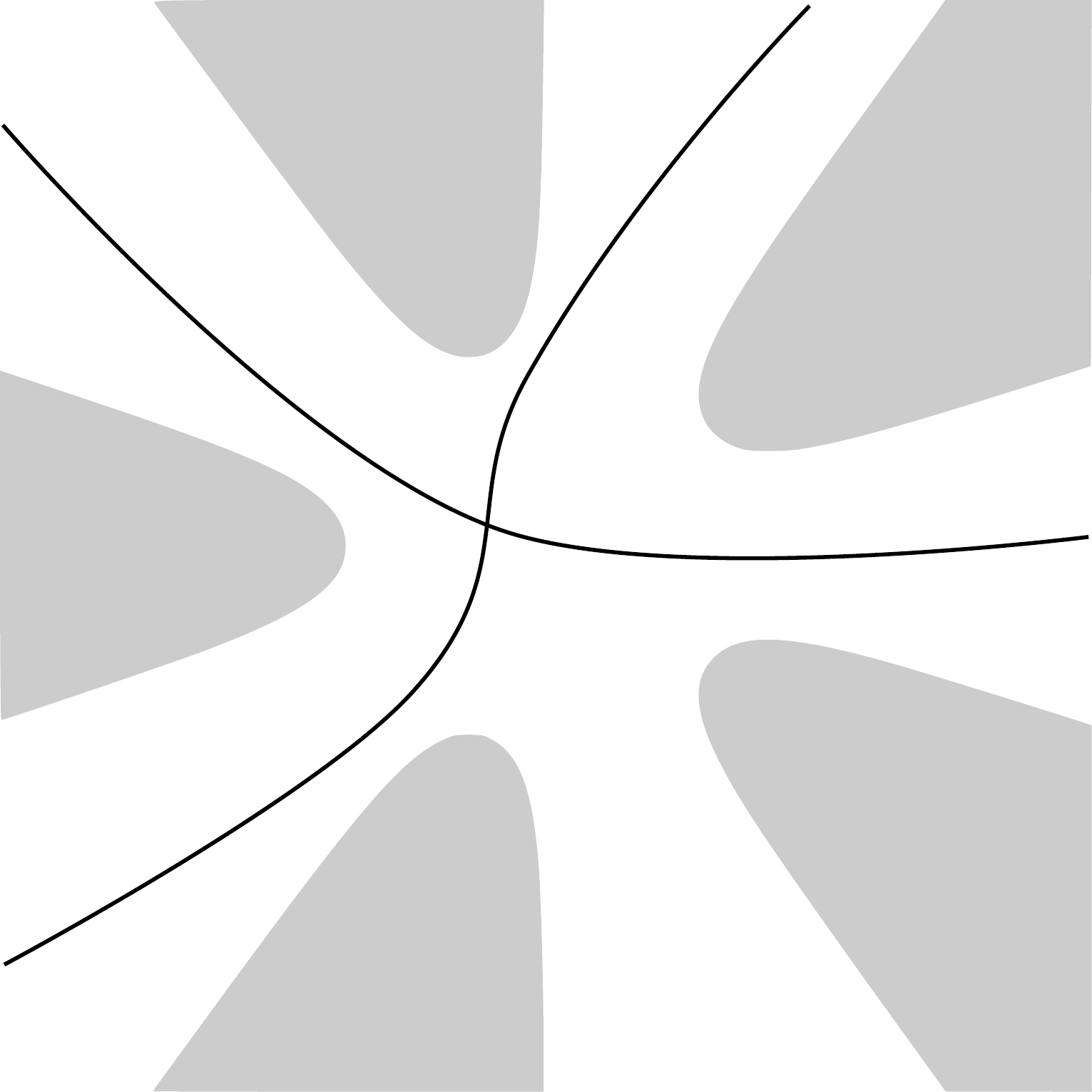}
\caption*{$\mathcal P_0=\{ \{1,2,3,4\}  \}$}
\end{subfigure}%
\begin{subfigure}{.3\textwidth}
   \centering
\includegraphics[scale=0.2]{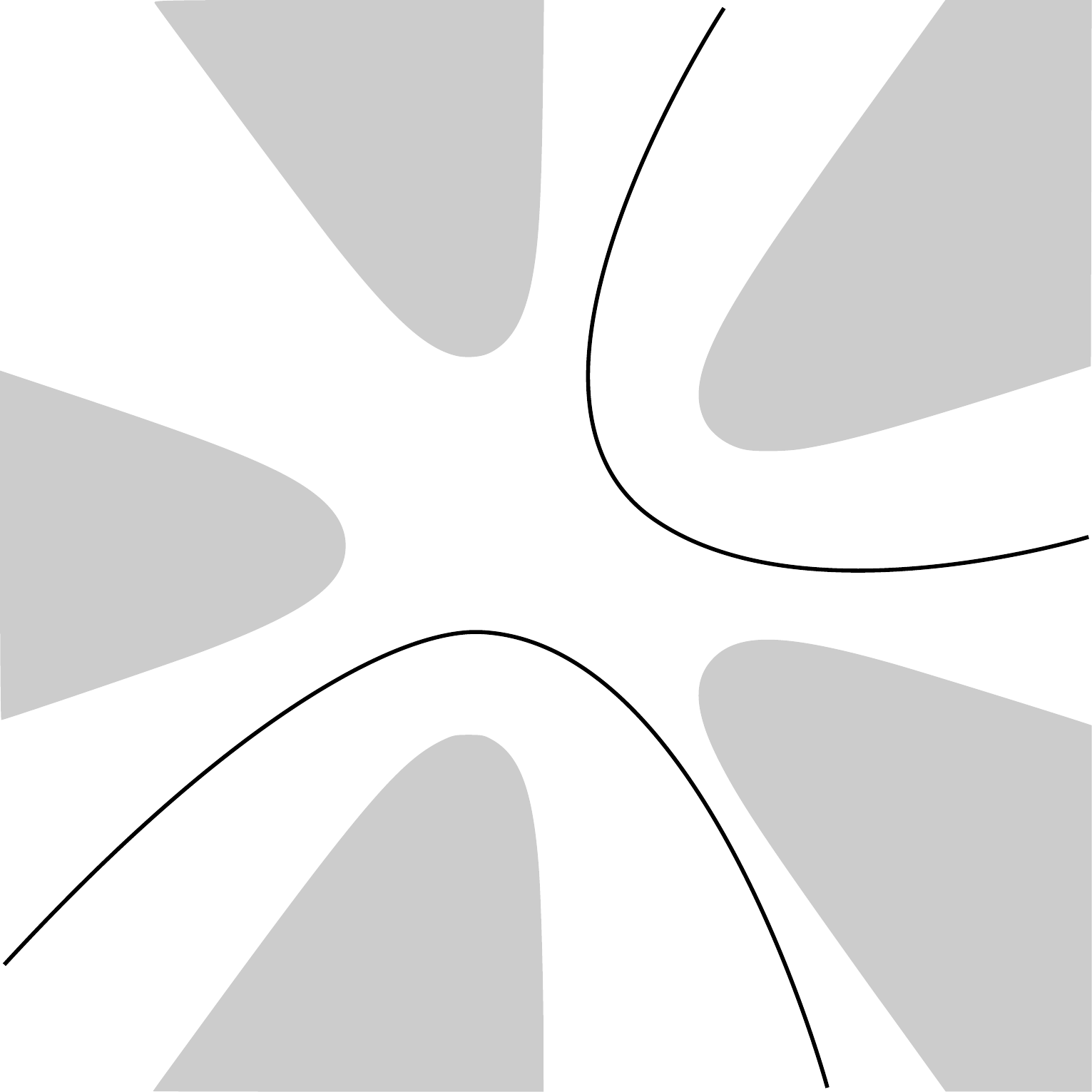}
\caption*{$\mathcal P_0=\{ \{1,2\},\{4,5\}  \}$}
\end{subfigure}\\%
\vspace{0.5cm}
  \begin{subfigure}{.3\textwidth}
   \centering
\includegraphics[scale=0.2]{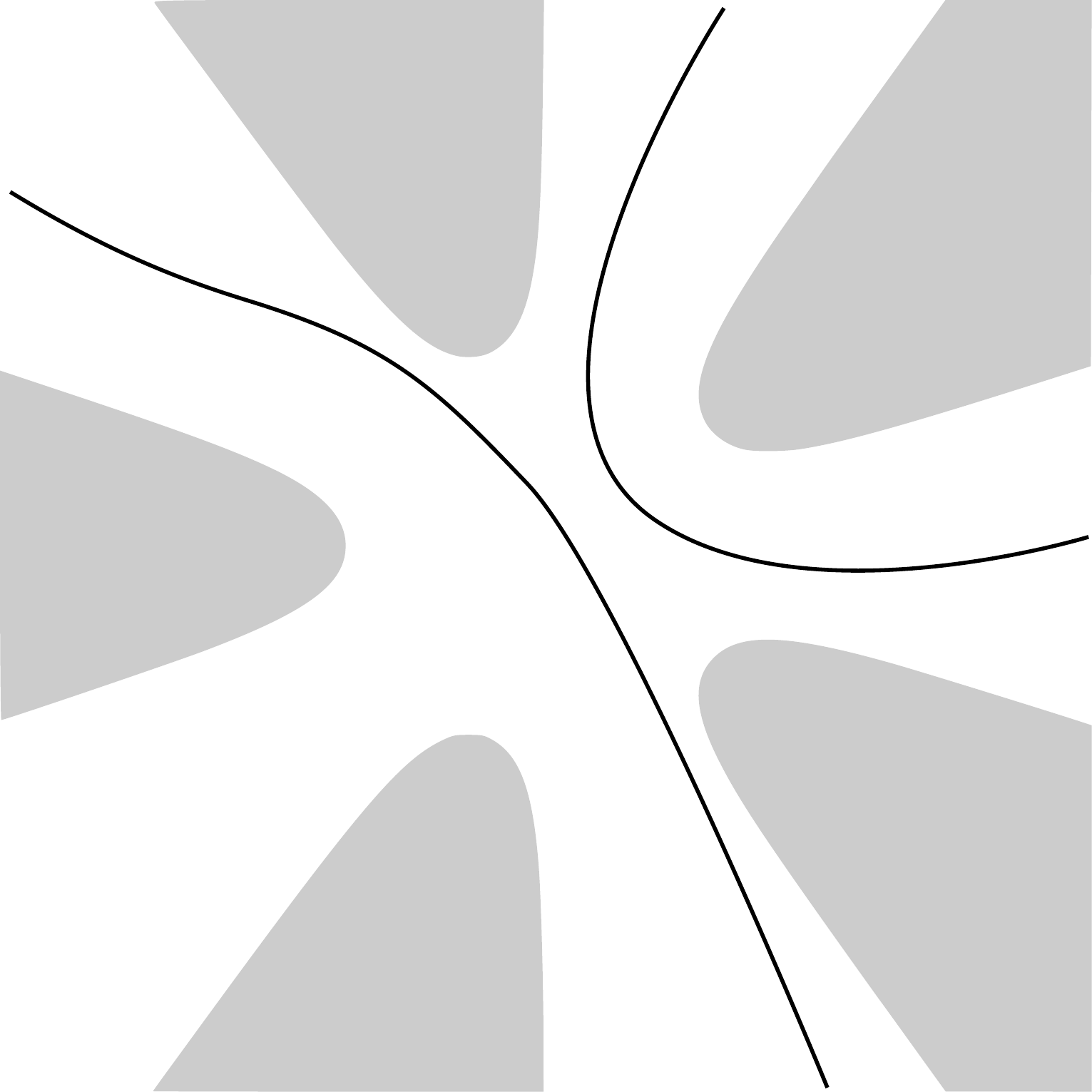}
\caption*{$\mathcal P_0=\{ \{1,2\},\{3,5\}  \}$}
\end{subfigure}%
\begin{subfigure}{.3\textwidth}
   \centering
\includegraphics[scale=0.2]{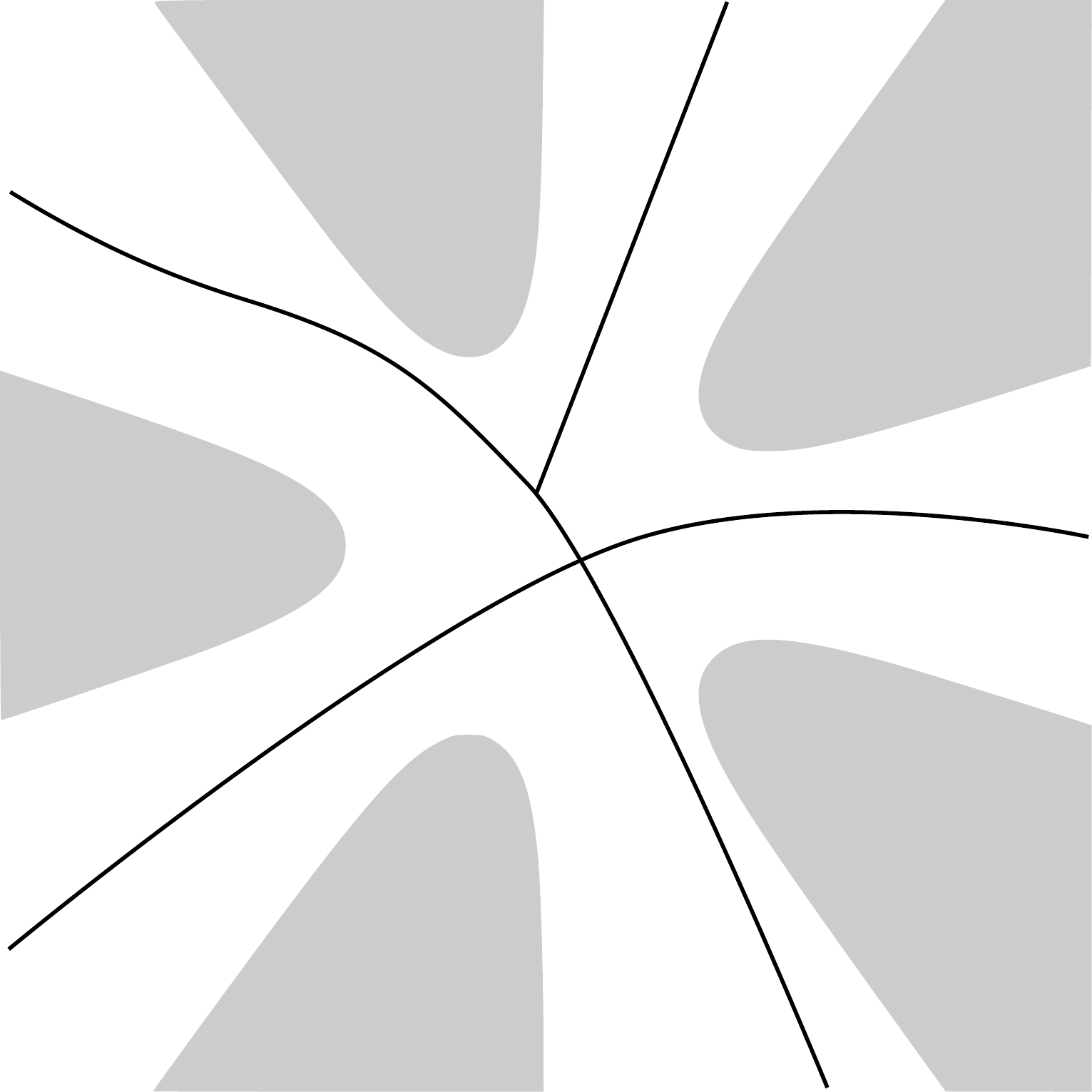}
\caption*{$\mathcal P_0=\{ \{1,2,3,4,5\}  \}$}
\end{subfigure}%
\caption{All possible choices of $\mathcal P$ and representative
contours in
$\mathcal T(\mathcal P)$ for a polynomial $V$ of degree $5$ and positive
leading coefficient. The sectors $S_1,\hdots,S_5$ are indicated on the
top left figure. In gray is represented the region where $\varphi$ is very negative.}
\label{table_configurations}
\end{figure}
%\end{center}

Figure \ref{table_configurations} shows all noncrossing partitions (up to rotation)
for a polynomial of degree $N =5$ and representative contours $\Gamma$ from each 
of the collections $\mathcal T(\mathcal P)$.

The restriction to noncrossing partitions in Definition \ref{definition_admissible_sets} 
is not essential. We could as well have defined the admissible contours for
arbitrary partitions, but then the class of admissible contours obtained for a given 
crossing partition would coincide with the class of admissible contours for some noncrossing
partition. 

\subsection{The main theorem}

For the polynomial $V$ and a fixed noncrossing partition $\mathcal P$ as above, 
the {\it max-min energy problem} for the pair $(V,\mathcal T(\mathcal P))$ asks for finding 
a contour $\Gamma_0 \in \mathcal T$ for which $I^\varphi(\cdot)$, see \eqref{definition_energy_functional},
attains its maximum on $\mathcal T$.
That is, 
\begin{align} I^{\varphi}(\Gamma_0) & = \max_{\Gamma \in \mathcal T} I^{\varphi}(\Gamma)
	= \max_{\Gamma \in \mathcal T} \min_{\mu \in \mathcal M_1^{\varphi}(\Gamma)} I^{\varphi}(\mu).
	\label{max-min-problem}
	\end{align}
	
In what follows we use
\begin{equation}
C^{\mu}(z)=\int\frac{d\mu(x)}{x-z}
\end{equation}
to denote the Cauchy Transform of a  measure $\mu$.
The main result of this paper is the following.

\begin{thm}\label{main_result}
Let $V$ be a polynomial of degree $N\geq 2$ and let $\mathcal P$ be a noncrossing partition with
class $\mathcal T(\mathcal P)$ of admissible contours as in Definition \ref{definition_admissible_sets}.
Then there exists a solution $\Gamma_0 \in \mathcal T(\mathcal P)$ to the max-min energy problem 
\eqref{max-min-problem} for $(V,\mathcal T(\mathcal P))$.

The contour $\Gamma_0$ has the S-property in the external field $\varphi=\re V$.

The equilibrium measure $\mu_0 := \mu^{\varphi,\Gamma_0}$ of $\Gamma_0$ in the external field $\varphi$ is 
supported on a finite union of 
analytic arcs that are critical trajectories of the quadratic differential $-R(z)dz^2$, where
\begin{equation} \label{polynomialR}
R(z)=\left( C^{\mu_0}(z) +\frac{V'(z)}{2}\right)^2,\quad z\in\C\setminus\supp\mu_0,
\end{equation}
is a polynomial of degree $2N-2$.

Furthermore $\mu_0$ is absolutely continuous with respect to arclength and
\begin{equation} \label{densitymu0}
d\mu_0(s)=\frac{1}{\pi i}R_+(s)^{1/2}ds.
\end{equation}
\end{thm}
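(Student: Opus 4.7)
The plan is to follow the max-min variational framework of Rakhmanov, combined with the critical-measure perturbation technique of Martínez-Finkelshtein and Rakhmanov, to extract the quadratic-differential structure of the support.

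\textbf{Existence of a maximizer.} The Euler--Lagrange condition \eqref{variational_condition_1} combined with $\varphi(z)\to+\infty$ in each sector $S_j$ yields a uniform bound $\supp \mu^{\varphi,\Gamma}\subset D_{R_0}$ for all admissible $\Gamma$ with $I^\varphi(\Gamma)$ close to the supremum (the measure must live where $\varphi$ is bounded). Taking a maximizing sequence $\Gamma_n\in\mathcal T$ with equilibrium measures $\mu_n$, weak-$*$ compactness of probability measures on $\overline{D_{R_0}}$ extracts a limit $\mu_0$. Since $I^\varphi(\Gamma)$ depends on $\Gamma$ only through the support of its equilibrium measure, one may append unbounded arcs outside $D_{R_0}$ connecting $\supp\mu_0$ to the sectors prescribed by $\mathcal P$ without affecting the energy. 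The noncrossing hypothesis ensures this gluing is possible, producing an admissible $\Gamma_0\in\mathcal T(\mathcal P)$ realizing the supremum with equilibrium measure $\mu_0$.

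\textbf{Variational identity and polynomiality of $R$.} With $\Gamma_0$ in hand, I would apply smooth localized vector-field perturbations $\Gamma_\epsilon=(\mathrm{id}+\epsilon h)(\Gamma_0)$ for $h:\C\to\C$ supported near a regular point of $\supp\mu_0$, and set $\mu_\epsilon=(\mathrm{id}+\epsilon h)_*\mu_0$. An envelope-type computation gives $\tfrac{d}{d\epsilon}\big|_{\epsilon=0} I^\varphi(\Gamma_\epsilon) = \tfrac{d}{d\epsilon}\big|_{\epsilon=0} I^\varphi(\mu_\epsilon)$, and the maximality of $\Gamma_0$ applied to $\pm h$ forces this derivative to vanish. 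Direct calculation gives the critical identity
\[
\iint \frac{h(x)-h(y)}{x-y}\,d\mu_0(x)d\mu_0(y) = \int V'(z)h(z)\,d\mu_0(z),
\]
valid for smooth complex test fields $h$. A standard manipulation in critical-measure theory shows this is equivalent to $Q_+(s)^2=Q_-(s)^2$ on the smooth part of $\supp\mu_0$, where $Q(z)=C^{\mu_0}(z)+V'(z)/2$. Hence $R:=Q^2$ extends across $\supp\mu_0$ to an entire function, and the asymptotic $R(z)=(V'(z)/2)^2+O(z^{N-2})$ at infinity forces $R$ to be a polynomial of degree $2N-2$, establishing \eqref{polynomialR}.

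\textbf{S-property, density, and critical trajectories.} Since $\mu_0$ charges the arc, one must have $Q_+\neq Q_-$, so $Q_+^2=Q_-^2$ forces $Q_+=-Q_-$ on the smooth part of $\supp\mu_0$. Translating $Q_++Q_-=0$ into the equality of the two normal derivatives of $U^{\mu_0}+\varphi/2$, while using tangential vanishing supplied by \eqref{variational_condition_1}, yields the S-property \eqref{Sproperty}. For the density, Sokhotski--Plemelj gives $C^{\mu_0}_+(s)-C^{\mu_0}_-(s)=-2\pi i\,d\mu_0/ds$, and since $V'/2$ is continuous, $Q_+-Q_-=2Q_+$, so
\[
d\mu_0(s)=\tfrac{1}{\pi i}Q_+(s)\,ds=\tfrac{1}{\pi i}R_+(s)^{1/2}ds,
\]
with the branch of $R_+^{1/2}$ fixed by positivity of the density. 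This positivity is equivalent to $R(z)\,dz^2<0$ along $\supp\mu_0$, that is, $\supp\mu_0$ consists of critical trajectories of $-R(z)\,dz^2$, completing the theorem.

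\textbf{Main obstacle.} The principal technical difficulty is the existence step. The class $\mathcal T(\mathcal P)$ carries no natural compact topology, and $I^\varphi$ is invariant under modifications of $\Gamma$ outside the support of its equilibrium measure, which creates considerable slack in the optimization. Two things must be controlled along a maximizing sequence: first, that the limiting support retains the topology at infinity dictated by $\mathcal P$, with no connected component collapsing and no two components coalescing incompatibly with the partition; second, that the limit can in fact be reassembled into a contour in $\mathcal T(\mathcal P)$. The noncrossing hypothesis is essential here, since it guarantees the existence of a consistent non-intersecting system of connecting arcs at infinity that matches the prescribed partition.
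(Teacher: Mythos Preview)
Your overall architecture---max-min, critical-measure variational identity, quadratic-differential structure---matches the paper's, but there are two genuine gaps that the paper devotes most of its technical work to closing.

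\textbf{Circularity in the polynomiality step.} You write that the critical identity is equivalent to $Q_+^2=Q_-^2$ on the smooth part of $\supp\mu_0$, and conclude that $R=Q^2$ extends to an entire function. But at that point in your argument you do not know $\supp\mu_0$ has a smooth part, or that it is compact, or that it is a union of arcs along which boundary values $Q_\pm$ make sense. The paper avoids this circularity entirely: it proves the algebraic equation $(C^{\mu_0}+V'/2)^2=R$ directly from the integral identity by taking test functions of the special form $h(x)=\prod_{j=1}^{N-1}(x-z_j)^{-1}$, which is admissible after an approximation argument (Lemmas~\ref{lemma_test_function_at_infinity}--\ref{lemma_test_function_singularity}, Corollary~\ref{corollary_test_function}). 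Only \emph{after} $R$ is shown to be a polynomial does the paper conclude that the support consists of critical trajectories and is in particular compact (Propositions~\ref{proposition_density_critical_measure}, \ref{proposition_support_critical_measure}). Your order of deduction is reversed and does not go through as stated.

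\textbf{The existence and gluing step.} You correctly flag this as the main obstacle, but your sketch does not resolve it. Two specific points: (i) Your claim that one can ``append unbounded arcs outside $D_{R_0}$ \ldots\ without affecting the energy'' requires that the appended arcs lie in the set $\{U^{\mu_0}+\varphi/2\geq l_0\}$, so that the variational inequality \eqref{variational_condition_2} still holds and $\mu_0$ remains the equilibrium measure of the new contour; you do not address this. (ii) More seriously, you need to show that for each $A\in\mathcal P_0$ the sectors $S_j$, $j\in A$, can be joined by a connected piece of $\overline{\Lambda}\cup\supp\mu_0$. This is the content of Lemma~\ref{lemma_refinement_partition} in the paper, and its proof is not a formality: it uses the Hausdorff-metric compactness on a restricted class $\mathcal F_M=\overline{\mathcal T_M}$ (contours avoiding a forbidden region $\Delta_M$) to track how the connectivity pattern of an approximating sequence $\Gamma_n\to F_0$ survives in the limit. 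The paper's detour through the set-valued maximizer $F_0\in\mathcal F_M$---rather than a contour---is precisely what makes this tracking possible; your weak-$*$ limit of measures discards that information.

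Finally, the ``envelope-type computation'' you invoke, namely $D_hI^\varphi(F)=D_hI^\varphi(\mu^{\varphi,F})$, is Proposition~\ref{proposition_derivative_energy_compact} and requires a nontrivial argument (Lemma~\ref{lemma_vague_convergence} on vague convergence from vanishing energy); it is not automatic.
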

See Section \ref{section_quadratic_differentials} for a brief discussion on quadratic differentials
and their critical trajectories.

In \eqref{densitymu0} we use a complex line element $ds$ on $\Gamma_0$, which induces an orientation
on $\Gamma_0$. Then  $R_+(s)^{1/2}$ denotes
the limiting value of 
\[ R(z)^{1/2} =  \int\frac{d\mu_0(x)}{x-z} +\frac{V'(z)}{2} \]
 as $z \to s \in \Gamma_0$ from the left-hand side. 

The contour $\Gamma_0$ that is the maximizer for the  max-min energy property in Theorem \ref{main_result} 
is not unique. We can modify the contour $\Gamma_0$ outside the support of $\mu_0$ slightly,
preserving the equilibrium measure in the external field. In this way we obtain a new contour
that also solves the max-min problem for $(V,\mathcal{T}(\mathcal{P}))$.

Although the contour is not unique, the equilibrium measure in the external field $\mu_0$ and its support
turn out to be uniquely determined by $V$ and $\mathcal P$. We do not know of a direct way to prove
this, but there is an indirect way using orthogonal polynomials. Assume for simplicity that
$\mathcal P_0 = \{ \{i,j\} \}$ for some $1 \leq i < j \leq N$. This means that any contour
$\Gamma\in \mathcal T(\mathcal P)$ connects two fixed distinguished sectors at infinity where $\re V(z)\to +\infty$. 

The (non hermitian) bilinear form defined on polynomials $p$ and $q$,
$$
\langle p,q \rangle=\langle p,q\rangle_{V,n} =\int_\Gamma p(z)q(z)e^{-nV(z)}dz
$$
is then well defined and independent of the choice of contours $\Gamma\in\mathcal T(\mathcal P)$. 
A polynomial $p_n$ of degree at most $n$ is {\it orthogonal} with respect to $\langle \cdot,\cdot\rangle$ if
$$
\langle p_n,q\rangle=0,
$$
for any polynomial $q$ of degree $\leq n-1$. Associated to the polynomial $p_n$ is its 
normalized zero counting measure $\chi_n$
$$
\chi_n=\frac{1}{\deg p_n}\sum_{p_n(z)=0}\delta_z.
$$ 
In studying $(\chi_n)$, Gonchar and Rakhmanov proved

\begin{thm}[{\cite[Theorem~3]{gonchar_rakhmanov_rato_rational_approximation}}] \label{gonchar_rakhmanov_theorem_zeros}
If there exists $\Gamma\in \mathcal T(\mathcal P)$ with the S-property in the external field $\varphi=\re V$
and $\C\backslash \supp\mu^{\varphi,\Gamma}$ is connected, then
$$
\chi_n\stackrel{*}{\to}\mu^{\varphi,\Gamma}.
$$
\end{thm}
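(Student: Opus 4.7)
The plan is to show that every weak-$*$ subsequential limit of $(\chi_n)$ equals $\mu := \mu^{\varphi,\Gamma}$; since $(\chi_n)$ will be shown to be tight, the full sequence then converges. To begin, I would show that the zeros of $p_n$ remain asymptotically in a bounded region. The orthogonality $\int_\Gamma p_n(s)\,s^k e^{-nV(s)}\,ds=0$ for $k=0,\hdots,n-1$, combined with the freedom of contour deformation (the integrand is entire) and the rapid decay $|e^{-nV}| = e^{-n\re V}\to 0$ in the sectors $S_j$, traps all but $o(n)$ of the zeros of $p_n$ inside a fixed disk $\overline{D_R}$. By Helly's selection theorem, a subsequence $\chi_{n_k}$ converges weak-$*$ to a probability measure $\chi$ with compact support, and the principle of descent gives $\tfrac{1}{n_k}\log|p_{n_k}(z)| \to -U^\chi(z)$ locally uniformly on $\C\setminus\supp\chi$.

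The core of the proof is to identify $\chi$ with $\mu$ through the Euler-Lagrange conditions. Orthogonality together with Cauchy-Schwarz yields
\[
|h_n|^2 = \left|\int_\Gamma p_n\, q_n\, e^{-nV}\,ds\right|^2 \le \int_\Gamma |p_n|^2 e^{-n\varphi}|ds|\cdot\int_\Gamma |q_n|^2 e^{-n\varphi}|ds|
\]
for every monic polynomial $q_n$ of degree $n$, where $h_n = \int_\Gamma p_n(s)^2 e^{-nV(s)}\,ds$. Choosing $q_n$ with zeros approximating $\mu$ and invoking \eqref{variational_condition_1}--\eqref{variational_condition_2}, the $n$-th root asymptotics of the previous step produce the variational inequality
\[
U^\chi(z) + \tfrac{1}{2}\varphi(z) \ge w, \quad z \in \Gamma,
\]
for a specific constant $w$ tied to $I^\varphi(\mu)$.

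The matching equality on $\supp\chi$ comes from the S-property. The normal-derivative condition \eqref{Sproperty} permits $\Gamma$ to be locally perturbed on either side of $\supp\mu$ while remaining in $\mathcal T(\mathcal P)$; by Cauchy's theorem (the integrand is entire), such perturbations preserve $h_n$. A two-sided deformation argument converts the global inequality into the pointwise equality $U^\chi + \tfrac{1}{2}\varphi = w$ on $\supp\chi\subset\Gamma$. Thus $\chi$ satisfies \eqref{variational_condition_1}--\eqref{variational_condition_2} on $\Gamma$ with $l=w$, and by uniqueness of the equilibrium measure in an external field, $\chi = \mu$. Since every subsequential limit equals $\mu$, $\chi_n\stackrel{*}{\to}\mu$.

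The connectedness hypothesis on $\C\setminus\supp\mu$ rules out zeros accumulating inside a bounded complementary component, where they would be undetected by the $n$-th root limit and could spoil the identification of $\chi$. The main obstacle is the S-property deformation in the last step: one must execute a two-sided local perturbation of $\Gamma$ that stays in $\mathcal T(\mathcal P)$, preserves $h_n$ via Cauchy, and promotes the global ``$\ge$'' inequality on $\Gamma$ to pointwise equality on $\supp\mu$ --- precisely exploiting the ``saddle'' behavior of $U^\mu + \varphi/2$ across $\supp\mu$ encoded in \eqref{Sproperty}.
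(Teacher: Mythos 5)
First, a point of fact: the paper does not prove this statement. It is quoted as Theorem~3 of Gonchar and Rakhmanov \cite{gonchar_rakhmanov_rato_rational_approximation} and is used only as an imported black box (to deduce uniqueness of $\mu_0$), so there is no internal proof to compare yours against. What follows measures your outline against the known Gonchar--Rakhmanov argument.

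Your sketch has the right global shape (zero-counting measures, $n$-th root asymptotics, extremality of $h_n$, the S-property as the device that prevents cancellation, uniqueness of the weighted equilibrium measure), but two steps contain genuine gaps. The main one is the passage from the Cauchy--Schwarz bound to the variational inequality $U^{\chi}+\tfrac12\varphi\ge w$ on $\Gamma$: Cauchy--Schwarz only bounds $|h_n|$ \emph{from above}, and an upper bound on $|h_n|$ carries no information about the limit measure $\chi$ unless it is paired with a lower bound of the same exponential order. In the non-Hermitian setting $h_n=\int_\Gamma p_n^2e^{-nV}\,ds$ can a priori be exponentially smaller than $\int_\Gamma|p_n|^2e^{-n\varphi}\,|ds|$ because of cancellation in the oscillatory integral; ruling this out is precisely the content of the S-property step, so the S-property is needed already here and not only in the later ``equality on the support'' stage. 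Relatedly, your ``two-sided local perturbation preserving $h_n$ via Cauchy's theorem'' is named but never performed: the actual mechanism is a steepest-descent type analysis of $\int_\Gamma p_n^2e^{-nV}\,ds$ built on the function $\re\int^z\bigl(2C^{\mu}+V'\bigr)$, whose saddle structure across $\supp\mu$ --- the equality of exterior normal derivatives in \eqref{Sproperty} --- is what forces $\liminf_n|h_n|^{1/n}$ to match the upper bound. As written, that step asserts the conclusion rather than deriving it.

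A smaller but real gap is the opening claim that all but $o(n)$ of the zeros of $p_n$ stay in a fixed disk. For non-Hermitian orthogonality spurious zeros are a genuine phenomenon, and controlling them requires an argument; moreover the hypothesis that $\C\setminus\supp\mu^{\varphi,\Gamma}$ is connected enters not merely to prevent zeros from hiding in holes, as you suggest, but through the unicity theorem for logarithmic potentials: one identifies $U^{\chi}$ with $U^{\mu}$ up to a constant on the (single) unbounded component of the complement and needs connectedness to conclude $\chi=\mu$. None of these points is fatal to the strategy --- it is essentially the correct strategy --- but each of them is a step that must be supplied, not a routine verification.
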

In other words, if a contour $\Gamma\in \mathcal T(\mathcal P)$ has the S-property, then its equilibrium measure in the
external field is the weak limit of $(\chi_n)$. 

By Theorem \ref{main_result} any contour $\Gamma \in \mathcal T(\mathcal P)$ with 
the max-min property has the S-property. Also $\C\setminus\supp\mu^{\varphi,\Gamma}$ is connected, 
since otherwise we could remove self-intersecting subarcs of $\Gamma$ and increase its energy.
Thus, if $\Gamma_0$, $\Gamma_1$ are two solutions to the max-min problem $(V,\mathcal T(\mathcal P))$,
then by Theorem \ref{gonchar_rakhmanov_theorem_zeros} both equilibrium measures $\mu^{\varphi,\Gamma_0}$ 
and $\mu^{\varphi,\Gamma_1}$ are the weak limit of the sequence $(\chi_n)$. By uniqueness of limits, it follows that
$\mu^{\varphi,\Gamma_0}=\mu^{\varphi,\Gamma_1}$.

The above argument applies if $\mathcal P_0 = \{ \{i,j \}\}$ for some $1 \leq i < j \leq N$. 
For other choices of $\mathcal P_0$, we need to modify the bilinear form $\langle \cdot,\cdot\rangle$ to
get a right family of orthogonal polynomials $(p_n)$. For more details, we refer 
to \cite{bertola_boutroux, bertola_mo_spinor}.

\subsection{Quadratic differentials} \label{section_quadratic_differentials}

A polynomial $R$ of degree $2N-2$, $N\geq 2$, defines on the Riemann sphere $\overline\C$ the 
{\it quadratic differential} $-R(z)dz^2$. A basic reference for quadratic
differentials is the book by Strebel \cite{strebel_book}.

The {\it critical points} of $-R(z)dz^2$ are the zeros of $R$, whose orders as critical points are the same as 
their respective orders as zeros of $R$, and the point $z=\infty$, which is
a pole of order $2N+2$. The other points in $\overline\C$ are regular points of $-R(z)dz^2$.

An arc $\gamma$ is called a {\it horizontal arc} of $-R(z)dz^2$ if, along $\gamma$
$$
-R(z)dz^2>0.
$$
At a regular point, this condition can be locally stated as
$$
\re \int^z \sqrt{R(s)} \, ds \equiv \mathrm{const},\quad z\in \gamma.
$$
A maximal horizontal arc is called a {\it horizontal trajectory}, or shortly {\it trajectory}, of $-R(z)dz^2$, and a trajectory that ends at a zero of $R$ is called a {\it
critical trajectory}. 

Also of interest are the {\it vertical trajectories}, which are defined instead by the requirement
$$
	R(z)dz^2 >0,
$$
which locally at a regular point is equivalent to
$$
\im \int^z \sqrt{R(s)} \, ds \equiv \mathrm{const}.
$$

The local structure of trajectories is well understood. Through any regular point $z_0$ passes 
exactly one horizontal and one vertical trajectory, which are analytic arcs that intersect at $z_0$ only,
where they are orthogonal.

From a zero of $-R(z)dz^2$ of order $p$ emanate exactly $p+2$ trajectories, at equal
consecutive angles $\frac{2\pi}{p+2}$.

At the pole $z=\infty$ of order $2N+2$, the local behavior of trajectories is as 
follows \cite[Theorem~7.4]{strebel_book}. There are $2N$ directions at $\infty$ forming equal
angles, such that any trajectory ending at $\infty$ does so in one of these directions. 
There is a neighborhood $U$ of $\infty$ such that any trajectory entering $U$ ends at
$\infty$ in at least one direction, and trajectories entirely contained in $U$ end at $\infty$ 
in two consecutive directions, see Figure \ref{local_trajectories}, which shows the trajectories
at infinity for a pole of order $10$ after inversion $z \mapsto \frac{1}{z}$.

% \begin{center}
\begin{figure}[t]
\begin{overpic}[scale=.4]{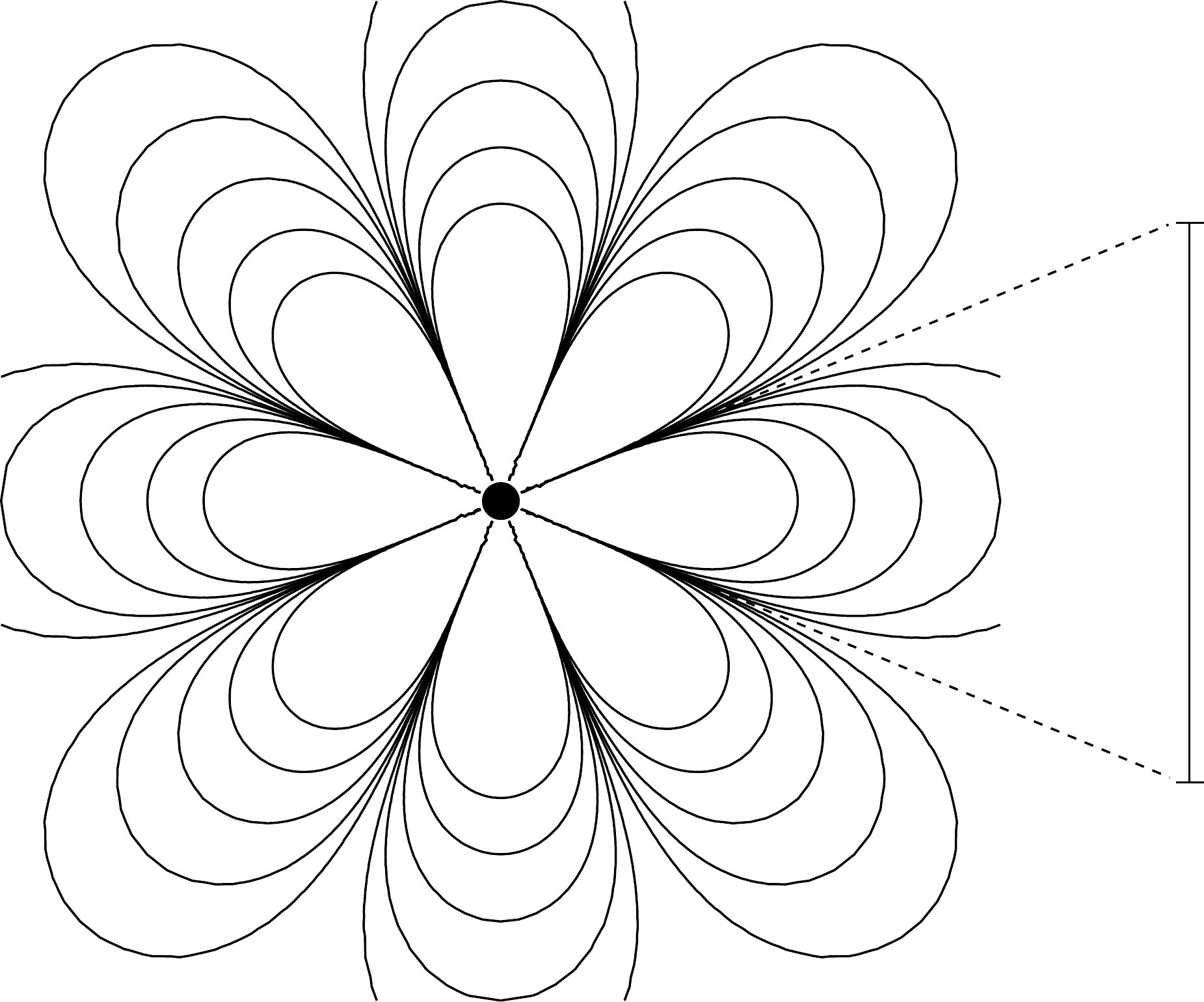}
  \put(220,80){$\theta=\frac{\pi}{4}$}
\end{overpic}
\caption{Trajectories of a quadratic differential near a pole of order 10. The trajectories
end at the pole in eight possible directions.}\label{local_trajectories}
\end{figure}
% \end{center}

\subsection{Rakhmanov's results and overview of the rest of the paper}

As mentioned in the Introduction, our work is very much influenced by the recent work of 
Rakhmanov \cite{rakhmanov_orthogonal_s_curves}, which we now explain.

The first result obtained by Rakhmanov in \cite{rakhmanov_orthogonal_s_curves}, which
is relevant to us is the following.
\begin{thm}[{\cite[Theorem~9.2]{rakhmanov_orthogonal_s_curves}}] \label{rakhmanov_theorem_1}
 Suppose $F$ is closed in $\overline \C$. If
 $$
 -\infty<I^\varphi(F)<+\infty,
 $$
 then the equilibrium measure $\mu^{\varphi,F}$ in external field $\varphi$ exists and is unique.
\end{thm}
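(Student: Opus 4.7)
The proof follows the classical direct method of the calculus of variations, adapted to weighted logarithmic potential theory. I would treat existence and uniqueness separately.

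\textbf{Existence.} Since $I^{\varphi}(F)<+\infty$, the class $\mathcal M_1^{\varphi}(F)$ is non-empty; fix a minimizing sequence $(\mu_n)\subset \mathcal M_1^{\varphi}(F)$ with $I^{\varphi}(\mu_n)\to I^{\varphi}(F)$. Viewing each $\mu_n$ as a Borel probability measure on the compact space $F\subset\overline{\C}$, weak-$*$ compactness produces a subsequence $\mu_{n_k}\stackrel{*}{\to}\mu_\infty\in \mathcal M_1(F)$. The first task is to rule out escape of mass to infinity, i.e.\ to show $\int\log(1+|x|)\,d\mu_\infty(x)<+\infty$ and in particular (if $\infty\in F$) that $\mu_\infty(\{\infty\})=0$. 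For this I would pass to the chordal/spherical formulation and work with the kernel
\[
k(x,y)=\log\frac{\sqrt{(1+|x|^2)(1+|y|^2)}}{|x-y|},
\]
which is nonnegative and lower semicontinuous on $\overline{\C}\times\overline{\C}$, and is linked to the logarithmic energy by
\[
I(\mu)=\iint k(x,y)\,d\mu(x)\,d\mu(y)-2\int \log\sqrt{1+|x|^2}\,d\mu(x).
\]
The two-sided bound $-\infty<I^{\varphi}(F)<+\infty$, combined with the fact that $(\mu_n)$ is minimizing, gives uniform control of $\int\log(1+|x|)\,d\mu_n$ via the preceding identity, which is enough to prevent loss of mass at $\infty$ in the weak limit.

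\textbf{Lower semicontinuity.} With $\mu_\infty$ genuinely supported in $F\cap\C$, I would then establish
\[
I^{\varphi}(\mu_\infty)\leq\liminf_k I^{\varphi}(\mu_{n_k}).
\]
For the pure logarithmic part, write $I$ as the supremum over $M$ of the weakly continuous functionals $\iint \min\{\log\tfrac{1}{|x-y|},M\}\,d\mu(x)\,d\mu(y)$ (permissible since $-\log|x-y|$ is lower semicontinuous on $\C\times\C$), so that $I$ is itself lower semicontinuous. For the external-field part, split $\varphi=\varphi^+-\varphi^-$ and truncate; the uniform tail estimate from the previous step combined with monotone convergence allows passage to the weak-$*$ limit. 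One concludes $I^{\varphi}(\mu_\infty)\leq I^{\varphi}(F)$, which combined with $\mu_\infty\in \mathcal M_1^{\varphi}(F)$ forces equality; so $\mu_\infty$ is an equilibrium measure.

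\textbf{Uniqueness.} If $\mu_1,\mu_2$ are two minimizers, the parallelogram-type identity
\[
I\bigl(\tfrac{\mu_1+\mu_2}{2}\bigr)=\tfrac{1}{2}I(\mu_1)+\tfrac{1}{2}I(\mu_2)-\tfrac{1}{4}I(\mu_1-\mu_2)
\]
together with linearity of $\mu\mapsto \int\varphi\,d\mu$ in $\mu$ yields
\[
I^{\varphi}\bigl(\tfrac{\mu_1+\mu_2}{2}\bigr)=I^{\varphi}(F)-\tfrac{1}{4}I(\mu_1-\mu_2).
\]
The classical positivity of the logarithmic kernel on signed measures of zero total mass with finite energy gives $I(\mu_1-\mu_2)\geq 0$ with equality only for $\mu_1=\mu_2$; since the left-hand side is $\geq I^{\varphi}(F)$ by definition and $(\mu_1+\mu_2)/2\in \mathcal M_1^{\varphi}(F)$, the only possibility is $I(\mu_1-\mu_2)=0$, hence $\mu_1=\mu_2$.

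\textbf{Main obstacle.} The delicate step is closedness of $\mathcal M_1^{\varphi}(F)$ under weak-$*$ limits: preserving mass on $\C$, integrability \eqref{integrability_at_infinity}, finiteness of $I(\mu_\infty)$, and $\varphi\in L^1(\mu_\infty)$, all simultaneously, when $F$ may be unbounded and $\varphi$ may be very negative in some directions. It is exactly the two-sided hypothesis $-\infty<I^{\varphi}(F)<+\infty$ that, threaded through the chordal-potential identity, supplies the uniform tail estimates which make every other step routine.
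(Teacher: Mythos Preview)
The paper does not prove this statement; Theorem~\ref{rakhmanov_theorem_1} is quoted from \cite[Theorem~9.2]{rakhmanov_orthogonal_s_curves} and used as a black box (see the sentence immediately following the theorem, and its invocation in the proof of Proposition~\ref{proposition_derivative_energy_compact}). So there is no in-paper argument to compare your proposal against.

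On its own merits: your uniqueness argument via the parallelogram identity and strict positive-definiteness of the logarithmic energy on signed measures of total mass zero is correct, and is in fact the same device the paper itself deploys later (Lemma~\ref{lemma_vague_convergence} and the chain~\eqref{equacao_auxiliar_6} in the proof of Proposition~\ref{proposition_derivative_energy_compact}).

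The existence outline, however, has a genuine gap at exactly the step you label the ``main obstacle.'' You assert that the two-sided hypothesis $-\infty<I^{\varphi}(F)<+\infty$, fed through the chordal identity, yields a uniform bound on $\int\log(1+|x|)\,d\mu_n$, but you never derive it. Rearranging your identity gives only
\[
\int\log(1+|x|^{2})\,d\mu_n \;=\; \iint k\,d\mu_n\,d\mu_n \;+\; \int\varphi\,d\mu_n \;-\; I^{\varphi}(\mu_n),
\]
and boundedness of the left side does not follow from boundedness of $I^{\varphi}(\mu_n)$ alone: the chordal-energy term is nonnegative but not a~priori bounded above, and $\int\varphi\,d\mu_n$ carries no sign control, since $\varphi=\re V$ may tend to $+\infty$ along some unbounded parts of $F$ and to $-\infty$ along others. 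The point of Rakhmanov's theorem---emphasized by the paper in the paragraph right after the statement---is precisely that no growth hypothesis on $\varphi$ is imposed, so the standard admissibility arguments of \cite{Saff_book} are unavailable. Your sketch correctly locates the difficulty but does not supply the mechanism by which the lower bound $I^{\varphi}(F)>-\infty$ is converted into the required tail estimate; that mechanism is the actual content of the theorem, and it is absent here.
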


In contrast with previous results, Theorem \ref{rakhmanov_theorem_1} does not contain
any assumption on the growth of $\varphi$ at $\infty$. Actually the theorem in \cite{rakhmanov_orthogonal_s_curves}
is more general as it also deals with situations where the external field is continuous 
except at a finite number of points, and no growth restriction near these singularities.

Rakhmanov considers the energy functional
$$
F\mapsto I^\varphi(F)
$$
defined on a metric space of compacts, which we explain next.

The Riemann sphere $\overline \C$ is mapped to the sphere $\mathcal S\subset \R^3$ centered at $(0,0,1/2)$ 
with radius $\frac{1}{2}$,
$$
\mathcal S=\left\{ (x_1,x_2,x_3)\in\mathcal \R^3 \mid x_1^2+x_2^2+(x_3-\tfrac{1}{2})^2=\tfrac{1}{4} \right\},
$$
via the inverse stereographic map $L:\overline \C\to \mathcal S$ given by $L(\infty)=(0,0,1)$ and
$$
L(z)=\left(\frac{\re z}{1+|z|^2},\frac{\im z}{1+|z|^2},\frac{|z|^2}{1+|z|^2}\right), \qquad z\in \C.
$$

The {\it hyperbolic metric} on $\overline \C$ is then given by
$$
\distp(z,w)=\|L(z)-L(w)\|,
$$
where the norm $\|\cdot\|$ on the right-hand side above is the Euclidean norm in $\mathcal \R^3$. 
The following relation holds
$$
\distp(z,w)=\frac{|z-w|}{\sqrt{1+|z|^2}\sqrt{1+|w|^2}}, \quad z,w\in\C.
$$

This metric induces a metric on compact subsets of $\overline \C$. If $K_1,K_2\subset \overline \C$ 
are compacts, the {\it Hausdorff metric} between them is given by the formula
\begin{equation} \label{Hausdorffmetric}
\disth(K_1,K_2)=\max\left\{ \sup_{x\in K_1}\dist(x,K_2), \sup_{y\in K_2}\dist(y,K_1)  \right\},
\end{equation}
where the distance between points and sets above is the usual one induced by $\distp$,
$$
\dist(x,K)=\inf_{y\in K}\distp(x,y).
$$
Alternatively, denoting
$$
(K)_\delta=\{x\in \overline{\C} \mid \ \dist(x,K)<\delta\},
$$
it can be equivalently defined by
$$
\disth(K_1,K_2)=\inf \left\{ \delta>0 \mid K_1\subset (K_2)_\delta, \ K_2\subset (K_1)_\delta
\right\}.
$$

The Hausdorff metric is thus defined on closed sets of $\overline\C$. Naturally it induces a 
metric on closed sets of $\C$ by adding the point at infinity to unbounded closed sets. 
In what follows, when we talk about the Hausdorff metric on closed 
subsets of $\C$, we mean the metric obtained with this natural identification to closed
subsets of $\overline\C$.

We equip $\mathcal T$ with the Hausdorff metric \eqref{Hausdorffmetric}
induced by the hyperbolic distance. So whenever we refer to a topological notion 
on $\mathcal T$ such as convergence or continuity, we will always mean with respect
to the Hausdorff metric just introduced.

Suppose $\mathcal F$ is a class of closed subsets of $\C$ for which there exists 
an absolute constant $c$ such that every compact set in $\mathcal F$ has at most $c$ connected
components. Equip $\mathcal F$ with the Hausdorff metric just introduced. 
For such classes $\mathcal F$, Rakhmanov proves
the following.

\begin{thm}[{\cite[Theorem~3.2]{rakhmanov_orthogonal_s_curves}}]\label{theorem_upper_semicontinuity}
The energy functional $I^\varphi:\mathcal F\to [-\infty,+\infty]$ is upper semicontinuous.
\end{thm}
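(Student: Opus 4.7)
The plan is to prove upper semicontinuity of $I^\varphi$ at each $F \in \mathcal F$ by constructing, along any sequence $F_n \to F$ in the Hausdorff metric, competitor measures $\mu_n \in \mathcal M_1^\varphi(F_n)$ satisfying $\limsup_n I^\varphi(\mu_n) \le I^\varphi(\mu)$ for any prescribed test measure $\mu \in \mathcal M_1^\varphi(F)$. Once this is done, $\limsup_n I^\varphi(F_n) \le I^\varphi(\mu)$ for every such $\mu$, and taking the infimum over $\mu$ on the right-hand side yields the desired bound $\limsup_n I^\varphi(F_n) \le I^\varphi(F)$. We may assume $I^\varphi(F) < +\infty$ and restrict to test measures $\mu$ with $I^\varphi(\mu) < +\infty$; otherwise the conclusion is vacuous.

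As a preliminary reduction I would truncate $\mu$ to $\mu_R := \mu|_{D_R}/\mu(D_R)$. The integrability conditions $I(\mu) < \infty$, \eqref{integrability_at_infinity}, and $\varphi \in L^1(\mu)$ together with a routine dominated-convergence argument give $I^\varphi(\mu_R) \to I^\varphi(\mu)$ as $R \to \infty$, so it suffices to approximate compactly supported measures on $F$ by measures on $F_n$.

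The construction of $\mu_n$ would then proceed by a discrete transport procedure. Fix a small scale $\delta > 0$ and partition $\supp\mu$ into finitely many Borel pieces $A_{1},\dots,A_{k}$ of diameter at most $\delta$ and positive masses $p_j = \mu(A_j)$. Because $F_n$ converges to $F$ in the Hausdorff metric, for $n$ large each $A_j$ lies within distance $\delta$ of $F_n$, and the global bound on the number of connected components of elements of $\mathcal F$ prevents pathological fragmentation, so that near each $A_j$ one can select a nondegenerate continuum $B_{n,j} \subset F_n$ of positive capacity. One then sets
\[
\mu_n := \sum_j p_j\, \sigma_{n,j},
\]
where $\sigma_{n,j}$ is the Robin measure of $B_{n,j}$. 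Smearing each point mass onto such a $\sigma_{n,j}$ is essential: a naive pushforward $\sum_j p_j \delta_{z_{n,j}}$ would be purely atomic and hence have infinite logarithmic energy.

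It remains to show $\limsup_n I^\varphi(\mu_n) \le I^\varphi(\mu)$ by choosing $\delta = \delta_n \to 0$ compatibly with $n \to \infty$. Split $I(\mu_n)$ into off-diagonal terms $\sum_{i \ne j} p_i p_j \iint \log\frac{1}{|x-y|}\, d\sigma_{n,i}(x)\, d\sigma_{n,j}(y)$ and diagonal self-energies $\sum_j p_j^2\, I(\sigma_{n,j}) = \sum_j p_j^2 \log\frac{1}{\capac B_{n,j}}$. The off-diagonal part converges to $I(\mu)$ by continuity of the logarithmic kernel off the diagonal, and $\int \varphi\, d\mu_n \to \int \varphi\, d\mu$ by uniform continuity of $\varphi$ on compact sets. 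The principal obstacle is controlling the diagonal self-energies: one must guarantee a quantitative lower bound on $\capac B_{n,j}$ and simultaneously exploit nonatomicity of $\mu$ (which is forced by $I(\mu) < \infty$) to ensure $\max_j p_j \to 0$. This is where the hypothesis on $\mathcal F$ enters decisively, since bounding the number of components of $F_n$ prevents fragmentation into arbitrarily many tiny subcontinua at scale $\delta_n$ and thus controls how large the Robin constants $-\log \capac B_{n,j}$ can become.
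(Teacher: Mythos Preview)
The paper does not give its own proof of this statement: it is quoted from Rakhmanov~\cite[Theorem~3.2]{rakhmanov_orthogonal_s_curves} and invoked as a black box in the proof of Corollary~\ref{corollary_maximizing_set}. There is therefore no in-paper argument to compare your proposal against.

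On the merits of your sketch, the strategy is the standard one and is essentially what Rakhmanov carries out. The reduction to compactly supported $\mu$, the discretization, the smearing onto Robin measures of subcontinua, and the off-diagonal and external-field convergence are all routine. Where you stop short is exactly where you say you do: the diagonal term $\sum_j p_j^2\,I(\sigma_{n,j})$. Two ingredients close that gap. First, the bound $\capac K \ge \tfrac14 \diam K$ for a continuum $K$ gives $I(\sigma_{n,j}) \le \log\bigl(4/\diam B_{n,j}\bigr)$, so one only needs $\diam B_{n,j} \gtrsim \delta$; this is where the component bound is actually used, since it guarantees that some component of $F_n$ stretches across a $\delta$-neighbourhood of $A_j$ rather than degenerating to a point. (In the paper's application to $\mathcal F_M$ all components are unbounded, so this step is automatic.) Second, for $\supp\mu \subset D_R$ and $\diam A_j \le \delta$ one has
\[
\sum_j p_j^2 \,\log\frac{2R}{\delta}
\;\le\; \sum_j \iint_{A_j \times A_j}\log\frac{2R}{|x-y|}\,d\mu(x)\,d\mu(y)
\;\le\; \iint_{|x-y|\le \delta}\log\frac{2R}{|x-y|}\,d\mu(x)\,d\mu(y),
\]
and the right-hand side tends to $0$ as $\delta\to 0$ by dominated convergence, since $I(\mu)<\infty$ forces $\mu$ to be nonatomic and hence the diagonal to have zero $\mu\times\mu$ mass. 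Combining the two gives $\sum_j p_j^2\, I(\sigma_{n,j}) \to 0$, which is the missing piece of your outline. Your remark that the component bound ``controls how large the Robin constants can become'' is pointing at the first ingredient but is not yet the argument; filling in these two estimates would make the sketch a proof.
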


If the class $\mathcal F$ is closed, then it follows from \eqref{theorem_upper_semicontinuity}
there exists a set $F_0\in \mathcal F$ maximizing $I^\varphi$. 
However, it is easy to see that the class of admissible contours $\mathcal T = \mathcal T(\mathcal P)$ 
introduced in Definition \ref{definition_admissible_sets} is not closed in the Hausdorff metric. 
An easy way to see this is through space-filling curves. One can approximate sets of large
planar measure with smooth contours, and the limiting set is not a smooth contour anymore.

A natural idea would then be to work with the closure $\overline{\mathcal T}$ of the class $\mathcal T$
in the Hausdorff metric. There is however an issue with that, since sets in 
$\overline{\mathcal T}$ may stretch out to infinity in unlikely directions, and we might then recover the S-curve for
the admissible class $\mathcal T(\mathcal P')$ corresponding to another choice of partition $\mathcal P'$.

Our technique to overcome this last issue is to introduce a suitable subclass $\mathcal T_M \subset \mathcal T$, 
see \eqref{definition_subclass_cut_off}, consisting of contours that do not enter a ``forbidden region'' $\Delta_M$.
Then we take the closure
$\mathcal F_M=\overline{\mathcal T}_M$, see also \eqref{definition_FM}. 
For the closed class $\mathcal F_M$, Theorem \ref{theorem_upper_semicontinuity} assures us that there
exists a set $F_0$ maximizing the energy functional $I^\varphi(\cdot)$ on $\mathcal F_M$. 
Moreover, with $M$ chosen large enough,  we can guarantee that the set $F_0$ does not come to the boundary of
the forbidden region $\Delta_M$, which implies that small deformations of $F_0$ in any direction also belong to $\mathcal F_M$.

Any $C^2_c$ function $h:\C \to \C$ generates a one-parameter
deformation $F_0^t$ of this maximizer set by
\begin{equation}
F_0^t=\{z+th(z) \mid z\in F \}, \qquad t\in\R.
\end{equation}
Now, since $F_0^t$ also belongs to $\mathcal F$ for $t$ sufficiently close to $0$, 
we have that (the limit actually exists)
$$
\lim_{t\to 0}\frac{I^\varphi(F_0^t)-I^\varphi(F_0)}{t} = 0.
$$
because $F_0$ is a maximizer set. 
Rakhmanov \cite{rakhmanov_orthogonal_s_curves} then proves that this limit implies 
that the equilibrium measure $\mu^{\varphi, F_0}$ in the external field $\varphi$ is critical, in the sense
introduced by Mart\'inez-Finkelshtein and Rakhmanov \cite{martinez_rakhmanov}, see also 
Section \ref{critical_measures} below. 

As it was already observed by Mart\'inez-Finkelshtein and Rakhmanov \cite{martinez_rakhmanov}, 
the Cauchy transform of a critical measure is an algebraic function. In the present
context, this means that the Cauchy transform $C^{\mu_0}$ of the equilibrium measure in external field
satisfies an algebraic equation of the form 
$$
\left(C^{\mu_0}(z)+\frac{V'(z)}{2}\right)^2=R(z), \qquad \text{for a.e. } z \in \mathbb C,
$$
where $R$ is a polynomial of degree $2n-2$, see Proposition \ref{proposition_algebraic_equation_critical_measure} below. 
The main technical difference between \cite{martinez_rakhmanov} and the present work is that in \cite{martinez_rakhmanov} 
this is obtained under the assumption that the critical measure  is compactly
supported, while here we obtain the algebraic equation first, and as one of its consequences we find 
that $\mu_0$ must be compactly supported. As in \cite{martinez_rakhmanov}, the
algebraic equation also implies that the measure $\mu_0$ is supported on a finite union of analytic arcs and 
its density can be recovered from the polynomial $R$, see Propositions
\ref{proposition_density_critical_measure}, \ref{proposition_support_critical_measure}. 
Also, the S-property ultimately follows from the algebraic
equation, see Corollary \ref{corollary_s_property}.

As a final step, once some properties of the equilibrium measure in external field $\mu_0$  of the maximizer 
set $F_0$ are known,  we drop the parts of $F_0$ that do not belong to $\supp\mu_0$, and
we give in Section \ref{proof_main_result} a construction of a curve $\Gamma_0\in \mathcal T(\mathcal P)$ 
whose equilibrium measure in the external field $\varphi$ 
is the desired measure $\mu_0$.

The rest of the paper is organized as follows. In Section \ref{critical_measures} we discuss critical measures. 
The results therein presented are similar to the ones  obtained for the first time by
Mart\'inez-Finkelshtein and Rakhmanov \cite{martinez_rakhmanov}. However, as mentioned before and further explained later, 
the setup in \cite{martinez_rakhmanov} is not the same as ours, so we preferred
to present detailed proofs in Section \ref{critical_measures}. 
In Section \ref{critical_measures_critical_sets} we explain the link between critical measures and
critical sets, following the lines of \cite{rakhmanov_orthogonal_s_curves}. 
Section \ref{proof_main_theorem} is devoted to the proof of Theorem~\ref{main_result}.

%%%%%%%%%%%%%%%%%%%%%%%%%%%%%%%%%%%%%%%%%%%%%%%%%%%%%%%%%%%%%%%%%%%%%%
\section{Critical Measures}\label{critical_measures}
%%%%%%%%%%%%%%%%%%%%%%%%%%%%%%%%%%%%%%%%%%%%%%%%%%%%%%%%%%%%%%%%%%%%%%

As before, $V$ is a polynomial of degree $N \geq 2$ and the external field 
$\varphi$ is given by
\begin{equation}\label{external_field_real_part_polynomial}
	\varphi(z)=\re V(z),\qquad z\in\C.
\end{equation}
With proper modifications, all the results and proofs in this section are also 
valid for external fields given by the real part of an analytic function, possibly
multivalued, with rational (and so single-valued) derivative, but for our purposes 
it is enough to consider the polynomial case. 

The results we are going to show here were proven before by 
Mart\'inez-Finkelshtein and Rakhmanov \cite{martinez_rakhmanov} 
under the assumption that the measures involved are a priori compactly
supported and the external field has a rational derivative with simple poles. The techniques 
we are using are also similar, but sometimes simplified and adapted to our
needs. The main difference here is that we do not impose a priori that a critical measure must 
have compact support. Ultimately, we do find that critical measures must
be compactly supported (for external fields as in \eqref{external_field_real_part_polynomial}), 
and then the results here are the same as the ones obtained in \cite{martinez_rakhmanov}.

\subsection{Derivative of the energy functional}

Consider the set of test functions $C^2_c$, consisting of $C^2$ complex-valued functions in $\C$ with compact support.
For a function $h\in C_c^2$ and $t\in \R$ denote
\begin{equation}\label{h_variation}
h_t(z)=z+th(z),\quad z\in \C.
\end{equation}
Any function $h \in C^2_c$ generates local variations of sets $F\mapsto F^t=h_t(F)$ and also 
of measures $\mu\mapsto \mu^t$, where $\mu^t$ is the pushforward measure induced by $h_t$. If
$\mu$ has finite weighted logarithmic energy then the same is true for $\mu^t$.

For a measure $\mu\in M_1^\varphi(\C)$ and $h\in C_c^2$, we define the {\it derivative} of the weighted energy functional at $\mu$ in the
direction of $h$ by
\begin{equation} \label{equation_derivative_energy_measure}
D_hI^\varphi(\mu)=\lim_{t\to 0} \frac{I^\varphi(\mu^t) - I^\varphi(\mu)}{t},
\end{equation}
whenever the limit exists.
A first result is that $D_hI^\varphi(\mu)$ indeed always exists.

\begin{prop}\label{proposition_formula_derivative} 
Given $\mu \in \mathcal{M}_1^\varphi(\C )$ and $h\in C_c^2$, the derivative $D_hI^\varphi(\mu)$ exists and is given by
\begin{equation}\label{explicit_formula_derivative}
 D_hI^\varphi(\mu)=-\re\left( \iint \frac{h(x)-h(y)}{x-y} \, d\mu(x)d\mu(y) - \int V'(x)h(x) \, d\mu(x) \right).
\end{equation}
\end{prop}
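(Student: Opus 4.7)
The plan is to exploit the pushforward characterization $\mu^t = (h_t)_*\mu$ to rewrite $I^\varphi(\mu^t)$ as an integral with respect to the fixed measure $\mu$, after which a first-order Taylor expansion in $t$ of the two pieces (logarithmic and external field) yields the formula.

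First I would write
\begin{equation*}
I^\varphi(\mu^t)=\iint\log\frac{1}{|h_t(x)-h_t(y)|}\,d\mu(x)d\mu(y)+\int \re V(h_t(x))\,d\mu(x),
\end{equation*}
and factor $h_t(x)-h_t(y)=(x-y)\bigl(1+tw(x,y)\bigr)$, where
\begin{equation*}
w(x,y)=\frac{h(x)-h(y)}{x-y}.
\end{equation*}
Since $h\in C^2_c(\C)$ is Lipschitz, $w$ is bounded on $\C\times\C$ by $\|\nabla h\|_\infty$ (using the straight-line segment to estimate $|h(x)-h(y)|\leq \|\nabla h\|_\infty|x-y|$), so splitting the log gives
\begin{equation*}
\iint\log\frac{1}{|h_t(x)-h_t(y)|}d\mu(x)d\mu(y)=I(\mu)-\iint\log|1+tw(x,y)|\,d\mu(x)d\mu(y),
\end{equation*}
and in particular the left-hand side is finite whenever $I(\mu)$ is.

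Next I would Taylor expand. For $|t|\leq t_0$ small enough that $|tw|\leq 1/2$ uniformly, one has $\log|1+tw|=t\,\re(w)+t^2 E_1(t,x,y)$ with $|E_1|$ uniformly bounded (use $\log|1+u|^2=\log(1+2\re u+|u|^2)$ and the standard expansion of $\log(1+\cdot)$). Similarly, since $V$ is a polynomial,
\begin{equation*}
V(x+th(x))=V(x)+tV'(x)h(x)+t^2 E_2(t,x),
\end{equation*}
and because $h$ is supported in a compact set $K$, the remainder satisfies $|E_2(t,x)|\leq C\,\mathbf{1}_K(x)$ uniformly in $|t|\leq t_0$. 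Integrating against the probability measure $\mu$ then produces $O(t^2)$ contributions from both remainders (the first by global boundedness, the second because $\mu(K)<\infty$). Collecting terms,
\begin{equation*}
I^\varphi(\mu^t)-I^\varphi(\mu)=-t\,\re\!\iint w(x,y)\,d\mu(x)d\mu(y)+t\,\re\!\int V'(x)h(x)\,d\mu(x)+O(t^2),
\end{equation*}
and dividing by $t$ and letting $t\to 0$ yields \eqref{explicit_formula_derivative}.

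The main technical obstacle is the control of the remainders uniformly in $(x,y)$ so that they survive integration against the (possibly non-compactly supported) measure $\mu$. This is where the hypothesis $h\in C^2_c$ is essential: the compact support gives the localization needed for the polynomial remainder $E_2$, while the $C^1$ regularity (Lipschitz continuity on the whole plane, with $w=0$ outside a neighborhood of $\supp h$) guarantees that the difference quotient $w(x,y)$ is globally bounded, even when $x$ and $y$ lie on opposite sides of $\supp h$. Once this uniform boundedness is in hand, every step is a routine application of Taylor's theorem combined with dominated convergence; no assumption on the support of $\mu$ is needed.
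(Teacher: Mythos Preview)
Your proof is correct and follows essentially the same approach as the paper's: rewrite $I^\varphi(\mu^t)$ via the pushforward, factor out $(x-y)$ from $h_t(x)-h_t(y)$, and Taylor expand both the logarithm and $V(x+th(x))$ to first order with uniformly bounded $O(t^2)$ remainders. You supply slightly more detail on why the difference quotient $w$ is globally bounded and why the polynomial remainder is compactly supported, but the argument is otherwise identical to the paper's.
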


\begin{proof}
The proof here is the same as in \cite[Proof of Lemma 3.7]{martinez_rakhmanov}. By the Change of Variables Formula
$$
I^\varphi(\mu^t)=\iint \log \frac{1}{\left|x-y +t(h(x)-h(y))\right|} \, d\mu(x)d\mu(y)+\int \varphi(x+th(x)) \, d\mu(x),
$$
which implies
\begin{align}\nonumber
 I^\varphi(\mu^t)-I^\varphi(\mu) & =  -\iint \log\left| 1+t\frac{h(x)-h(y)}{x-y} \right| \, d\mu(x)d\mu(y) \\
		     & \quad    +\int \left(\varphi(x+th(x))-\varphi(x)\right) \, d\mu(x)  \label{equation_for_derivative}.
\end{align}

Since $h\in C_c^2$, we have for $t \to 0$,
\begin{align}\nonumber
\log\left| 1+t\frac{h(x)-h(y)}{x-y} \right| & = \re \left[\log\left( 1+t\frac{h(x)-h(y)}{x-y} \right)\right] \\
					    & =  t \re \left(\frac{h(x)-h(y)}{x-y}\right)+\Boh(t^2), \label{equacao_auxiliar_9}
\end{align}
with the implicit constant in $\Boh(t^2)$ uniform in $x,y \in \C$, and analogously
$$
V(x+th(x))-V(x)=tV'(x)h(x)+\Boh(t^2),
$$
where again the $\Boh$ term is uniform in $x \in \C$. This implies
$$
\varphi(x+th(x))-\varphi(x)=t\re\left(h(x)V'(x)  \right)+\Boh(t^2) \quad \text{ as } t \to 0,
$$
and the result follows by plugging this last equation and \eqref{equacao_auxiliar_9} 
into \eqref{equation_for_derivative}.
\end{proof}

\subsection{Critical measures}

\begin{definition}\label{definition_critical_measures}
We say a measure $\mu \in \mathcal M_1^\varphi(\C)$ is $\varphi${\it -critical} if
$$
D_hI^\varphi(\mu)=0
$$
for every $h\in C^2_c$.
\end{definition}
The definition of $\varphi$-critical measures introduced in \cite{martinez_rakhmanov} allows 
the external field to have singularities. In that situation, the
test functions $h$ appearing in Definition~\ref{definition_critical_measures} should also vanish 
on a set $\mathcal A$ of zero capacity containing the singular points
of $\varphi$, leading to the notion of $(\varphi,\mathcal A)$-critical measures. In \cite{martinez_rakhmanov} 
it is also imposed that critical measures must have compact support.
In the case considered here the only singularity of the external field is at $z=\infty$, and for 
convenience we are initially considering  functions that are not just vanishing at infinity but
also in a neighborhood of it, that is, compactly supported functions.

Considering $h$ and $ih$ separately, we easily obtain the following from Proposition~\ref{proposition_formula_derivative}.
\begin{cor}\label{corollary_integral_equation_critical_measure}
 A measure $\mu\in \mathcal M_1^\varphi(\C)$ is $\varphi$-critical if, and only if,
\begin{equation}\label{integral_equation_critical_measure}
\iint \frac{h(x)-h(y)}{x-y}d\mu(x)d\mu(y)=\int V'(x)h(x)d\mu(x)
\end{equation}
for every $h\in C_c^2$.
\end{cor}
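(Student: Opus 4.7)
The plan is to deduce this directly from Proposition \ref{proposition_formula_derivative} by exploiting the complex linearity hidden in the formula \eqref{explicit_formula_derivative}. Set
$$E(h) = \iint \frac{h(x)-h(y)}{x-y}\, d\mu(x)\,d\mu(y) - \int V'(x) h(x)\, d\mu(x),$$
so that Proposition \ref{proposition_formula_derivative} reads $D_hI^\varphi(\mu) = -\re E(h)$. The statement to be proved says that $\mu$ is $\varphi$-critical if and only if $E(h) = 0$ for every $h \in C^2_c$.

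The ``if'' direction is immediate: if $E(h) = 0$ for all $h \in C^2_c$, then in particular $\re E(h) = 0$, hence $D_hI^\varphi(\mu) = 0$ and $\mu$ is critical by Definition \ref{definition_critical_measures}. For the ``only if'' direction, the key observation is that the functional $h \mapsto E(h)$ is $\C$-linear, and that $C^2_c$ is stable under multiplication by the scalar $i$ (the test functions are complex-valued by assumption). So for any $h \in C^2_c$ the function $\widetilde h = ih$ lies again in $C^2_c$, and by $\C$-linearity $E(\widetilde h) = i\, E(h)$.

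Now suppose $\mu$ is $\varphi$-critical. Applying the criticality condition first to $h$ and then to $ih$, we obtain
$$\re E(h) = 0 \quad \text{and} \quad \re E(ih) = \re\bigl(i E(h)\bigr) = -\im E(h) = 0,$$
so both the real and imaginary parts of $E(h)$ vanish; thus $E(h) = 0$, which is precisely \eqref{integral_equation_critical_measure}.

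There is no real obstacle here; the corollary is essentially a restatement of Proposition \ref{proposition_formula_derivative} once one recognizes that testing against both $h$ and $ih$ upgrades the vanishing of the real part into the vanishing of the full complex expression. The only point that deserves a sentence of justification is the absolute convergence of the double integral defining $E(h)$, but this is built into the hypothesis $\mu \in \mathcal M_1^\varphi(\C)$ together with the fact that $h \in C^2_c$ makes $(h(x)-h(y))/(x-y)$ bounded on $\C \times \C$ (using the $C^1$ bound of $h$ near the diagonal and compact support away from it), and likewise $V'h \in L^1(\mu)$ since $V'h$ has compact support.
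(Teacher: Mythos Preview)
Your argument is correct and is precisely the approach the paper indicates: the paper's entire proof consists of the single remark ``Considering $h$ and $ih$ separately, we easily obtain the following from Proposition~\ref{proposition_formula_derivative},'' and you have simply written out the details of that remark.
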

Equation \eqref{integral_equation_critical_measure} appears frequently in the context 
of random matrices, although usually in different forms and names as for
example {\it Loop equations}, {\it Schwinger-Dyson equations} or {\it Ward identities}, see 
\cite{ameur_hedenmalm_makarov, duits_painleve_kernels, mclaughlin_ercolani_loop_equations}.

We need to extend \eqref{integral_equation_critical_measure} to larger classes of test
functions.

\begin{lem}\label{lemma_test_function_at_infinity}
Let $\mu$ be a $\varphi$-critical measure. Then equation 
\eqref{integral_equation_critical_measure} remains valid for $h\in C^2$
satisfying
 \begin{equation}\label{assumption_behavior_at_infinity_test_function}
 h(z)=\Boh\left(\frac{1}{z^{N-1}}\right),\quad z\to\infty.
 \end{equation}
\end{lem}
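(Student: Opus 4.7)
The strategy is to approximate $h$ by elements of $C^2_c$ and pass to the limit in the critical-measure identity of Corollary~\ref{corollary_integral_equation_critical_measure}. Pick a smooth cutoff $\chi \in C^\infty_c(\C)$ with $\chi \equiv 1$ on $D_1$ and $\chi \equiv 0$ outside $D_2$, set $\chi_R(z) := \chi(z/R)$ and $h_R := h\,\chi_R$. Then $h_R \in C^2_c$, so
\begin{equation*}
\iint \frac{h_R(x)-h_R(y)}{x-y}\,d\mu(x)d\mu(y) \;=\; \int V'(x) h_R(x)\,d\mu(x),
\end{equation*}
and I claim that both sides converge to the analogous expressions with $h$ in place of $h_R$ as $R \to \infty$.

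For the right-hand side, the hypothesis $h(z) = \Boh(z^{-(N-1)})$ together with $\deg V' = N-1$ implies that $V' h$ is bounded on $\C$. Since $|V' h_R| \leq |V' h|$ and $h_R \to h$ pointwise, dominated convergence against the finite measure $\mu$ yields $\int V' h_R\,d\mu \to \int V' h\,d\mu$.

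For the left-hand side, use the decomposition
\[
\frac{h_R(x)-h_R(y)}{x-y} \;=\; \chi_R(x)\,\frac{h(x)-h(y)}{x-y} \;+\; h(y)\,\frac{\chi_R(x)-\chi_R(y)}{x-y}.
\]
The second summand is bounded in modulus by $|h(y)|\,R^{-1}\|\nabla \chi\|_\infty$; for $R \geq 1$ it is dominated by the $\mu\otimes\mu$-integrable function $|h(y)|\,\|\nabla \chi\|_\infty$ (recall $h$ is bounded and $\mu$ is finite) and tends to $0$ pointwise, so its double integral vanishes in the limit. In the first summand, $\chi_R(x) \to 1$ pointwise and the integrand is dominated by $|h(x)-h(y)|/|x-y|$, which is globally bounded on $\C \times \C$ --- by the Lipschitz constant $\|\nabla h\|_\infty$ when $|x-y|\leq 1$, and by $2\|h\|_\infty/|x-y| \leq 2\|h\|_\infty$ when $|x-y|>1$ --- and hence is integrable against the finite measure $\mu \otimes \mu$. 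A second application of dominated convergence then yields convergence to $\iint (h(x)-h(y))/(x-y)\,d\mu(x)d\mu(y)$.

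The main obstacle is the global boundedness of the difference kernel $(h(x)-h(y))/(x-y)$, which ultimately rests on having $\|\nabla h\|_\infty < \infty$. This is natural under the $C^2$ and decay hypotheses on $h$ imposed in the statement, and is straightforwardly verified for the specific test functions (rational or meromorphic with poles off $\supp \mu$) that will be used in the applications later in this section. Once this bound is in hand, both passages to the limit are justified and equation~\eqref{integral_equation_critical_measure} extends to the class of $h$ considered here.
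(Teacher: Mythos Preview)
Your proof is essentially the paper's: the same cutoff-and-dominated-convergence strategy, the same algebraic decomposition of the difference quotient, and the same integrability bounds. The only cosmetic difference is that the paper writes down an explicit piecewise-polynomial radial cutoff $\Theta_n(|x|)$ where you use a generic smooth $\chi(z/R)$.

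You are right to flag the point about $\|\nabla h\|_\infty$, but your phrasing that this ``is natural under the $C^2$ and decay hypotheses'' is too optimistic: a $C^2$ function satisfying $h(z)=\Boh(z^{-(N-1)})$ can oscillate rapidly at infinity and have unbounded gradient, so boundedness of the difference quotient does \emph{not} follow from the stated hypotheses. The paper's own proof glosses over exactly the same issue, simply asserting that $(h(x)-h(y))/(x-y)$ is bounded ``from \eqref{assumption_behavior_at_infinity_test_function}''. As you correctly observe, for the rational test functions actually used in Corollary~\ref{corollary_test_function} the bound is immediate, so the gap is harmless in context; you deserve credit for making it explicit rather than hiding it.
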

\begin{proof}
 Introduce the function $\Theta_n:[0,+\infty)\to \R$ by
$$
\Theta_n(t)= \begin{cases} 
 1, & t\leq n, \\
 \frac{1}{16n^5}(3n-t)^3(2n^2-3nt+3t^2) & n<t<3n, \\
 0, & t\geq 3n.
\end{cases}
$$
Then $\Theta_n \in C^2_c$, $\Theta_n(t)\stackrel{n\to\infty}{\to} 1$ and
\begin{equation}\label{equacao_auxiliar_26}
	0\leq \Theta_n(t)\leq 1,\qquad  |\Theta'_n(t)|< \frac{1}{n}, \quad t\geq 0.
\end{equation}

For $h\in C^2$ satisfying \eqref{assumption_behavior_at_infinity_test_function}, define
$$
h_n(x)=\Theta_n(|x|)h(x),\quad x\in \C.
$$
Then $h_n$ belongs to $C^2_c$, so that equation \eqref{integral_equation_critical_measure} is valid for it.
From assumption \eqref{assumption_behavior_at_infinity_test_function} and the fact that $\deg V = N$, we see
$$
V'(x)h(x)=\Boh(1), \quad x\to\infty,
$$ 
 and from the definition of $h_n$ and the Dominated Convergence Theorem, it is easily seen
\begin{equation}\label{equacao_auxiliar_25}
\int V'(x)h_n(x)d\mu(x)\stackrel{n\to\infty}{\to} \int V'(x)h(x)d\mu(x).
\end{equation}

The measure $\mu$ has finite logarithmic energy, so it has no mass points. In particular, the diagonal 
$\{(x,x) \mid x\in\C\}$ has zero $\mu\times\mu$
measure, implying
$$
\frac{h_n(x)-h_n(y)}{x-y}\stackrel{n\to\infty}{\to} \frac{h(x)-h(y)}{x-y},\quad \mu\times\mu \text{-a.e.}
$$
Using \eqref{equacao_auxiliar_26}, we get
\begin{align*}
\left| \frac{h_n(x)-h_n(y)}{x-y} \right| & =  
	\left| \frac{h(x)-h(y)}{x-y}\Theta_n(|x|)+\frac{\Theta_n(|x|)-\Theta_n(|y|)}{x-y}h(y)\right| \\
					  & \leq \left|\frac{h(x)-h(y)}{x-y}\right|+|h(y)|.
\end{align*}

From \eqref{assumption_behavior_at_infinity_test_function} we find that both terms on the right-hand side 
are bounded, and therefore $\mu\times\mu$-integrable. The Dominated
Convergence Theorem can be applied, yielding
$$
\iint \frac{h_n(x)-h_n(y)}{x-y} d\mu(x)d\mu(y)\stackrel{n\to\infty}{\to} \iint \frac{h(x)-h(y)}{x-y} d\mu(x)d\mu(y),
$$
and the equation \eqref{integral_equation_critical_measure} for $h$ follows from this last limit and
\eqref{equacao_auxiliar_25}, keeping in mind that \eqref{integral_equation_critical_measure} is valid for each $h_n$.
\end{proof}

\begin{lem}\label{lemma_test_function_singularity}
Let $\mu$ be a $\varphi$-critical measure. If $z \in \C$ is such that
 \begin{equation}\label{equacao_auxiliar_27}
 \int\frac{d\mu(x)}{|x-z|}<+\infty,
 \end{equation}
then \eqref{integral_equation_critical_measure} is valid for every function of the form
 $$
 h(x)=\frac{g(x)}{x-z}, \qquad g \in C_c^2.
 $$
\end{lem}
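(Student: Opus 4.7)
The plan is to approximate $h$ by a sequence of $C^2_c$ functions obtained via a smooth cutoff that removes the singularity at $z$, apply Corollary~\ref{corollary_integral_equation_critical_measure} to each approximant, and pass to the limit. Fix a $C^2$ function $\Theta:[0,+\infty)\to[0,1]$ with $\Theta(t)=0$ for $t\leq 1/2$ and $\Theta(t)=1$ for $t\geq 1$, and set $\psi_\epsilon(x)=\Theta(|x-z|/\epsilon)$, so that $\psi_\epsilon$ vanishes on $\overline{D_{\epsilon/2}(z)}$, equals $1$ outside $D_\epsilon(z)$, and satisfies $\|\nabla\psi_\epsilon\|_\infty=O(1/\epsilon)$. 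Then $h_\epsilon:=\psi_\epsilon h$ belongs to $C^2_c(\C)$, since the cutoff kills the singularity of $h$ at $z$ and $g$ has compact support. By Corollary~\ref{corollary_integral_equation_critical_measure} the identity \eqref{integral_equation_critical_measure} holds for $h_\epsilon$, and the plan is to send $\epsilon\to 0$ on both sides.

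For the external field side, I would note that $|V'(x)h_\epsilon(x)|\leq\|V'\|_{L^\infty(\supp g)}\|g\|_\infty\,\mathbf{1}_{\supp g}(x)/|x-z|$ uniformly in $\epsilon$, that this dominator is $\mu$-integrable by hypothesis~\eqref{equacao_auxiliar_27}, and that $\mu$ carries no mass at $z$ (being of finite energy), so dominated convergence yields $\int V'h_\epsilon\,d\mu\to\int V'h\,d\mu$. On the logarithmic side, the algebraic identity
$$
\frac{h(x)-h(y)}{x-y}=-\frac{g(x)}{(x-z)(y-z)}+\frac{1}{y-z}\cdot\frac{g(x)-g(y)}{x-y}
$$
combined with the Lipschitz bound on $g$ furnishes the pointwise estimate
$$
\left|\frac{h(x)-h(y)}{x-y}\right|\leq\frac{\|g\|_\infty}{|x-z||y-z|}+\frac{\|\nabla g\|_\infty}{|y-z|},
$$
whose right-hand side is $\mu\times\mu$-integrable thanks to~\eqref{equacao_auxiliar_27}; in particular the limiting double integral is well-defined. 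I would then decompose
$$
\frac{h_\epsilon(x)-h_\epsilon(y)}{x-y}=\psi_\epsilon(y)\,\frac{h(x)-h(y)}{x-y}+h(x)\,\frac{\psi_\epsilon(x)-\psi_\epsilon(y)}{x-y}.
$$
The first summand is dominated (uniformly in $\epsilon$) by the integrable majorant above and converges pointwise a.e.\ to $(h(x)-h(y))/(x-y)$, so dominated convergence disposes of it.

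The main obstacle is showing that the second summand $\mathcal E_\epsilon:=\iint h(x)\frac{\psi_\epsilon(x)-\psi_\epsilon(y)}{x-y}\,d\mu(x)d\mu(y)$ vanishes as $\epsilon\to 0$. The kernel $(\psi_\epsilon(x)-\psi_\epsilon(y))/(x-y)$ is supported on $\{x\in D_\epsilon(z)\}\cup\{y\in D_\epsilon(z)\}$ and is controlled by the two complementary bounds $C/\epsilon$ and $1/|x-y|$. The essential input is the elementary consequence
$$
\frac{\mu(D_\rho(z))}{\rho}\leq\int_{D_\rho(z)}\frac{d\mu(x)}{|x-z|}\xrightarrow{\rho\to 0}0
$$
of~\eqref{equacao_auxiliar_27} (the right-hand side being the shrinking tail of a convergent integral), which expresses that $\mu$ is subcritical near $z$. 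Splitting the integration domain according to whether $x$ and $y$ lie inside or outside $D_{2\epsilon}(z)$, and in each piece choosing whichever of the two bounds on the kernel is sharper (together with the separation $|x-y|\geq\frac{1}{2}|x-z|$ valid when $|x-z|\geq 2\epsilon\geq 2|y-z|$), a routine but tedious case analysis shows that each piece contributes $o(1)$. Passing to the limit in the identity for $h_\epsilon$ then yields \eqref{integral_equation_critical_measure} for $h$.
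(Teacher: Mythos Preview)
Your argument is correct. The case analysis for $\mathcal E_\epsilon$ that you sketch does go through: the crucial ingredient is the subcritical estimate $\mu(D_\rho(z))/\rho\to 0$ you extract from~\eqref{equacao_auxiliar_27}, together with the complementary bounds $C/\epsilon$ and $1/|x-y|$ on the cutoff difference quotient and the separation inequality $|x-y|\geq\tfrac12\max(|x-z|,|y-z|)$ in the off-diagonal regions.

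The paper, however, proceeds differently. After translating to $z=0$ it uses the \emph{regularization}
\[
h_n(x)=\frac{g(x)\,\overline{x}}{|x|^2+\epsilon_n^2}
\]
rather than a cutoff. The point of this choice is that a direct algebraic computation yields a \emph{single} dominating function
\[
\left|\frac{h_n(x)-h_n(y)}{x-y}\right|\leq \frac{M}{|x|}+\frac{M}{|y|}+\frac{2M}{|x||y|},
\]
which is $\mu\times\mu$-integrable by~\eqref{equacao_auxiliar_27}, so the passage to the limit is a one-shot application of dominated convergence with no residual error term to estimate. Your cutoff approach is the more standard reflex, and it works, but it costs you the extra work of controlling $\mathcal E_\epsilon$ (which in turn forces you to exploit the subcritical decay of $\mu$ near $z$, an input the paper never needs explicitly). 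The paper's regularization is slicker here precisely because it is tailored to produce approximants whose difference quotients are uniformly dominated from the outset.
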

\begin{proof}
The same idea used to prove Lemma \ref{lemma_test_function_at_infinity} can also be used here, 
namely approximating $h$ by a sequence of functions for which
\eqref{integral_equation_critical_measure} is valid. But now the approximating sequence needs to be modified. 
By translating $x\mapsto x-z$ we can assume $z=0$, and then we rewrite
$$
h(x)=\frac{g(x)\overline x}{|x|^2}.
$$

Define
$$
h_n(x)=\frac{g(x)\overline x}{|x|^2+\epsilon_n^2},
$$
where $(\epsilon_n)$ is a sequence of positive numbers converging to zero. 
Clearly $h_n\in C^2_c$, and so equation \eqref{integral_equation_critical_measure} is valid for it.

Note that \eqref{equacao_auxiliar_27} assures us that $h$ is $\mu$-integrable. Proceeding then similarly as in the proof of Lemma \ref{lemma_test_function_at_infinity},
\begin{equation}\label{equacao_auxiliar_4}
\int V'(x)h_n(x)d\mu(x)\stackrel{n\to\infty}{\to}\int V'(x)h(x)d\mu(x), 
\end{equation}
and also
\begin{equation}\label{equacao_auxiliar_16}
\frac{h_n(x)-h_n(y)}{x-y}\stackrel{n\to\infty}{\to} \frac{h(x)-h(y)}{x-y},\quad \mu\times\mu \text{-a.e.}
\end{equation}
Moreover,
\begin{align}\nonumber
 \left| h_n(x)-h_n(y)  \right| & \leq 
		  \left| \frac{ 
				|y|^2\overline x g(x)-|x|^2\overline y g(y)  
				  }{ (|x|^2+\epsilon_n^2)(|y|^2+\epsilon_n^2)   } \right| 
				    + \frac{\epsilon_n^2 \left| \overline x g(x) - \overline y 
					      g(y) \right|}{(|x|^2+\epsilon_n^2)(|y|^2+\epsilon_n^2)} \\ \nonumber
			       & \leq 
		\left| \frac{ 
				|y|^2\overline x g(x)-|x|^2\overline y g(y)  
				  }{ |x|^2 |y|^2   } \right|\\ \nonumber
				 & \quad + \frac{ \left|\overline x g(x) - \overline y g(y) \right| }{|x||y|}
				    \frac{ 1 }{ \left(\frac{|x|}{\epsilon_n}+\frac{\epsilon_n}{|x|}\right) 
						\left(\frac{|y|}{\epsilon_n}+\frac{\epsilon_n}{|y|}\right)} \\
			      & \leq  \left| \frac{y g(x)-x g(y)}{xy}\right|+\left| \frac{\overline x g(x)-\overline y g(y)}{xy} \right|, 
\label{inequality_quotient_h}
\end{align}
where in the last inequality we used $t+1/t> 1$ for $t\geq 0$.

Now, using the trivial decompositions
\begin{align*}
 \frac{yg(x)-xg(y)}{xy(x-y)} & = \frac{1}{y}\frac{g(x)-g(y)}{x-y}-\frac{g(x)}{xy}, \\
 \frac{\overline x g(x)-\overline y g(y)}{\overline x\overline y(\overline x-\overline y)} & = \frac{1}{\overline x}\frac{g(x)-g(y)}{\overline
x-\overline y}+\frac{g(x)}{\overline x\overline y}
\end{align*}
 in \eqref{inequality_quotient_h}, we get
$$
\left| \frac{h_n(x)-h_n(y)}{x-y} \right| \leq \left(\frac{1}{|x|}+\frac{1}{|y|}\right)\left| \frac{g(x)-g(y)}{x-y} \right| + \frac{2}{|y|}\left|
\frac{g(x)}{x} \right|
$$

Since $g\in C^2_c$, the quotient
$$
\frac{g(x)-g(y)}{x-y},
$$
is bounded, as well as $g$, say both by $M$. This last inequality then implies
$$
\left| \frac{h_n(x)-h_n(y)}{x-y} \right| \leq 	\frac{M}{|x|}+\frac{M}{|y|}+\frac{2M}{|x||y|}.
$$
From assumption \eqref{equacao_auxiliar_27} the right-hand side above is $\mu\times\mu$-integrable. 
From the Dominated Convergence Theorem and \eqref{equacao_auxiliar_16} we conclude
$$
\iint \frac{h_n(x)-h_n(y)}{x-y} d\mu(x)d\mu(y)\stackrel{n\to\infty}{\to} \iint \frac{h(x)-h(y)}{x-y} d\mu(x)d\mu(y).
$$

Since \eqref{integral_equation_critical_measure} is valid for the $h_n$’s, this last limit together 
with \eqref{equacao_auxiliar_4} implies \eqref{integral_equation_critical_measure} for $h$.
\end{proof}

A combination of Lemmas \ref{lemma_test_function_at_infinity} and \ref{lemma_test_function_singularity}
leads to the following.
 
\begin{cor} \label{corollary_test_function}
Let $\mu$ be a $\varphi$-critical  measure. Suppose $z_1, z_2, \ldots, z_{N-1}$ are distinct
points such that
 \begin{equation}\label{equacao_auxiliar_27a}
 \int\frac{d\mu(x)}{|x-z_j|}<+\infty, \qquad j=1, \ldots, N-1.
 \end{equation}
Then equation 
\eqref{integral_equation_critical_measure} is valid for
the function
\[ h(x) = \prod_{j=1}^{N-1} \frac{1}{x-z_j}. \]
\end{cor}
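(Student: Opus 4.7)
My plan is to combine Lemma \ref{lemma_test_function_at_infinity} and Lemma \ref{lemma_test_function_singularity} through a suitable smooth partition of unity adapted to the singularities of $h$. The function $h(x)=\prod_{j=1}^{N-1}(x-z_j)^{-1}$ has exactly two obstructions to being a valid test function: finitely many simple poles at $z_1,\dots,z_{N-1}$, and a decay at infinity of order $|x|^{-(N-1)}$, which is \emph{precisely} the threshold appearing in Lemma \ref{lemma_test_function_at_infinity}. This matching is the reason the hypothesis is phrased the way it is.

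Concretely, I would choose $\chi_1,\dots,\chi_{N-1}\in C^2_c$ such that $\chi_j\equiv 1$ on a small disc around $z_j$, the supports of the $\chi_j$ are pairwise disjoint, and none of them contains any $z_k$ with $k\neq j$. Then set
\[
\chi_0(x)=1-\sum_{j=1}^{N-1}\chi_j(x),
\]
which is $C^2$ on all of $\C$ and vanishes identically in a neighborhood of each $z_j$. Writing $h=\chi_0 h+\sum_{j=1}^{N-1}\chi_j h$, I handle each piece separately: the function $\chi_0 h$ is $C^2$ on $\C$ (its potential singularities are killed by $\chi_0$) and at infinity $\chi_0(x)=1$, so $\chi_0(x)h(x)=\Boh(|x|^{-(N-1)})$, placing it in the scope of Lemma \ref{lemma_test_function_at_infinity}. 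For each $j\geq 1$ I write
\[
\chi_j(x)h(x)=\frac{g_j(x)}{x-z_j},\qquad g_j(x):=\chi_j(x)\prod_{k\neq j}\frac{1}{x-z_k},
\]
and observe that $g_j\in C^2_c$ because $\chi_j$ has compact support disjoint from $\{z_k\}_{k\neq j}$; hypothesis \eqref{equacao_auxiliar_27a} gives the integrability needed to invoke Lemma \ref{lemma_test_function_singularity}.

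Applying each lemma to the corresponding piece yields equation \eqref{integral_equation_critical_measure} for $\chi_0 h$ and for each $\chi_j h$. Summing the $N$ resulting identities, which is just linearity of both sides of \eqref{integral_equation_critical_measure} in the test function, produces the desired equation for $h=\sum_{j=0}^{N-1}\chi_j h$. I expect the only point requiring care is checking absolute convergence when reassembling the integrals, so that linearity is legitimate: the bound $|h(x)|\leq C\sum_j|x-z_j|^{-1}+C|x|^{-(N-1)}\mathbf{1}_{|x|\geq R}$ together with the hypothesis \eqref{equacao_auxiliar_27a} and the fact that $\mu$ is a probability measure shows that all integrals appearing in \eqref{integral_equation_critical_measure} for the pieces are absolutely convergent, which legitimizes the summation and completes the proof.
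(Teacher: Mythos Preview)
Your proof is correct and follows essentially the same approach as the paper: both decompose $h$ via a $C^2$ partition of unity localizing around each $z_j$, apply Lemma~\ref{lemma_test_function_singularity} to the local pieces $\chi_j h = g_j/(x-z_j)$, apply Lemma~\ref{lemma_test_function_at_infinity} to the remaining piece $h-\sum_j \chi_j h$, and then sum by linearity. Your additional remark on absolute convergence is a welcome clarification but not strictly needed, since both sides of \eqref{integral_equation_critical_measure} are already shown to converge absolutely for each piece by the respective lemmas.
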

\begin{proof}
Let $\delta = \frac{1}{4} \min \{ |z_j-z_k| \mid 1 \leq j < k \leq N-1 \}$.
For each $j=1, \ldots, N-1$, we can take a $C^2_c$ function $\chi_j : \mathbb C \to [0,1]$  
supported in $D(z_j, 2\delta)$ with $\chi_j \equiv 1$ in $D(z_j, \delta)$. 
Then $h_j = \chi_j h$ satisfies the condition of Lemma \ref{lemma_test_function_singularity}
and 
$h - \sum_{j=1}^{N-1} h_j$ satisfies the condition of Lemma \ref{lemma_test_function_at_infinity}.
Thus the equality \eqref{integral_equation_critical_measure} holds for these functions,
and then by linearity it also holds for $h$.
\end{proof}

Given a finite positive Borel measure $\mu$, consider the function $\C \to [0,+\infty]$
$$
z\mapsto \int \frac{d\mu(x)}{|x-z|}.
$$
It is the convolution of a finite Borel measure with the function $x\mapsto \frac{1}{|x|}$, 
which is in $L^1_{loc}(\C,m_2)$, where $m_2$ is the planar Lebesgue
measure. Thus Tonelli's Theorem tells us it also belongs to $L^1_{loc}(\C,m_2)$, being then finite $m_2$-a.e.
This implies that the Cauchy Transform $C^\mu$ of $\mu$,
$$
	C^\mu(z)=\int \frac{d\mu(x)}{x-z}, \qquad z\in\C,
$$
is well-defined and finite $m_2$-a.e., and
\begin{equation}\label{behavior_cauchy_transform_infinity}
C^\mu(z)=\Boh(z^{-1}), \quad z\to\infty.
\end{equation}

As observed by Martínez-Finkelshtein and Rakhmanov \cite[Lemma 5.1]{martinez_rakhmanov}, 
an essential feature of critical measures is the fact that their Cauchy Transform satisfies an
algebraic equation of degree $2$.

\begin{prop}\label{proposition_algebraic_equation_critical_measure}
Let $\mu$ be a $\varphi$-critical measure. Then there exists a polynomial $R$ of degree $2N-2$ such that
\begin{equation}\label{algebraic_equation_cauchy_transform}
\left( C^\mu(z)+\frac{1}{2}V'(z) \right)^2=R(z), \qquad m_2 \text{-a.e.}
\end{equation}
where $m_2$ is the Lebesgue measure on $\C$.
\end{prop}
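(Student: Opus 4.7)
The plan is to apply Corollary \ref{corollary_test_function} with the family of test functions $h(x)=\prod_{j=1}^{N-1}(x-z_j)^{-1}$ and then read the resulting identity as a condition on divided differences of $g(z):=C^\mu(z)^2+V'(z)C^\mu(z)$. That condition will force $g$ to agree $m_2$-a.e.\ with a polynomial of degree at most $N-2$, after which \eqref{algebraic_equation_cauchy_transform} follows by completing the square. Throughout, let $\Omega=\{z\in\C:\int d\mu(x)/|x-z|<+\infty\}$; as observed before Proposition \ref{proposition_algebraic_equation_critical_measure}, Tonelli's theorem gives $m_2(\C\setminus\Omega)=0$.

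Fix $N-1$ distinct points $z_1,\ldots,z_{N-1}\in\Omega$ and set $c_j=\prod_{k\neq j}(z_j-z_k)^{-1}$. Partial fractions give $h(x)=\sum_j c_j/(x-z_j)$, and a direct calculation yields
\[
\frac{h(x)-h(y)}{x-y}=-\sum_{j=1}^{N-1}\frac{c_j}{(x-z_j)(y-z_j)},
\]
so Fubini (applicable because each $z_j\in\Omega$) gives
\[
\iint\frac{h(x)-h(y)}{x-y}\,d\mu(x)d\mu(y)=-\sum_{j=1}^{N-1}c_j\,C^\mu(z_j)^2.
\]
For the other side of \eqref{integral_equation_critical_measure}, polynomial division of $V'(x)$ by $\prod_j(x-z_j)$ (both of degree $N-1$) produces
\[
V'(x)h(x)=b_{N-1}+\sum_{j=1}^{N-1}\frac{c_j V'(z_j)}{x-z_j},
\]
where $b_{N-1}$ denotes the leading coefficient of $V'$. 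Integrating term by term (each summand is $\mu$-integrable by the choice of $z_j$) and invoking Corollary \ref{corollary_test_function} leads to
\[
\sum_{j=1}^{N-1}c_j\,g(z_j)=-b_{N-1},
\]
valid for every choice of $N-1$ distinct $z_1,\ldots,z_{N-1}\in\Omega$.

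The left-hand side above is the Lagrange form of the divided difference $g[z_1,\ldots,z_{N-1}]$. Fixing $z_2,\ldots,z_{N-1}$ distinct in $\Omega$ and solving the identity for $g(z)=g(z_1)$ gives
\[
g(z)=-b_{N-1}\prod_{k=2}^{N-1}(z-z_k)+\sum_{j=2}^{N-1}g(z_j)\prod_{\substack{k\geq 2\\k\neq j}}\frac{z-z_k}{z_j-z_k},
\]
for $m_2$-a.e.\ $z\in\Omega$. The right-hand side is a polynomial $Q(z)$ of degree at most $N-2$ with leading coefficient $-b_{N-1}$. Completing the square then yields
\[
\Bigl(C^\mu(z)+\tfrac{1}{2}V'(z)\Bigr)^{\!2}=g(z)+\tfrac{1}{4}V'(z)^2=Q(z)+\tfrac{1}{4}V'(z)^2=:R(z)\qquad(m_2\text{-a.e.}),
\]
and since $V'(z)^2$ has degree exactly $2N-2$ while $\deg Q\leq N-2$, the polynomial $R$ has degree $2N-2$.

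The main subtlety is the one emphasized by the authors: $\mu$ is not known \emph{a priori} to be compactly supported, so the moments $\int x^k\,d\mu$ may be infinite for $k\geq 1$. This forbids the naive strategy of applying \eqref{integral_equation_critical_measure} with $h(x)=1/(x-z)$ and then splitting $V'(x)/(x-z)=V'(z)/(x-z)+[V'(x)-V'(z)]/(x-z)$ and integrating the (polynomial) second term against $\mu$. The choice $h(x)=\prod_{j=1}^{N-1}(x-z_j)^{-1}$ with exactly $N-1$ factors is the minimal one making $V'h$ bounded at infinity; the partial fraction expansion used above then contains only constants and multiples of $(x-z_j)^{-1}$, all $\mu$-integrable by the defining property of $\Omega$, so every integral in the derivation converges absolutely without any hypothesis on the tail of $\mu$.
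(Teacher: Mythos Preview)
Your proof is correct and uses the same test function $h(x)=\prod_{j=1}^{N-1}(x-z_j)^{-1}$ and the same Corollary~\ref{corollary_test_function} as the paper. The only difference is organizational: the paper fixes $N-2$ of the points and keeps one variable $z$ from the outset, extracting $R$ via auxiliary rational functions $D_1,D_2$, whereas you treat all $N-1$ nodes symmetrically via partial fractions and then recognize the resulting identity $\sum_j c_j\,g(z_j)=-b_{N-1}$ as the divided-difference condition $g[z_1,\ldots,z_{N-1}]\equiv -b_{N-1}$ on $\Omega$, which is a clean way to see why $g$ must coincide $m_2$-a.e.\ with a polynomial of degree at most $N-2$.
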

% Proposition \ref{proposition_algebraic_equation_critical_measure} should be compared to \cite[Lemma~5.1]{martinez_rakhmanov}. 
As we will see later, one of the consequences of Proposition
\ref{proposition_algebraic_equation_critical_measure} is that $\varphi$-critical measures are 
always compactly supported, see Proposition \ref{proposition_support_critical_measure} below. 
However, since we cannot assume this {\it a priori},
our proof, at the technical level, is different from the one given in \cite[Proof of Lemma 5.1]{martinez_rakhmanov}, 
although the key ideas are similar.

\begin{proof}
Since $\int \frac{d\mu(x)}{|x-z|}$ is finite $m_2$-a.e., we can fix $N-2$ distinct points $z_1,\hdots,z_{N-2}$ for which
\begin{equation}\label{assumption_cauchy_transform}
\int \frac{d\mu(x)}{|x-z_j|}<\infty,\quad j=1,\hdots,N-2.
\end{equation}

Take $z\in \C \setminus \{z_1,\hdots,z_{N-2}\}$. Since we want to have 
\eqref{algebraic_equation_cauchy_transform} $m_2$-a.e.\ and $\int \frac{d\mu(x)}{|x-z|}$ is finite
$m_2$-a.e., it is enough to prove the identity \eqref{algebraic_equation_cauchy_transform} under the assumption
\begin{equation}\label{assumption_cauchy_transform_2}
\int \frac{d\mu(x)}{|x-z|}<\infty.
\end{equation}

Define
\begin{equation}\label{definition_A_function}
h(x)=\frac{A(x)}{x-z}, \qquad A(x)=\prod_{j=1}^{N-2}(x-z_j)^{-1}.
\end{equation}
Then $h$ satisfies the conditions of Corollary \ref{corollary_test_function} (with $z=z_{N-1}$)
and so \eqref{integral_equation_critical_measure} is valid for
this $h$. We can write
\begin{equation}\label{equacao_auxiliar_14}
\int V'(x)h(x)\, d\mu(x) = A(z)V'(z)C^\mu(z)+D_1(z),
\end{equation}
where $D_1$ is the rational function
$$
D_1(z)=\int \frac{A(x)V'(x)-A(z)V'(z)}{(x-z)}\, d\mu(x),
$$
whose only possible poles are the points $z_1,\hdots,z_{N-2}$, all of them simple, and
\begin{equation}\label{D_1_behavior_infinity}
D_1(z)=\Boh(1),\qquad z\to\infty.
\end{equation}

On the other hand, we can write
$$
\frac{h(x)-h(y)}{x-y}=\frac{(x-y)A(z)+(z-x)A(y)+(y-z)A(x)}{(x-y)(x-z)(y-z)} -\frac{A(z)}{(x-z)(y-z)}.
$$
The first term in the right-hand side is bounded away from $z_1,\hdots,z_{N-2}$, and since we 
are assuming \eqref{assumption_cauchy_transform} this term is $d\mu(x)\times
d\mu(y)$-integrable. The second term is also
{$d\mu(x)\times d\mu(y)$-integrable} because of assumptions \eqref{assumption_cauchy_transform} 
and \eqref{assumption_cauchy_transform_2}. From this we conclude
\begin{equation}\label{equacao_auxiliar_15}
\iint \frac{h(x)-h(y)}{x-y} \, d\mu(x)d\mu(y) = -A(z)(C^\mu(z))^2 + D_2(z),
\end{equation}
where
$$
D_2(z)=\iint \frac{(x-y)A(z)+(z-x)A(y)+(y-z)A(x)}{(x-y)(x-z)(y-z)} \, d\mu(x)d\mu(y),
$$
is also a rational function with simple poles at $z_1,\hdots,z_{N-2}$ and no other poles. Moreover,
\begin{equation}\label{D_2_behavior_infinity}
 D_2(z)=\Boh(z^{-1}),\quad z\to\infty.
\end{equation}

As \eqref{integral_equation_critical_measure} is valid for this $h$, 
equations \eqref{equacao_auxiliar_14} and \eqref{equacao_auxiliar_15} give us
$$
-A(z)(C^\mu(z))^2 + D_2(z)=A(z)V'(z)C^\mu(z)+D_1(z),
$$
which is equivalent to \eqref{algebraic_equation_cauchy_transform} if we set
\begin{equation} \label{expression_R}
R(z)=\frac{D_2(z)-D_1(z)}{A(z)}+\frac{1}{4}\left(V'(z)\right)^2.
\end{equation}
Note that the poles of $A$ cancel out the possible poles of $D_1-D_2$, and due also 
to the behavior of $D_1$ and $D_2$ at infinity given in
\eqref{D_1_behavior_infinity}, \eqref{D_2_behavior_infinity} we see that $R$ is indeed 
a polynomial of degree $2N-2$.
\end{proof}

From the polynomial $R$ in \eqref{algebraic_equation_cauchy_transform}, we can recover $\mu$.

\begin{prop}\label{proposition_density_critical_measure}
 Suppose $\mu$ is a measure on $\mathbb C$ for which there exist polynomials $Q$ and $R$ such that
\begin{equation}\label{algebraic_equation_cauchy_transform_2}
\left( C^\mu(z) + Q(z) \right)^2 = R(z), \qquad m_2 \text{-a.e.}
\end{equation}
Then $\mu$ is supported on a union of analytic arcs, which are maximal trajectories of 
the quadratic differential
$-R(z) \, dz^2$. Moreover, in the interior of any arc of $\supp\mu$, the measure $\mu$ 
is absolutely continuous with respect to the arclength measure, with density given by
\begin{equation}\label{absolutely_continuous_critical_measure}
	d\mu(s)=\frac{1}{\pi i}\sqrt{R(s)}\, ds,
\end{equation}
where $ds$ is the complex line element, chosen according to a fixed orientation of the 
arcs of $\supp \mu$.
\end{prop}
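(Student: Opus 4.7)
The plan is to exploit the analyticity of $F:=C^\mu+Q$ on $\Omega:=\C\setminus\supp\mu$ together with the polynomial identity $F^2=R$, to read off the geometry of $\supp\mu$ from a distributional analysis of $\partial_{\bar z}F=\pi\mu$, and to extract the density from the Sokhotski-Plemelj jump formula.

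\emph{Analytic extension.} The function $F$ is holomorphic on $\Omega$, and $F^2=R$ holds there $m_2$-a.e.; by the identity theorem applied on each connected component, this identity extends to all of $\Omega$. Hence on every component of $\Omega$ the function $F$ coincides with one of the two holomorphic branches of $\sqrt R$, and in particular $|F|^2=|R|$ is locally bounded, so $F\in L^\infty_{\mathrm{loc}}(\C)$.

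\emph{Local structure of $\supp\mu$.} Fix $z_0\in\supp\mu$ with $R(z_0)\neq 0$ and a disk $D$ on which $R$ admits two analytic square roots $\pm r$. Then $F/r\in\{\pm 1\}$ a.e.\ on $D$, so $F=r(2\mathbf 1_E-1)$ for some measurable $E\subset D$. Combining $\partial_{\bar z}C^\mu=\pi\mu$ in the distributional sense with $\partial_{\bar z} r=0$ yields
\[
\pi\mu = \partial_{\bar z} F = 2r\,\partial_{\bar z}\mathbf 1_E \quad\text{on }D.
\]
Since $\mu$ is a positive measure and $r$ is non-vanishing, $E$ must have locally finite perimeter, $\partial_{\bar z}\mathbf 1_E$ is a complex measure concentrated on the reduced boundary $\partial^{*}E$, and so $\supp\mu\cap D\subseteq\partial^{*}E$ has vanishing planar measure. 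At $\mathcal H^1$-a.e.\ point of this interface both nontangential limits $F_{\pm}$ exist and satisfy $F_{\pm}^2=R$; they cannot coincide, for otherwise $F$ would extend holomorphically across that point and the distributional identity would force $\mu$ to vanish nearby. Hence $F_{+}=-F_{-}$ on $\supp\mu\cap D$, which is thus the interface between two distinct analytic branches of $\sqrt R$ and therefore itself an analytic arc.

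\emph{Density and trajectories.} At a smooth point of $\supp\mu$ with $R\neq 0$, Sokhotski-Plemelj applied to $C^\mu$ with the orientation inherited from $ds$ (so that the ``$+$''-side lies to the left of $ds$) gives $C^\mu_{+}-C^\mu_{-}=2\pi i\, h$, where $h$ is the density of $\mu$ with respect to $ds$. Combined with $F_{+}-F_{-}=C^\mu_{+}-C^\mu_{-}$, $F_{+}=-F_{-}$, and $F_{+}=R_+^{1/2}$, this yields $h(s)=\tfrac{1}{\pi i}R_+^{1/2}(s)$, proving the density formula. Positivity of $d\mu$ then forces $R(s)(ds)^2\in -\R_+$ along the arc, i.e.\ $-R(z)\,dz^2>0$, which is exactly the defining condition of a horizontal trajectory of the quadratic differential. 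Maximality is immediate: if an arc of $\supp\mu$ terminated at a regular point $z_1$ with $R(z_1)\neq 0$, continuity of $F$ would require both $F_{+}(z_1)=-F_{-}(z_1)$ (from the supported side) and $F_{+}(z_1)=F_{-}(z_1)$ (from the free continuation along the trajectory), forcing $R(z_1)=0$.

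The principal technical obstacle is the second step: upgrading the purely $L^\infty$ branch-switching picture to a genuine analytic arc structure. The perimeter-theoretic argument yields only rectifiability of $\partial^{*}E$; real-analyticity must then be read off by combining this with the fact that both branches $\pm r$ of $\sqrt R$ are themselves analytic, so that the interface where $F$ switches from $+r$ to $-r$ inherits analyticity. Once this is in hand, the density computation and the trajectory description reduce to routine boundary-value analysis.
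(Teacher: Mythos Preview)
Your plan differs substantially from the paper's proof, and the step you yourself flag as ``the principal technical obstacle'' is indeed a genuine gap.

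The paper's argument avoids geometric measure theory entirely. On a simply connected $G$ avoiding the zeros of $R$, it introduces the conformal map $\xi(z)=\int_{z_0}^z\sqrt{R(s)}\,ds$ and the $\{\pm 1\}$-valued function $\chi=(C^\mu+Q)/\sqrt{R}$. A direct distributional computation shows that after the change of variable, $\partial_{\bar\xi}\chi_*=-\pi\,\mu_*$. Because $\mu_*$ is a \emph{real} measure, the $\partial_y$-part of $\partial_{\bar\xi}\chi_*$ vanishes, so $\chi_*$ depends only on $\re\xi$; being $\{\pm 1\}$-valued and non-increasing it is a single step function, and $\supp\mu_*$ is a vertical segment. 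Pulling back by the conformal map, the segment becomes an analytic trajectory of $-R\,dz^2$ and the density $\tfrac{1}{\pi}dy$ pulls back to $\tfrac{1}{\pi i}\sqrt{R(s)}\,ds$. The analyticity of $\supp\mu$ is thus not ``inherited'' from anything: it is forced by the one-dimensional reduction.

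Your argument, by contrast, stops at rectifiability. Writing $F=r(2\mathbf 1_E-1)$ and concluding that $E$ has locally finite perimeter is fine, but the sentence ``the interface where $F$ switches from $+r$ to $-r$ inherits analyticity'' is not an argument: analyticity of $r$ imposes no regularity on $\partial^*E$ by itself. What actually forces the geometry is the positivity of $\mu$, which constrains the measure-theoretic normal of $E$ to point in the direction $\bar r/|r|$ at $\mathcal H^1$-a.e.\ point of $\partial^*E$. Turning this into the statement that $\partial^*E$ is a smooth arc still requires an additional step---and the cleanest way to carry it out is precisely to pass to the $\xi$-coordinate, where the constraint becomes ``the normal is horizontal'' and one recovers the paper's argument. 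Without that (or an equivalent structure theorem), the proof is incomplete; the subsequent Sokhotski--Plemelj step presupposes that $\supp\mu$ is already known to be a smooth curve.

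Two minor points: with the paper's convention $C^\mu(z)=\int\frac{d\mu(x)}{x-z}$ one has $\partial_{\bar z}C^\mu=-\pi\mu$, not $+\pi\mu$; and your maximality argument at the end is fine but again presupposes that the local arc structure has already been established.
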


Observe that by Proposition \ref{proposition_algebraic_equation_critical_measure}, the condition \eqref{algebraic_equation_cauchy_transform_2} is valid  for $\varphi$-critical
measures with the choice $Q(z)=\frac{V'(z)}{2}$.

Since Proposition \ref{proposition_density_critical_measure} is essentially about local properties
of the measure $\mu$, it is not important if the measure $\mu$ has unbounded support or not. 
So the proof presented here is essentially the same as the one  given in \cite[Lemma~5.2]{martinez_rakhmanov} for
measures with bounded support, which in turn is modelled after a result in \cite{bergkvist_rullgard}.
We decided to give a full detailed proof here for the benefit of the reader, and also to correct a sign misprint
in \cite{martinez_rakhmanov}: in the first centered formula following formula (5.23) 
in \cite{martinez_rakhmanov} the right-hand side should have a $-$-sign.

\begin{proof}
Let $G$ be a simply connected domain not containing zeros of $R$ and intersecting $\supp\mu$. 
Fix a point $z_0\in \supp\mu\cap G$ and select a branch of $\sqrt R$ in
$G$, and define
$$
\xi(z)=\int_{z_0}^z\sqrt{R(s)} \, ds, \quad z\in G.
$$
Reducing $G$ if necessary, we can assume $\xi$ is a conformal mapping between $G$ and 
$\xi(G)=I\times iJ=\widehat G$, $I,J\subset \R$ intervals both containing $z=0$.

For this same branch of $\sqrt R$, define for $m_2$-a.e.\ $z\in G$ a function $\chi$ by the formula
$$
\chi(z)=\frac{C^\mu(z)+Q(z)}{\sqrt{R(z)}}.
$$
Due to \eqref{algebraic_equation_cauchy_transform_2} $\chi$ assumes values in $\{-1,1\}$. 
A simple application of the Cauchy-Green Formula \cite[pg.~491]{Greene_Krantz_book} gives us
$$
\frac{\partial C^\mu}{\partial \overline z} = -\pi  d\mu
$$
in the sense of distributions, and we conclude (also in the sense of distributions)
\begin{equation}\label{derivative_chi_z}
	\frac{\partial}{\partial \overline z} \, \left(\sqrt{R(z)}\chi(z)\right)=
	\sqrt{R(z)}\, \frac{\partial \chi}{\partial\overline z}(z)=-\pi d\mu(z).
\end{equation}

Now, $\xi$ is a conformal map from $G$ onto $\widehat G$, with inverse denoted by
$z=z(\xi)$, which induces a map between distributions in the $z$-variable and 
the $\xi$-variable, say $u\mapsto u_*$,
where $u_*$ is the distribution acting on test functions in $\widehat{G}$ via the formula
$$
\langle u,\phi\rangle = \langle u_*, \psi \rangle,
$$
where $\psi(\xi)=\phi(z)$.

Now we will calculate the (distributional) derivative of $\chi_*(\xi)=\chi(z(\xi))$. 
If $\mu_*$ is the pushforward measure of $\mu$
induced by $\xi$ in $\widehat G$ then \eqref{derivative_chi_z} and the chain rule applied to $\psi(\xi)=\phi(z)$ imply
\begin{align*}
\langle \mu_*,\psi \rangle & =  \langle \mu,\phi \rangle \\
			    & =  -\frac{1}{\pi}\left\langle \frac{\partial}{\partial \overline z}\left(\sqrt R \, \chi\right),\phi\right\rangle \\
			    & =  \frac{1}{\pi}\int \frac{\partial \phi}{\partial \overline z}(z) \sqrt{R(z)} \, \chi(z)\, dm_2(z) \\
			    & =  \frac{1}{\pi}\int \frac{\partial}{\partial \overline z} \left( \psi(\xi(z)) \right) \sqrt{R(z)} \, \chi(z) \, dm_2(z) \\
			    & =  \frac{1}{\pi}\int \left( \frac{\partial \psi}{\partial \xi}(\xi(z)) \frac{\partial\xi}{\partial \overline z}(z) 
				    + \frac{\partial \psi}{\partial\overline\xi}(\xi(z))\overline{\frac{\partial\xi}{\partial z}(z)} \right)\sqrt{R(z)}\chi(z) \, dm_2(z) \\
			    & =  \frac{1}{\pi}\int \frac{\partial \psi}{\partial\overline\xi}(\xi(z))\left| \sqrt{R(z)} \right|^2 \chi(z) \, dm_2(z),
\end{align*}
since $\frac{\partial \xi}{\partial \overline{z}} = 0$ and $\frac{\partial \xi}{\partial z} = \sqrt{R(z)}$.
Thus by the Change of Variables Formula  for the Lebesgue measure
\begin{align*}
\langle \mu_*, \psi \rangle	& =  \frac{1}{\pi} \int \frac{\partial \psi}{\partial\overline\xi}(\xi) 
								\left| \sqrt{R(z(\xi))} \right|^2 \chi_*(\xi) \, \left|z'(\xi)\right|^2\, dm_2(\xi) \\
			    & =  \frac{1}{\pi}\int \frac{\partial \psi}{\partial\overline\xi}(\xi) \, \chi_*(\xi)\, dm_2(\xi),
					\end{align*}
where we used that $z'(\xi)	= \frac{1}{\xi'(z)} = \frac{1}{\sqrt{R(z)}}$.
The result is that $ \langle \mu_*, \psi \rangle	 =	
- \frac{1}{\pi}\left\langle \frac{\partial \chi_*}{\partial\overline\xi},\psi\right\rangle $
which can be rewritten as
\begin{equation}\label{distributional_derivative_chi_xi}
\frac{\partial \chi_*}{\partial\overline \xi}=\frac{1}{2}\frac{\partial \chi_*}{\partial x}+\frac{i}{2}\frac{\partial
\chi_*}{\partial y}=-\pi d\mu_*,
\end{equation}
where $\xi=x+iy$ with $x,y\in\R$. As $\mu$ is a real measure, we conclude $\frac{\partial \chi_*}{\partial y}=0$, which means
$\chi_*(\xi)=g(\re \xi)$, for a real function $g$ defined on the interval $I$.

If $\lambda$ is the pushforward measure of $\mu_*$ induced by $\xi\mapsto \re \xi$, using
\eqref{distributional_derivative_chi_xi} we can conclude in a similar fashion that
$$
\frac{dg}{dx}=-\alpha d\lambda,
$$
where $\alpha$ is a positive constant, and so
$$
g(x)=\beta - \alpha \int_{-\infty}^x d\lambda,
$$
for some real constant $\beta$. But $g$ only assumes values in $\{1,-1\}$, and since $\int_{-\infty}^xd\lambda$ is
non-decreasing, we conclude $g$ is non-increasing and also that there exists $x_0$ in $I$ for which $\lambda=c\delta_{x_0}$,
where $c$ is a positive constant, and then
$$
g(x)=1-2\int_{-\infty}^x d\delta_{x_0}.
$$

In terms of $\mu_*$ and $\chi_*$, this means that there exists a vertical segment 
$L=\{x_0\}\times iJ$ in $\widehat G$ with $\chi_*$ equal to $1$ at the left side of $L$ and $-1$ at
the right side of $L$, and $\supp \mu_*$ is contained in $L$.
But for such $\chi_*$ a direct computation shows
$$
\frac{\partial \chi_*}{\partial \overline \xi}=\frac{1}{2}\frac{\partial \chi_*}{\partial x} =-dy,
$$
where $dy$ is the Lebesgue measure on $L$, so by \eqref{distributional_derivative_chi_xi}
$$
d\mu_*=\frac{1}{\pi}dy=\frac{1}{i\pi}dt.
$$
where $dt$ is the line element in $L$, oriented such that $\chi_*=1$ on the positive side of $L$. 
Since $\xi$ is a conformal map we can pullback the measures $d\mu_*$ and $dt$, obtaining
$$
d\mu(s)=\frac{1}{\pi i}\sqrt{R(s)}\, ds,
$$
where $ds$ is the complex line element on the trajectory $\gamma=\xi^{-1}(L)$ chosen
with orientation such that $\xi(z)=1$ on the positive side (i.e., on the left-hand side) of $\gamma$.

Finally, the trajectory arcs of $\supp\mu$ must be maximal, because the arguments above 
show that $\supp\mu$ is an analytic arc in a neighborhood of any point of its support that
is not a zero of $R$.
\end{proof}

For the next proposition, it is important that $\varphi=\re V$ with $V$ polynomial.

\begin{prop}\label{proposition_support_critical_measure}
For any $\varphi$-critical measure $\mu$, the support $\supp\mu$ is compact and can be written as
\begin{equation}\label{decomposition_support_critical_measure}
\supp\mu = \bigcup_j\alpha_j,
\end{equation}
where each $\alpha_j$ is an analytic arc and the union is taken over a finite set of indices.
Each $\alpha_j$ is a critical trajectory of the quadratic differential $-R(z) dz^2$.
\end{prop}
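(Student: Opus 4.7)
The plan is to strengthen Proposition~\ref{proposition_density_critical_measure} into three sub-claims treated in order: (i) $\supp\mu$ is compact; (ii) every maximal trajectory arc of $-R(z)\,dz^2$ contained in $\supp\mu$ is a critical trajectory; (iii) their number is finite.

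For claim (i) I would exploit the density formula from Proposition~\ref{proposition_density_critical_measure}: on each trajectory arc in $\supp\mu$, $\mu$ is absolutely continuous with respect to the real arclength element $|ds|$ with density $\frac{1}{\pi}|\sqrt{R(s)}|$. Since $R$ has degree $2N-2\geq 2$, this density grows at least like $|s|^{N-1}$ as $|s|\to\infty$, so any arc extending to infinity would carry infinite $\mu$-mass, contradicting $\mu(\C)=1$. Thus all arcs are bounded and the closed set $\supp\mu$ is compact.

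For claim (ii) I would first invoke the classical structure theory of trajectories of polynomial quadratic differentials on $\overline\C$ (see \cite{strebel_book}): every maximal trajectory of $-R(z)\,dz^2$ is either critical (both limit endpoints are zeros of $R$), a closed Jordan curve avoiding the zeros of $R$, or has at least one end at $\infty$, recurrent behavior being excluded. Claim (i) rules out arcs of the last type inside $\supp\mu$, so it remains to eliminate closed loops. Assume for contradiction that $\alpha\subset\supp\mu$ is a closed Jordan trajectory avoiding the zeros of $R$, enclosing a disc $D$, and let $V$ denote the component of $\C\setminus\supp\mu$ adjoining $\alpha$ on the side of $\C\setminus\overline D$. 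On $V$ the analytic function $g := C^\mu+V'/2$ is a single-valued branch of $\sqrt R$, say $g=\sigma_V\sqrt R$ (otherwise $V$ would contain an odd-order zero of $R$, destroying analyticity of $g$). Taking a contour $\tilde\alpha\subset V$ enclosing $\overline D$ counterclockwise, Fubini yields $\oint_{\tilde\alpha} C^\mu(z)\,dz = -2\pi i\,\mu(\overline D)$; letting $\tilde\alpha\to\alpha$ and using that $\oint V'/2\,dz=0$,
\[
\sigma_V \oint_{\alpha,\mathrm{CCW}}\sqrt R(z)\,dz \;=\; -2\pi i\,\mu(\overline D).
\]
On the other hand, Proposition~\ref{proposition_density_critical_measure} with the orientation convention coming from its proof (the ``positive side'' of $\alpha$ is the one on which $\chi=(C^\mu+V'/2)/\sqrt R=+1$) gives
\[
\mu(\alpha)\;=\;-\,\frac{\sigma_V}{\pi i}\oint_{\alpha,\mathrm{CCW}}\sqrt R(z)\,dz\;=\;2\mu(\overline D).
\]
Combined with the trivial bound $\mu(\alpha)\leq\mu(\overline D)$, this forces $\mu(\overline D)=\mu(\alpha)=0$. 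But the density $\frac{1}{\pi}|\sqrt{R(s)}|$ is strictly positive on the zero-free arc $\alpha$, so $\mu(\alpha)>0$, a contradiction. For (iii), once every arc in $\supp\mu$ is critical, its two limit endpoints are zeros of $R$; since a zero of order $p$ emits at most $p+2$ trajectories, and the orders sum to $2N-2$, the total count of arcs is at most $\tfrac12\sum_i(p_i+2)=N+k-1$, where $k$ is the number of distinct zeros.

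The delicate point will be the bookkeeping of branch choices and orientations in the closed-loop argument: the density formula is tied to the side on which $\chi=+1$ (which determines both the branch of $\sqrt R$ and the direction of $ds$), while Cauchy's formula naturally uses the CCW orientation of $\partial D$. A case analysis $\sigma_V=\pm1$ confirms that the factor $2$ relating $\mu(\alpha)$ to $\mu(\overline D)$ is robust and independent of these sign choices, which is exactly what drives the contradiction.
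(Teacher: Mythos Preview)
Your argument is essentially correct and follows the same skeleton as the paper --- Propositions~\ref{proposition_algebraic_equation_critical_measure} and~\ref{proposition_density_critical_measure} give the density formula and the trajectory structure, and then one argues with the global geometry of trajectories of $-R(z)\,dz^2$. There are two points worth flagging.

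\textbf{A small gap in (i).} From ``every individual arc in $\supp\mu$ is bounded'' you cannot directly conclude ``$\supp\mu$ is bounded'': a priori there could be infinitely many bounded arcs accumulating at $\infty$. The paper closes this in one stroke using the local structure of $-R(z)\,dz^2$ at its pole $\infty$ of order $2N+2$ (Strebel, Theorem~7.4): there is a neighborhood $U$ of $\infty$ such that every trajectory meeting $U$ tends to $\infty$; hence if $\supp\mu$ met $U$ it would contain an unbounded trajectory, which your mass estimate then excludes. So $\supp\mu\subset\C\setminus U$ is bounded, and compactness follows immediately. You should insert this step (you already cite Strebel in (ii), so the ingredient is there; only the ordering needs fixing).

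\textbf{A genuine difference in (ii).} The paper disposes of (ii) in one sentence by invoking the standard fact that a polynomial quadratic differential on $\overline\C$ has no bounded non-critical trajectories: the only bounded trajectories are critical ones joining two zeros of $R$. You instead give a self-contained argument that rules out closed loops \emph{inside $\supp\mu$} via the period identity: computing $\oint g$ with $g=C^\mu+\tfrac12 V'$ from either side of a hypothetical closed trajectory $\alpha$ and using $g_{\mathrm{in}}=-g_{\mathrm{out}}$ yields $\mu(\alpha)=2\mu(\overline D)$, which forces $\mu(\alpha)=0$ and contradicts positivity of the density. This is correct (the sign bookkeeping you worry about indeed works out, since both boundary values are limits of the single-valued function $g$ and are negatives of each other pointwise along $\alpha$). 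Your route has the virtue of not relying on the full structure theorem for polynomial quadratic differentials, using instead the specific analytic relation between $\mu$ and $R$; the paper's route is shorter but leans on a black-box fact.
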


\begin{proof} 
Propositions \ref{proposition_algebraic_equation_critical_measure} and 
\ref{proposition_density_critical_measure} tell us that the measure $\mu$ is supported on maximal
trajectories of the quadratic differential 
$$
-R(z)dz^2=-\left(C^{\mu}(z)+\frac{1}{2}V'(z)\right)^2dz^2,
$$
which has a pole of order $2N+2$ at $z=\infty$. The general theory of quadratic differentials assures 
us that there exists a neighborhood $U$ of $z=\infty$ such that
every trajectory intersecting $U$ ends up at $z=\infty$ in at least one direction 
\cite[Theorem 7.4]{strebel_book}. In particular, if $\gamma\cap U\neq \emptyset$ for some maximal
trajectory $\gamma$ of $\supp \mu$, then $\gamma$ would be unbounded, and due to 
\eqref{absolutely_continuous_critical_measure} the measure $\mu$ would not be finite.

Since $R$ is a polynomial, the only bounded trajectories are critical trajectories
that connect two different zeros of $R$. Since there are only finitely many zeros of $R$,
the union in \eqref{decomposition_support_critical_measure} is
certainly over a finite number of arcs.
\end{proof}

It is worth noting that the  expression \eqref{expression_R} for $R$ in terms of the function $A$ defined in
\eqref{definition_A_function}, depends on the chosen points $z_1,\hdots,z_{N-2}$ at which the 
Cauchy transform of $\mu$ is convergent. Since we now know by Proposition \ref{proposition_support_critical_measure}
that $\supp\mu$ is compact, we can take the constant function $A(x) = 1$ in \eqref{definition_A_function},
and the rest of the proof of \eqref{algebraic_equation_cauchy_transform} then works fine
for this $A$. % Then we remove the aparent dependence of $R$ to these points $z_1,\hdots,z_{N-2}$. 
Repeating all the computation, we also end up with a nicer representation for $R$,
\begin{equation}\label{alternative_expression_R}
R(z)= \left(\frac{V'(z)}{2} \right)^2 -\int \frac{V'(x)-V'(z)}{x-z}\, d\mu(x),
\end{equation}
which is well-known for equilibrium problems in polynomial external fields on the real line, see e.g.\ 
\cite{deift_kriecherbauer_mclaughlin}. This representation also allows us to derive some
extra properties for $R$. For example, from \eqref{alternative_expression_R} one easily sees
that $R(z)-\frac{(V'(z))^2}{4}$ is a polynomial of degree $n-2$, whose leading 
coefficient coincides with minus the leading coefficient of $V'$.

Also, if $V$ is quadratic, say
$$
V(z)=az^2+bz+c, \qquad a \neq 0,
$$
then \eqref{alternative_expression_R} reduces to
$$
R(z)=\left(\frac{2az+b}{2}\right)^2-2a,
$$
since $\mu$ is a probability measure.
Then the zeros of $R$ are 
$$
z_\pm=\frac{-b\pm 2\sqrt{2a}}{2a},
$$
for some  choice of the branch of the square root, and the support of 
the  $\varphi$-critical measure is the segment from $z_-$ to $z_+$. 

For $V$ cubic, the polynomial $R(z)-\frac{(V'(z))^2}{4}$ is linear, so there is one free 
parameter to be determined by the requirement of extra conditions, see for instance
\cite{alvarez_alonso_medina_s_curves,huybrechs_lejon_kuijlaars,deano_huybrechs_kuijlaars}.
See also \cite{deano_orthogonal_polynomials_bounded_interval} for a similar situation.

\subsection{Critical measures and the S-property}

By Proposition \ref{proposition_density_critical_measure}, the support of a $\varphi$-critical 
measure $\mu$ consists of analytic arcs and their endpoints. The analytic arcs are trajectories
of the quadratic differential $-R(z) dz^2$. Each regular point $z \in \supp \mu$ has a 
neighborhood, say $G$, for which $G\cap\supp\mu$ is an analytic arc. 
The only non-regular points are the zeros of $R$.

The next result is taken from \cite{martinez_rakhmanov}.

\begin{prop}\label{proposition_s_property_critical_measures}
Let $\mu$ be a $\varphi$-critical measure, with $\supp \mu=\bigcup \alpha_j$
as in Proposition \ref{proposition_support_critical_measure}.
\begin{enumerate}[i)]
\item Then there exist constants $\omega_j \in \R$ such that
$$
U^\mu(z)+\frac{1}{2}\varphi(z)=\omega_j, \qquad z\in\alpha_j.
$$
\item If $z\in\supp\mu$ is regular, then
\begin{equation*}
\frac{\partial }{\partial n_+}\left(U^\mu+\frac{1}{2}\varphi\right)(z)=\frac{\partial}{\partial
n_-}\left(U^\mu+\frac{1}{2}\varphi\right)(z),
\end{equation*}
where $n_\pm$ are the normal vectors to $\supp\mu$ at $z$ pointing in opposite directions.

\end{enumerate}
\end{prop}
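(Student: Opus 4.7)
The plan is to combine Propositions \ref{proposition_algebraic_equation_critical_measure} and \ref{proposition_support_critical_measure} with a direct Wirtinger calculation. Set $\Phi(z) = U^\mu(z) + \tfrac12\varphi(z)$, which is real-valued, and observe that a standard computation gives
$$\frac{\partial \Phi}{\partial z}(z) = \frac{1}{2}\left(C^\mu(z) + \frac{1}{2} V'(z)\right), \qquad z \in \C\setminus\supp\mu,$$
since $\partial_z \log|z-x| = \tfrac{1}{2(z-x)}$ and $\partial_z \varphi = \tfrac12 V'$. By Proposition \ref{proposition_algebraic_equation_critical_measure}, $4(\partial_z \Phi)^2 = R$ off $\supp\mu$. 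In a neighborhood $G$ of a regular point $z_0 \in \alpha_j$ with $G$ free of zeros of $R$, one selects a holomorphic branch of $R^{1/2}$ in each of the two components of $G \setminus \alpha_j$; the jump of $C^\mu$ across $\alpha_j$ implied by \eqref{absolutely_continuous_critical_measure} then fixes the signs, yielding $\partial_z \Phi_\pm = \pm \tfrac{1}{2} R_+^{1/2}$ on the respective sides of $\alpha_j$.

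For part (i), parametrize $\alpha_j$ near $z_0$ by $z = z(t)$, $t \in \R$. Since $\alpha_j$ is a horizontal trajectory of $-R(z)\, dz^2$, we have $R(z(t))(z'(t))^2 < 0$, i.e.\ $R_+^{1/2}(z(t))\, z'(t)$ is purely imaginary. The real-valued function $\Phi_\pm$ then satisfies
$$\frac{d}{dt}\Phi_\pm(z(t)) = 2\re\bigl(\partial_z \Phi_\pm \cdot z'(t)\bigr) = \pm\re\bigl(R_+^{1/2}(z(t))\, z'(t)\bigr) = 0,$$
so $\Phi_\pm$ is locally, hence globally, constant on the connected analytic arc $\alpha_j$. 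Because $\mu$ has a locally bounded density away from the zeros of $R$ by \eqref{absolutely_continuous_critical_measure}, $U^\mu$ extends continuously across $\alpha_j$ at regular points, and so $\Phi_+ = \Phi_- =: \omega_j \in \R$ there; the common value then propagates along all of $\alpha_j$ by continuity.

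For part (ii), let $T$ denote the unit tangent to $\alpha_j$ at a regular point $z$, so that the unit normals pointing into the two sides are $n_\pm = \pm iT$. For any real-valued function $\Phi$, the directional derivative in direction $v \in \C$ is $2\re(v\, \partial_z \Phi)$, hence
$$\frac{\partial \Phi_+}{\partial n_+}(z) = 2\re\bigl(iT \cdot \tfrac{1}{2} R_+^{1/2}\bigr), \qquad \frac{\partial \Phi_-}{\partial n_-}(z) = 2\re\bigl(-iT \cdot \bigl(-\tfrac{1}{2} R_+^{1/2}\bigr)\bigr),$$
and the two expressions coincide. The main obstacle is bookkeeping: fixing orientations and branches so that the two sides of $\alpha_j$ receive opposite signs of $R^{1/2}$ consistently with the density formula \eqref{absolutely_continuous_critical_measure}. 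Once this is in place, both (i) and (ii) are immediate consequences of the fact that along a horizontal trajectory of $-R\, dz^2$ the one-form $R^{1/2}\, dz$ is purely imaginary.
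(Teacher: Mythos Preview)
Your proof is correct. The paper itself does not give a proof but simply refers to Lemma~5.4 of \cite{martinez_rakhmanov}; your argument via the Wirtinger identity $2\partial_z\Phi = C^\mu + \tfrac12 V'$ together with the fact that $R^{1/2}\,dz$ is purely imaginary along horizontal trajectories of $-R\,dz^2$ is precisely the standard route (and is essentially what the cited reference does), so you are supplying the details the paper omits rather than taking a different path.
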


\begin{proof} 
See Lemma 5.4 of \cite{martinez_rakhmanov}.
\end{proof}

An immediate consequence of part  ii) of Proposition 
\ref{proposition_s_property_critical_measures} is.
\begin{cor}\label{corollary_s_property}
If $V$ is a polynomial, $\varphi=\re V$ and $\Gamma\subset \C$ is a contour satisfying the growth condition \eqref{growth_condition} and whose equilibrium 
measure $\mu^{\varphi,\Gamma}$ is a $\varphi$-critical measure, then $\Gamma$ has the
S-property in the external field $\varphi$.
\end{cor}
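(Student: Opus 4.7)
The goal is to verify the two conditions in Definition \ref{definition_s_property} for $\Gamma$, using the hypothesis that $\mu := \mu^{\varphi,\Gamma}$ is a $\varphi$-critical measure. The plan is essentially a bookkeeping step: to unpack the conclusions of Propositions \ref{proposition_support_critical_measure} and \ref{proposition_s_property_critical_measures} and check that together they constitute the S-property.

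First I would identify a candidate for the exceptional finite set $E$ appearing in Definition \ref{definition_s_property}. By Proposition \ref{proposition_algebraic_equation_critical_measure} there is a polynomial $R$ of degree $2N-2$ with
\[
\left(C^\mu(z)+\tfrac{1}{2}V'(z)\right)^2 = R(z), \qquad m_2\text{-a.e.},
\]
and by Proposition \ref{proposition_support_critical_measure} the support $\supp\mu$ is a finite union $\bigcup_j \alpha_j$ of critical trajectories of $-R(z)\,dz^2$. I would then take $E$ to consist of the (finitely many) zeros of $R$ together with the endpoints of the arcs $\alpha_j$; this is finite since $R$ has only $2N-2$ zeros and the union defining $\supp\mu$ is finite. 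At every point $z \in \supp\mu \setminus E$, the point is a regular point of the quadratic differential lying in the interior of some analytic arc $\alpha_j$, so $\supp\mu$ is locally an analytic arc near $z$. This verifies the first requirement in the definition.

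Next I would verify the normal-derivative identity \eqref{Sproperty}. This is exactly the content of part ii) of Proposition \ref{proposition_s_property_critical_measures}: at any regular point $z \in \supp\mu$,
\[
\frac{\partial}{\partial n_+}\Bigl(U^\mu+\tfrac{1}{2}\varphi\Bigr)(z)
= \frac{\partial}{\partial n_-}\Bigl(U^\mu+\tfrac{1}{2}\varphi\Bigr)(z).
\]
Since every $z \in \supp\mu \setminus E$ is regular (by our choice of $E$), this identity holds throughout $\supp\mu \setminus E$, which is the second requirement in Definition \ref{definition_s_property}.

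Assembling these two observations gives that $\Gamma$ has the S-property in the external field $\varphi$. There is no genuine obstacle here: the real work has been done in establishing Propositions \ref{proposition_algebraic_equation_critical_measure}, \ref{proposition_density_critical_measure}, \ref{proposition_support_critical_measure}, and \ref{proposition_s_property_critical_measures}, and the corollary is simply a matter of matching their conclusions to the hypotheses of Definition \ref{definition_s_property}. The only small point to keep in mind is that the growth condition \eqref{growth_condition} on $\Gamma$ is used to ensure that the equilibrium measure $\mu^{\varphi,\Gamma}$ is well defined and unique via \eqref{variational_condition_1}--\eqref{variational_condition_2}, so that the hypothesis ``$\mu^{\varphi,\Gamma}$ is $\varphi$-critical'' is meaningful.
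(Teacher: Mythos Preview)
Your proof is correct and matches the paper's approach: the paper simply records the corollary as an ``immediate consequence of part ii) of Proposition \ref{proposition_s_property_critical_measures},'' and your write-up is nothing more than a careful unpacking of that remark, identifying $E$ via the zeros of $R$ (and arc endpoints) from Propositions \ref{proposition_algebraic_equation_critical_measure} and \ref{proposition_support_critical_measure}.
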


As a final remark, we notice that the cornerstone of the present section is the Proposition 
\ref{proposition_algebraic_equation_critical_measure}, which gives us that the Cauchy
transform of a $\varphi$-critical measure is an algebraic function. One could also analyse 
\eqref{algebraic_equation_cauchy_transform} from another perspective, namely given $V'$,
find a polynomial $R$ and a probability measure $\mu$ such that the algebraic equation 
\eqref{algebraic_equation_cauchy_transform} is satisfied. This is the spirit of some works
already present in the literature, e.g.  \cite{shapiro_holst,shapiro_takemura_tater,shapiro_tater,martinez_rakhmanov}. 
It is expected that the pairs $(R,\mu)$ satisfying
\eqref{algebraic_equation_cauchy_transform} depend on continuous parameters, but as it follows 
from the present work, only for a finite number of pairs $(R,\mu)$ the measure $\mu$
is an equilibrium measure (of some admissible contour) in the external field $\varphi=\re V$. More precisely, 
just for a finite number of pairs $(R,\mu)$ obtained in this way,
all the constants $\omega_j$ appearing in Proposition \ref{proposition_s_property_critical_measures} i) coincide. 
In such a case  there is a choice of partition
$\mathcal P$ and a contour $\mathcal T(\mathcal P)$ for which $\mu$ is the equilibrium measure of $\Gamma$
in the external field $\varphi$.

%%%%%%%%%%%%%%%%%%%%%%%%%%%%%%%%%%%%%%%%%%%%%%%%%%%%%%%%%%%%%%%%%%%%%%
\section{Critical sets}\label{critical_measures_critical_sets}
%%%%%%%%%%%%%%%%%%%%%%%%%%%%%%%%%%%%%%%%%%%%%%%%%%%%%%%%%%%%%%%%%%%%%%

We continue to use the notation introduced in the beginning of Section \ref{critical_measures}.
So for $h \in C^2_c$ we have $h_t$ as in \eqref{h_variation}, and we
denote by $\mu^t$ the pushforward of a measure $\mu$ induced by 
$h_t$. For a subset $F\subset \C$, we denote $F^t=h_t(F)$.

In Section \ref{critical_measures} we studied variations of the energy 
functional $I^\varphi$ induced by $h$, viewed as acting
on measures, see \eqref{equation_derivative_energy_measure}. 
But we might as well study variations of $I^{\varphi}$ when viewed as 
acting on sets, that is, we might consider the limit
\begin{equation}\label{equation_derivative_energy_compact}
D_hI^\varphi(F)=\lim_{t\to 0}\frac{I^\varphi(F^t)-I^\varphi(F)}{t}.
\end{equation}

Just as the vanishing of $D_h I^{\varphi}(\mu)$ defines critical measures,
the vanishing of $D_h I^{\varphi}(F)$ defines critical sets.

\begin{definition} \label{definition_critical_set}
We say that a closed set $F$ is a $\varphi$-critical set (or simply a critical set) if
the limit in \eqref{equation_derivative_energy_compact} exists for every $h \in C_c^2$, and
\[	D_h I^{\varphi}(F) = 0 \]
for every $h \in C_c^2$.
\end{definition}

The remarkable fact is that the limit \eqref{equation_derivative_energy_compact} 
coincides with the derivative of $I^\varphi$ at the equilibrium measure in external field of $F$.
\begin{prop}\label{proposition_derivative_energy_compact} 
Let  $F\subset \C$ be a closed set with $-\infty<I^\varphi(F)<+\infty$. Then
the limit in \eqref{equation_derivative_energy_compact} exists and 
\begin{equation}
 D_hI^\varphi(F) = D_hI^\varphi(\mu^{\varphi,F}).
 \end{equation}
\end{prop}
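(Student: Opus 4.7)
The plan is to sandwich $I^\varphi(F^t)$ from above and below by quantities of the form $I^\varphi(F) + t\, D_h I^\varphi(\nu) + \Boh(t^2)$ for two different probability measures $\nu$, both of which will turn out to converge to $\mu := \mu^{\varphi,F}$ as $t \to 0$. The key preliminary is a uniform version of Proposition \ref{proposition_formula_derivative}: for every probability measure $\nu$ on $\C$ with finite weighted energy,
\begin{equation} \label{uniform_plan}
I^\varphi((h_t)_*\nu) = I^\varphi(\nu) + t\, D_h I^\varphi(\nu) + E(t,\nu), \qquad |E(t,\nu)| \le C(h,V)\, t^2.
\end{equation}
This is immediate from re-reading the proof of Proposition \ref{proposition_formula_derivative}: both Taylor remainders are uniform in $x,y \in \C$ because $h \in C^2_c$ and $V$ is a polynomial, and the offending integrands vanish identically outside $\supp h$. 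The same observation yields a uniform bound $|D_h I^\varphi(\nu)| \le C'$ for all probability $\nu$.

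Since $(h_t)_*\mu$ is a probability measure on $F^t$, \eqref{uniform_plan} applied to $\mu$ immediately yields the upper estimate
\[ I^\varphi(F^t) \le I^\varphi((h_t)_*\mu) = I^\varphi(F) + t\, D_h I^\varphi(\mu) + \Boh(t^2). \]
For the lower estimate, I would set $\nu_t := \mu^{\varphi, F^t}$ and $\tilde\nu_t := (h_t^{-1})_*\nu_t$. Since $h$ has compact support, for $|t|$ small enough $h_t$ is a diffeomorphism of $\C$ equal to the identity outside $\supp h$, so $\tilde\nu_t$ is a probability measure on $F$ and thus $I^\varphi(\tilde\nu_t) \ge I^\varphi(F)$. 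Applying \eqref{uniform_plan} to $\tilde\nu_t$, and noting $\nu_t = (h_t)_*\tilde\nu_t$, gives
\[ I^\varphi(F^t) = I^\varphi(\tilde\nu_t) + t\, D_h I^\varphi(\tilde\nu_t) + \Boh(t^2) \ge I^\varphi(F) + t\, D_h I^\varphi(\tilde\nu_t) + \Boh(t^2). \]
Combining the two estimates and dividing by $t$, one obtains for $t>0$ (and symmetrically for $t<0$)
\[ D_h I^\varphi(\tilde\nu_t) - \Boh(t) \le \frac{I^\varphi(F^t) - I^\varphi(F)}{t} \le D_h I^\varphi(\mu) + \Boh(t). \]

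The hard part will be to establish the convergence $D_h I^\varphi(\tilde\nu_t) \to D_h I^\varphi(\mu)$ as $t \to 0$; once this is in hand, the sandwich above yields both the existence of the limit and the desired equality $D_h I^\varphi(F) = D_h I^\varphi(\mu)$. I would argue this in two sub-steps. First, the sandwich together with the uniform bound $|D_h I^\varphi(\tilde\nu_t)| \le C'$ forces $0 \le I^\varphi(\tilde\nu_t) - I^\varphi(\mu) = \Boh(|t|)$. Expanding
\[ I^\varphi(\tilde\nu_t) - I^\varphi(\mu) = I(\tilde\nu_t - \mu) + 2 \int \left( U^\mu + \tfrac{1}{2}\varphi - l \right) d(\tilde\nu_t - \mu), \]
and invoking \eqref{variational_condition_1}--\eqref{variational_condition_2} (so that the last integral is nonnegative because its integrand vanishes on $\supp\mu$ and is nonnegative on $F$), one deduces the energy convergence $I(\tilde\nu_t - \mu) \to 0$. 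Second, since the logarithmic energy is positive definite on the space of zero-mass signed measures of finite energy, this energy convergence implies vague convergence $\tilde\nu_t \to \mu$ on the Riemann sphere. Finally, the kernel $\frac{h(x)-h(y)}{x-y}$ and the function $V'(x) h(x)$ extend continuously to the compactification with value $0$ at $\infty$ (because $h$ is compactly supported), so the explicit formula \eqref{explicit_formula_derivative} is continuous along this convergence of probability measures, yielding $D_h I^\varphi(\tilde\nu_t) \to D_h I^\varphi(\mu)$ and completing the proof.
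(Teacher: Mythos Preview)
Your argument is correct and follows essentially the same route as the paper's proof: a two-sided estimate obtained by pushing equilibrium measures back and forth via $h_t$, reduction to the convergence $D_h I^\varphi(\cdot)\to D_h I^\varphi(\mu)$, and establishing the latter via the energy convergence $I(\cdot-\mu)\to 0$ together with Lemma~\ref{lemma_vague_convergence}. The one minor difference is in how the energy convergence is obtained: you bound $I(\tilde\nu_t-\mu)$ through the Euler--Lagrange decomposition with \eqref{variational_condition_1}--\eqref{variational_condition_2}, whereas the paper uses the parallelogram identity $I(\sigma_1-\sigma_2)=2I^\varphi(\sigma_1)+2I^\varphi(\sigma_2)-4I^\varphi\bigl(\tfrac{\sigma_1+\sigma_2}{2}\bigr)$ together with minimality of the equilibrium measure on $F^t$. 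The parallelogram route has the small advantage of not invoking \eqref{variational_condition_1}--\eqref{variational_condition_2} for a general closed $F$ (in the paper these are only stated for contours satisfying the growth condition~\eqref{growth_condition}); your invocation is nonetheless legitimate, since the quasi-everywhere version of the variational conditions is standard and is in fact used for $F_0$ later in the paper.
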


The existence of the equilibrium measure in the external field $\mu^{\varphi,F}$ is guaranteed by 
Theorem \ref{rakhmanov_theorem_1}. Proposition \ref{proposition_derivative_energy_compact} is due
to Rakhmanov, who gives a proof in \cite[Section 9.10]{rakhmanov_orthogonal_s_curves} and refers
to an unpublished manuscript where the argument is first given. 
We found that one of the steps in the proof in \cite{rakhmanov_orthogonal_s_curves}, 
namely the equivalent of \eqref{limite_derivada_mu} below, might require some additional explanation, and
therefore we decided to present here a detailed proof.

To prove Proposition \ref{proposition_derivative_energy_compact}, we will need an auxiliary lemma. 
We use $\mathcal M_0$ to denote the set of signed measures defined on $\C$, whose
positive and negative parts belong to $\mathcal M_1(\mathcal \C)$ (see the beginning of
Section \ref{potential_theory_notions} for the definition of $\mathcal M_1(\mathcal \C)$). 
Recall that a sequence of finite signed measures $(\sigma_n)$
converges vaguely to a finite signed measure $\sigma$ if
$$
\int f d\sigma_n\stackrel{n\to\infty}{\to} \int f d\sigma,
$$
for every continuous functions $f$ with compact support.

\begin{lem}\label{lemma_vague_convergence}
 The functional
$$
\mathcal M_0 \ni \sigma \mapsto I(\sigma)=\iint \log|x-y|^{-1}d\sigma(x)d\sigma(y)
$$
is well-defined on $\mathcal M_0$ and strictly positive for $\sigma \neq 0$. 
If $(\sigma_n)$ is a sequence of signed measures in $\mathcal M_0$ with 
$$
I(\sigma_n)\stackrel{n\to\infty}{\to} 0,
$$
then the sequence $(\sigma_n)$ converges vaguely to the null measure.
\end{lem}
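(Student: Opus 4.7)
My plan is to base the proof on the classical Frullani identity
\[
\log\frac{1}{|z|} = \frac{1}{2}\int_0^\infty \frac{e^{-t|z|^2} - e^{-t}}{t}\,dt, \qquad z\in\C\setminus\{0\},
\]
together with the observation that every $\sigma\in\mathcal{M}_0$ has total mass $\sigma(\C)=0$, because it is the difference of two probability measures. Writing $A_t(\sigma)=\iint e^{-t|x-y|^2}\,d\sigma(x)d\sigma(y)$, substituting the identity into the double integral defining $I(\sigma)$, applying Fubini, and using $\sigma(\C)=0$ to cancel the $e^{-t}$ contribution, I expect to arrive at the representation
\[
I(\sigma) \;=\; \frac{1}{2}\int_0^\infty \frac{A_t(\sigma)}{t}\,dt.
\]
The Gaussian kernel $(x,y)\mapsto e^{-t|x-y|^2}$ is positive definite, so $A_t(\sigma)\geq 0$, and this representation simultaneously yields well-definedness of $I(\sigma)$ as an element of $[0,+\infty]$ and the bound $I(\sigma)\geq 0$.

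The main technical point, which I expect to be the hardest step, is justifying Fubini in the substitution above, since $\log|x-y|^{-1}$ changes sign and elements of $\mathcal{M}_0$ are not assumed to be compactly supported. My plan is to split the $t$-integral at $t=1$: for $t\geq 1$ the factor $e^{-t|z|^2}-e^{-t}$ is uniformly bounded and exponentially small, so absolute integrability against $|\sigma|\times|\sigma|$ is immediate; for $t\leq 1$ I would combine the elementary bound $|e^{-t|z|^2}-e^{-t}|\leq Ct(1+|z|^2)$ with the growth condition \eqref{integrability_at_infinity} on $\sigma_\pm$ to dominate the tails in $|z|$. Strict positivity when $\sigma\neq 0$ will then follow from the Gaussian Fourier identity
\[
A_t(\sigma) \;=\; \frac{1}{4\pi t}\int_{\R^2}|\hat\sigma(\xi)|^2 e^{-|\xi|^2/(4t)}\,d\xi, \qquad \hat\sigma(\xi)=\int e^{-i\xi\cdot x}\,d\sigma(x).
\]
If $\sigma\neq 0$ then $\hat\sigma$ is continuous and not identically zero, so the right-hand side is strictly positive for every $t>0$, and the representation of $I(\sigma)$ gives $I(\sigma)>0$.

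For the vague convergence statement, suppose $I(\sigma_n)\to 0$. Since $A_t(\sigma_n)\geq 0$ and $\int_0^\infty A_t(\sigma_n)\,dt/t\to 0$, passing to a subsequence yields $A_t(\sigma_n)\to 0$ for a.e.\ $t>0$. Specializing to $t=1$, the Fourier identity gives $\int|\hat\sigma_n(\xi)|^2 e^{-|\xi|^2/4}\,d\xi\to 0$, hence $\hat\sigma_n\to 0$ in Lebesgue measure on $\R^2$. The uniform bound $|\hat\sigma_n(\xi)|\leq|\sigma_n|(\C)\leq 2$ then allows me to pass to a further subsequence with $\hat\sigma_n\to 0$ pointwise a.e. For any $f\in C_c^\infty(\C)$, Parseval's identity expresses $\int f\,d\sigma_n$ as a constant multiple of $\int \hat f(\xi)\hat\sigma_n(-\xi)\,d\xi$; since $\hat f$ is Schwartz and hence in $L^1(\R^2)$, dominated convergence yields $\int f\,d\sigma_n\to 0$. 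A density argument for $C_c^\infty\subset C_c$ in the sup norm, with errors controlled by $|\sigma_n|(\C)\leq 2$, upgrades this to $\int f\,d\sigma_n\to 0$ for every $f\in C_c(\C)$, which is vague convergence of the subsequence to the null measure; the standard subsequence principle gives vague convergence of the full sequence.
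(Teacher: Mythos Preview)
Your approach via the Frullani/Gaussian superposition is genuinely different from the paper's and in principle more self-contained: the paper cites Mattner for well-definedness and strict positivity, and then passes through the approximation scheme of Cegrell--Kolodziej--Levenberg (mollify and truncate $\sigma_n$ to smooth compactly supported densities, for which the Fourier representation $I(\sigma)=2\pi\int|\hat f(\xi)|^2|\xi|^{-2}\,d\xi$ is available) before applying Plancherel and Cauchy--Schwarz. Your decomposition $\log\frac{1}{|z|}=\tfrac12\int_0^\infty t^{-1}(e^{-t|z|^2}-e^{-t})\,dt$ bypasses the approximation step entirely and yields nonnegativity directly from positive-definiteness of the Gaussian kernel; this is an attractive simplification.

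However, your Fubini justification has a real gap. The bound $|e^{-t|z|^2}-e^{-t}|\le Ct(1+|z|^2)$ on $t\in(0,1]$, after dividing by $t$ and integrating in $t$, leaves you needing $\iint(1+|x-y|^2)\,d|\sigma|\,d|\sigma|<\infty$, i.e.\ finite second moments of $|\sigma|$; the growth condition \eqref{integrability_at_infinity} only gives $\int\log(1+|x|)\,d|\sigma|<\infty$, which is far weaker. Your $t\ge1$ estimate has the same defect near $z=0$: there $\int_1^\infty t^{-1}e^{-t|z|^2}\,dt\sim 2\log(1/|z|)$ is not uniformly bounded. The clean fix is to observe that for each fixed $z$ the integrand $e^{-t|z|^2}-e^{-t}$ has constant sign in $t$, so
\[
\int_0^\infty\frac{|e^{-t|z|^2}-e^{-t}|}{t}\,dt=\Bigl|2\log\tfrac{1}{|z|}\Bigr|,
\]
and Fubini reduces exactly to $|\log|x-y||\in L^1(|\sigma|\times|\sigma|)$. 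The growth condition handles $|x-y|\ge1$, but for $|x-y|<1$ you need finiteness of $I(\sigma_\pm)$ and of the mutual energy $I(\sigma_+,\sigma_-)$, which is precisely what the paper imports from Mattner. So your route does not avoid that input; you either have to assume finite energies of $\sigma_\pm$ (as the paper effectively does and as holds in the application to $\mu_F^t-\mu_{F^t}$) or supply an independent argument.

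One minor slip: after extracting a subsequence with $A_t(\sigma_n)\to0$ for a.e.\ $t$, you cannot simply ``specialize to $t=1$''; pick some $t_0>0$ in the full-measure set. The remainder of your convergence argument (Gaussian Fourier identity, dominated convergence against $\hat f\in L^1$, density of $C_c^\infty$ in $C_c$ with errors controlled by $|\sigma_n|(\C)\le2$, and the subsequence principle) is correct.
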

\begin{proof}
The lemma is well known if we consider signed measures supported in the unit disc, 
see for example \cite[pg.~80, Theorem~1.16, pg.~88, Lemma~1.3]{Landkof_book}. The main issue
here is that we are dealing with signed measures  with possibly unbounded support. 
The proof we are going to give is actually obtained by collecting known results together and fitting them to
our needs here.

Given two measures $\mu,\nu$ with finite energies $I(\mu)$, $I(\nu)$, the integral
$$
I(\mu,\nu)=\iint \log\frac{1}{|x-y|}d\mu(x)d\nu(y),
$$
is well defined and finite \cite[Theorem 2.2]{mattner_singular_integrals}, that is, 
$|\log|x-y|^{-1}|\in L^1(\mu\times\nu)$. In particular, if
$\sigma=\sigma_+-\sigma_-\in \mathcal M_0$, then the value
$$
I(\sigma)=I(\sigma_+)+I(\sigma_-)-2I(\sigma_+,\sigma_-)
$$
is also well defined and finite. Due to a result of Mattner \cite[Theorem 2.2]{mattner_singular_integrals}
it is positive, i.e., $I(\sigma) \geq 0$ and $I(\sigma) = 0$ if and only if $\sigma$ is the zero measure.

For $\sigma\in\mathcal M_0$ compactly supported and absolutely continuous with respect to 
the Lebesgue measure $m_2$, say $d\sigma=fdm_2$, with $f\in C^\infty_c$ and
real-valued satisfying $\int fdm_2=0$, Cegrell et al.
\cite[Lemma 2.4]{cegrell_kolodziej_levenberg} obtained the representation 
\begin{equation}\label{expression_energy_absolutely_continuous_measure}
I(\sigma)=2\pi\int \frac{|\hat f(x)|^2}{|x|^2}\, dm_2(x),
\end{equation}
for $I(\sigma)$, where
$$
\hat f(z)=\int f(x)e^{-2\pi i \re(x\overline z)}\, dm_2(x)
$$
is the Fourier transform of $f$ (induced by $\R^2$, so that $\re(x\overline z)=\re x\re z+\im x \im z$ is just 
the inner product in $\R^2$). For
$\sigma=\sigma_+-\sigma_-\in\mathcal M_0$ and a given sequence of approximation to the identity $(\psi_n)$ 
($\psi_n$ is smooth and independent of $\sigma$), satisfying 
$$
\int \psi_n \, dm_2=1, \quad \psi_n(z)\geq 0, \ \text{ for } z\in \C, \text{ and } \quad
 \psi_n(z) \equiv 0 \ \text{ for }  |z|>\frac{1}{n},
$$
Cegrell et al.\ also constructed a sequence of absolutely continuous 
signed measures $\mu_n=\psi_n\ast \lambda_n \in \mathcal M_0$ with
$
\lim_{n \to \infty} I(\mu_n)  =  I(\sigma),
$
where 
\begin{equation}\label{definition_sequence_convolution}
\lambda_n=\restr{\sigma}{\Sigma_n}-\sigma(\Sigma_n)\lambda,
\end{equation}
and $\lambda$ is the normalized Lebesgue measure on the unit disc and $(\Sigma_n)$ a 
sequence of discs $\Sigma_n=D_{R_n}$ whose respective sequence of radii $(R_n)$ converges to
$+\infty$ when $n\to \infty$.

So now consider a sequence $(\sigma_m)$ in $\mathcal M_0$ with $I(\sigma_m)\to 0$.
For each $m$ consider the measures $\lambda_{n,m}$ defined as in
\eqref{definition_sequence_convolution} for $\sigma=\sigma_m$. For each $m$, 
we can take $n=n(m)$ large enough, such that for $\mu_m :=\lambda_{n,m} \ast \psi_n$,
\begin{equation}\label{limit_energies_approximation}
|I(\mu_m)-I(\sigma_m)|<\frac{1}{m},
\end{equation}
and we can make sure that $n(m+1)>n(m)$, for all $m$. The explicit form of 
$\mu_m$ and this last condition on the indices imply in particular 
\begin{equation}\label{vague_convergence_convolution}
\mu_m-\sigma_m\to 0 \text{ vaguely.}
\end{equation}

The measures $\mu_m$ are absolutely continuous w.r.t.\ $m_2$ with a smooth and compactly 
supported density, say $d\mu_m=f_mdm_2$. Equations
\eqref{expression_energy_absolutely_continuous_measure} and \eqref{limit_energies_approximation} 
and the convergence $I(\sigma_m)\to 0$ then imply
$$
\frac{\hat f_m(x)}{x}\to 0 \, \text{ in } L^2(m_2) \qquad \text{ as } m \to \infty.
$$

If $h=h_1+ih_2\in C^\infty_c$, then $h_j,f_m\in L^1(m_2)\cap L^2(m_2)$ and 
also $x\hat h_j\in L^2(m_2)$, and the Plancherel Theorem and Cauchy-Schwarz inequality imply for $j=1,2$,
\begin{align*}
\left|\int h_j\, d\mu_m\right|  & =   \left|\int h_j(x)f_m(x)\, dm_2(x)\right| \\
				 & = \left| \int \hat h_j(z)\hat f_m(z)\, dm_2(z) \right| \\
				 & \leq \|x\hat h_j\|_2 \left\|\frac{\hat f_m}{x}\right\|_2 \to 0,
					\qquad \text{ as } m \to \infty,
\end{align*}
so that 
\begin{equation}\label{limit_integral_test_functions}
	\lim_{m \to \infty} \int h \; d\mu_m  =  0.
\end{equation}

Finally, since $C^\infty_c$ is dense in $C_c$ w.r.t. the uniform norm and $|\int f_m\, dm_2|$ is 
uniformly bounded, the inequality 
$$
\left|\int h \, d\mu \right|\leq \|h-h_n\|_{\infty}\left|\int f_m\, dm_2\right|+\left|\int h_n\, d\mu_m\right|
$$
for a sequence $(h_n)$ of functions in $C^\infty_c$ converging to $h\in C_c$ in the uniform norm 
shows the limit
\eqref{limit_integral_test_functions} is also true for $h\in C_c$, that is, $\mu_m\to 0$ vaguely. 
In virtue of \eqref{vague_convergence_convolution} this is equivalent to the
vague convergence $\sigma_m\to 0 $.
\end{proof}

\begin{proof}[Proof of Proposition \ref{proposition_derivative_energy_compact}]
For $t$ small enough and $h_t$ as in \eqref{h_variation}, the inverse map $h_t^{-1}$ is a well defined $C^2_c$
function which satisfies
\begin{align}
 h^{-1}_t(x)                        & = x-th(x)+\boh(t), \label{first_equation_asympt_h_t}\\ 
\frac{h^{-1}_t(x)-h^{-1}_t(y)}{x-y} & = 1-t\frac{h(x)-h(y)}{x-y}+\boh(t),\label{second_equation_asympt_h_t}
\end{align}
with the $o$ terms uniform in $x,y \in \mathbb C$.

For simplicity, denote by $\mu_F$ and $\mu_{F^t}$ the equilibrium measures in the external field $\varphi$ of $F$ and $F^t$, 
respectively. By Rakhmanov's Theorem \ref{rakhmanov_theorem_1} and
the assumption $h\in C^2_c$ these measures certainly exist.

Clearly, in view of the definitions \eqref{equation_derivative_energy_measure} and 
\eqref{equation_derivative_energy_compact} and the Proposition \ref{proposition_formula_derivative},  
it is enough to prove that
$$
I^\varphi(F^t)-I^\varphi(F)=I^\varphi(\mu^t_F)-I^\varphi(\mu_F)+\boh(t), \quad  t\to 0,
$$
which comes down to proving
$$
I^\varphi(\mu_{F^t})-I^\varphi(\mu_F^t)=\boh(t), \quad t\to 0.
$$

For a given $t$, denote by $\mu_{F^t}^{-t}$ the pushforward measure of $\mu_{F^t}$ induced by the inverse mapping $h^{-1}_t$.
Bearing in mind equations \eqref{first_equation_asympt_h_t}, \eqref{second_equation_asympt_h_t} and mimicking the proof of
Proposition \ref{proposition_formula_derivative} we obtain
$$
I^\varphi(\mu_{F^t}^{-t})-I^\varphi(\mu_{F^t})=-tD_hI^\varphi(\mu_{F^t})+\boh(t), \quad  t\to 0,
$$
and also from Proposition \ref{proposition_formula_derivative},
$$
I^\varphi(\mu_F^t)-I^\varphi(\mu_F)=tD_hI^\varphi(\mu_F)+\boh(t), \quad  t\to 0.
$$
This in turn implies
\begin{align}\nonumber
 0 & \leq I^\varphi(\mu_F^t)-I^\varphi(\mu_{F^t}) \\ \nonumber
	 &   = I^\varphi(\mu_F)-I^\varphi(\mu_{F^t}^{-t})+t(D_hI^\varphi(\mu_F)-D_hI^\varphi(\mu_{F^t}))+\boh(t) \\
				           & \leq  t(D_hI^\varphi(\mu_F)-D_hI^\varphi(\mu_{F^t})) +\boh(t), \quad t \to 0, \label{equacao_auxiliar_5}
\end{align}
so we are done if we can show
\begin{equation}\label{limite_derivada_mu}
 \lim_{t \to 0} D_hI^\varphi(\mu_{F^t})  = D_hI^\varphi(\mu_F).
\end{equation}

To obtain \eqref{limite_derivada_mu}, assume for a moment
\begin{equation}\label{limite_fraco_mu}
 \int f \, d\mu_{F^t}\stackrel{t\to 0}{\to}\int f\, d\mu_F, \quad f\in C_c,
\end{equation}
that is, $\mu_{F^t}\to \mu_F$ vaguely, to be proved later.
Since $h\in C^2_c$ the function
$$
(x,y)\mapsto \frac{h(x)-h(y)}{x-y}
$$
is continuous with compact support, so that by \eqref{limite_fraco_mu}
\begin{equation}\label{integral_perturbacao_mu_1}
\iint\frac{h(x)-h(y)}{x-y}\, d\mu_{F^t}(x)d\mu_{F^t}(y)\stackrel{t\to 0}{\to} \iint\frac{h(x)-h(y)}{x-y}\, d\mu_{F}(x)d\mu_{F}(y).
\end{equation}
The function $hV'$ belongs also to $C^2_c$, so again by \eqref{limite_fraco_mu}
\begin{equation}\label{integral_perturbacao_mu_2}
 \int h(x)V'(x)\, d\mu_{F^t}(x)\stackrel{t\to 0}{\to} \int h(x)V'(x)\, d\mu_F(x).
\end{equation}
Using Proposition \ref{proposition_formula_derivative} and \eqref{integral_perturbacao_mu_1}, 
\eqref{integral_perturbacao_mu_2}, the limit \eqref{limite_derivada_mu} follows and
the proof is done, provided that we have \eqref{limite_fraco_mu}.

The proof of \eqref{limite_fraco_mu} will be done in two steps, namely $\mu_F^t \to \mu_F$ 
and $\mu_{F^t}-\mu_F^t \to 0$ vaguely. 

The first of these limits follows directly from the Change of Variables Formula,
$$
\int f\, d\mu_{F^t} = \int f\circ h_t \, d\mu \to \int f\, d\mu, \qquad t \to 0,
$$
because $f$ has compact support, and so it is bounded, and $h_t(z)\to z$ as $t\to 0$, and the 
Dominated Convergence Theorem can be applied.

The difference $D_hI^\varphi(\mu_F)-D_hI^\varphi(\mu_{F^t})$ is bounded, thanks to 
Proposition \ref{proposition_formula_derivative}, so \eqref{equacao_auxiliar_5} implies
$I^\varphi(\mu^t_F)-I^\varphi(\mu_{F^t})\to 0$. Now, following notations of the previous Lemma,
 \begin{align}\nonumber
0\leq I(\mu^t_F-\mu_{F^t})	 &  =   
				2I^\varphi( \mu^t_F) + 2I^\varphi( \mu_{F^t}) - 
					4I^\varphi \left( \frac{\mu^t_F+\mu_{F^t}}{2} \right) \\ \nonumber
					    &  \leq  2I^\varphi(\mu^t_F)+2I^\varphi(\mu_{F^t}) - 4I^\varphi(\mu_{F^t}) \\ 
					    &  =     2I^\varphi(\mu^t_F)-2I^\varphi(\mu_{F^t})\to 0  \label{equacao_auxiliar_6}
\end{align}
as $t \to 0$. 
The inequality in \eqref{equacao_auxiliar_6} is obtained by 
noting that $\frac{1}{2} ( \mu^t_F+\mu_{F^t}) \in \mathcal M^\varphi_1(F^t)$, so its weighted energy 
is larger than $I^\varphi(\mu_{F^t})$. Lemma \ref{lemma_vague_convergence} then implies $\mu^t_F-\mu_{F^t}\to 0$ vaguely.
\end{proof}	

An immediate consequence of Proposition \ref{proposition_derivative_energy_compact}  is
the following.

\begin{cor} \label{cor_critical_set}
The equilibrium measure $\mu^{\varphi,F}$ in the external field $\varphi$ of a $\varphi$-critical
set $F$ is a $\varphi$-critical measure.
\end{cor}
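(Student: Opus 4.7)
The plan is to observe that the corollary is essentially a one-line consequence of Proposition \ref{proposition_derivative_energy_compact} combined with the two definitions (Definition \ref{definition_critical_measures} of a $\varphi$-critical measure and Definition \ref{definition_critical_set} of a $\varphi$-critical set). The hard work has already been done in establishing Proposition \ref{proposition_derivative_energy_compact}; the corollary is really just an unpacking of definitions.

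First I would note that for the equilibrium measure $\mu^{\varphi,F}$ to be well-defined in the sense of Theorem \ref{rakhmanov_theorem_1}, and for the derivative $D_h I^\varphi(F)$ to make sense as a finite limit in \eqref{equation_derivative_energy_compact}, we need $-\infty < I^\varphi(F) < +\infty$. This is the running hypothesis implicit in the corollary: the symbol $\mu^{\varphi,F}$ only means something under this assumption, and the derivative $D_h I^\varphi(F)$ similarly requires $I^\varphi(F)$ to be finite so that the difference quotient is well-defined. Thus the hypotheses of Proposition \ref{proposition_derivative_energy_compact} are automatically available to us.

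Then I would invoke Proposition \ref{proposition_derivative_energy_compact} directly: for every test function $h \in C_c^2$,
\[
D_h I^\varphi\!\left(\mu^{\varphi,F}\right) \;=\; D_h I^\varphi(F) \;=\; 0,
\]
where the first equality is Proposition \ref{proposition_derivative_energy_compact} and the second is the assumption that $F$ is a $\varphi$-critical set (Definition \ref{definition_critical_set}). Since this holds for every $h \in C_c^2$, Definition \ref{definition_critical_measures} says exactly that $\mu^{\varphi,F}$ is a $\varphi$-critical measure, which is what we wanted.

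Since the argument is this short, there is no real obstacle — all difficulty has been absorbed into Proposition \ref{proposition_derivative_energy_compact}, whose proof required the delicate vague-convergence argument via Lemma \ref{lemma_vague_convergence}. If any subtlety remains at the level of the corollary itself, it would only be the bookkeeping observation that the limit defining $D_h I^\varphi(F)$ and the limit defining $D_h I^\varphi(\mu^{\varphi,F})$ are actually equal (rather than only both being zero), and this is precisely the content of Proposition \ref{proposition_derivative_energy_compact} that one invokes.
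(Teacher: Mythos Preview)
Your proposal is correct and matches the paper's approach exactly: the paper does not even write out a proof, simply presenting the corollary as ``an immediate consequence of Proposition \ref{proposition_derivative_energy_compact}.'' Your unpacking of the definitions is precisely the intended argument.
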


%%%%%%%%%%%%%%%%%%%%%%%%%%%%%%%%%%%%%%%%%%%%%%%%%%%%%%%%%%%%%%%%%%%%%%
\section{Proof of Theorem \ref{main_result}}\label{proof_main_theorem}
%%%%%%%%%%%%%%%%%%%%%%%%%%%%%%%%%%%%%%%%%%%%%%%%%%%%%%%%%%%%%%%%%%%%%%

In this section we are going to prove Theorem \ref{main_result}. 
Recall we are assuming that $V$ is a polynomial of degree $N$, $\varphi=\re V$ and $\mathcal T=\mathcal
T(\mathcal P)$ is the class of admissible contours associated to a fixed noncrossing partition 
$\mathcal P$, as given in Definition \ref{definition_admissible_sets}. 
This setup defines the max-min problem $(V,\mathcal T)$, see \eqref{max-min-problem}.

\subsection{The collection $\mathcal T_M$}

In the first step we are going to restrict the class of contours $\mathcal T$. The contours 
in $\mathcal T$
stretch out to infinity in certain directions in which $\varphi \to +\infty$, as specified by the
partition $\mathcal P$. However, it is not forbidden that some parts of $\Gamma \in \mathcal T$ are in regions
of the plane where $\varphi$ is very negative.  Then also $I^{\varphi}(\Gamma)$ will be very negative, and so
such $\Gamma$ will be far from optimal for the max-min problem \eqref{max-min-problem}. 

Here we are going to make this precise, and we show that for $M$ large enough we
can restrict to a certain subclass $\mathcal T_M$ of $\mathcal T$ that we are
going to define first.

For $M > 0$ large enough, the level set
\begin{equation}\label{level_sets}
	\varphi^{-1}(-M) = \{ z\in\C \mid \varphi(z)=-M\}.
\end{equation}
consists of $N$ disjoint analytic arcs, each of them stretching out to infinity in its both
ends. This is so because $\varphi$ is the real part of a polynomial of degree $N$. 
Then, if $M > 0$ is large enough,  
\begin{equation}\label{cut_off_set}
	\Delta_{M} = \varphi^{-1}(-\infty,-M) \cap \{ z\in \C \mid \dist(z, \varphi^{-1}(-M))>8\}
\end{equation}
is an open non-empty set and its boundary consists of a union of $N$ pairwise disjoint analytic arcs.
The distance in \eqref{cut_off_set} is the usual Euclidean distance
\[ \dist(z, X) = \inf_{x \in X}  |z-x|. \]
The set $\Delta_M$ is indicated by the gray regions in Figures~\ref{table_configurations} and 
\ref{level_curves}. We also assume $M$ satisfies 
\begin{equation}\label{condition_constant_M}
-M< \sup_{\Gamma\in \mathcal T}I^\varphi(\Gamma).
\end{equation}

Then for such $M$ we consider the subclass
\begin{equation}\label{definition_subclass_cut_off}
\mathcal T_M=\{\Gamma\in \mathcal T \mid \Gamma\cap \Delta_M=\emptyset\},
\end{equation}
and its closure 
\begin{equation} \label{definition_FM}
	\mathcal F_M = \overline{\mathcal T_M} 
	\end{equation}
with respect to Hausdorff metric on closed subsets of $\C$ with the hyperbolic distance.
The sets in $\mathcal F_M$ are closed subsets of $\C$, but they are not necessarily 
finite unions of contours, since this property is not preserved under taking closure in
the Hausdorff metric.

However, because of Definition \ref{definition_admissible_sets} ii),
each $F \in \mathcal F_M$ has at most $|\mathcal P_0|$ components that are all
unbounded in $\C$, since this property is preserved by taking closure in the Hausdorff metric.
Also
\begin{equation} \label{DeltaM_intersection_empty} 
F \cap \Delta_M  = \emptyset, \qquad \text{for all } F \in \mathcal F_M. 
\end{equation}

\begin{prop}\label{proposicao_fundamental}
%Every set $F\in \mathcal F_M$ is connected in $\overline \C$. Moreover,
We have
\begin{equation}\label{inequalities_energy_classes}
 \sup_{\Gamma\in \mathcal T \setminus \mathcal T_M} I^\varphi(\Gamma) <
	\sup_{\Gamma\in \mathcal T_M}I^\varphi(\Gamma)\leq 
 \sup_{F \in \mathcal F_M }I^\varphi(F) <+\infty.
\end{equation}
\end{prop}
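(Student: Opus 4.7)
My plan is to establish the three relations in Proposition \ref{proposicao_fundamental} separately. The middle inequality $\sup_{\mathcal T_M} I^\varphi \leq \sup_{\mathcal F_M} I^\varphi$ would follow immediately from the inclusion $\mathcal T_M \subset \mathcal F_M$, so only the outer two require actual work.

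For the leftmost strict inequality, my key step would be to establish the uniform bound
\[ I^\varphi(\Gamma) \le -M - \log 2 \qquad \text{for every } \Gamma \in \mathcal T \setminus \mathcal T_M. \]
Fixing $z_0 \in \Gamma \cap \Delta_M$ and letting $\Gamma_0$ be the connected component of $\Gamma$ containing $z_0$, I would use that $\Gamma_0$ is unbounded with $\varphi \to +\infty$ along its ends while $\varphi(z_0) < -M$, so by continuity $\varphi$ attains the value $-M$ somewhere on $\Gamma_0$. I would then take $\gamma$ to be the connected component of the closed set $\{z \in \Gamma_0 : \varphi(z) \le -M\}$ containing $z_0$; this is a compact continuum whose topological boundary within $\Gamma_0$ is contained in $\varphi^{-1}(-M)$. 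Since $\dist(z_0, \varphi^{-1}(-M)) > 8$ by the definition of $\Delta_M$, we have $\diam \gamma > 8$, and the classical bound $\capac K \ge (\diam K)/4$ for continua in $\C$ yields $\capac \gamma > 2$. Testing the infimum defining $I^\varphi(\gamma)$ with the Robin measure $\mu_\gamma$ of $\gamma$ and using $\gamma \subset \Gamma$ would then give
\[ I^\varphi(\Gamma) \le I^\varphi(\gamma) \le -\log\capac\gamma + \int \varphi\, d\mu_\gamma \le -\log 2 - M. \]
Combining with the standing assumption \eqref{condition_constant_M} and the decomposition $\mathcal T = \mathcal T_M \cup (\mathcal T \setminus \mathcal T_M)$, we would deduce $\sup_{\mathcal T_M} I^\varphi = \sup_{\mathcal T} I^\varphi > -M > \sup_{\mathcal T \setminus \mathcal T_M} I^\varphi$.

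For the rightmost inequality, I would combine compactness of $\mathcal F_M$ in the Hausdorff metric on closed subsets of $\overline{\C}$ with the upper semicontinuity of $I^\varphi$ provided by Theorem \ref{theorem_upper_semicontinuity}; the latter applies because every element of $\mathcal F_M$ has at most $|\mathcal P_0|$ connected components, a bound that is preserved under Hausdorff limits. Hence the supremum is attained at some $F_* \in \mathcal F_M$, and it suffices to show $I^\varphi(F_*) < +\infty$. The key geometric input is that, for $R_0$ sufficiently large depending only on $M$ and $V$, the set $\C \setminus \Delta_M$ outside $\overline{D_{R_0}}$ decomposes into $N$ disjoint corridors along the asymptotic rays $\arg z = \theta_j$. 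Consequently each component of any $\Gamma_n \in \mathcal T_M$ that reaches two distinct sectors $S_j, S_k$ must cross $\overline{D_{R_0}}$; passing to Hausdorff limits, every $F \in \mathcal F_M$ contains a continuum $K \subset \overline{D_{R_0}}$ of diameter bounded below by a positive constant depending only on $M$ and $V$. Since $\varphi$ is bounded on $\overline{D_{R_0}}$, the Robin measure of $K$ has finite weighted energy, so $I^\varphi(F_*) < +\infty$.

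The hardest parts will be the careful construction of the continuum $\gamma$ in the first estimate, which relies on a topological argument combining connectedness of $\Gamma_0$ with continuity of $\varphi$, and the verification of the corridor decomposition at infinity in the compactness argument, which ultimately rests on the asymptotic $\varphi(z) \sim \re(a_0 z^N)$ together with the constraint that contours in $\mathcal T_M$ avoid $\Delta_M$.
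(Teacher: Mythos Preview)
Your proposal is correct and follows essentially the same route as the paper. For the first inequality, both you and the paper isolate a connected subset of $\Gamma$ lying in $\{\varphi\le -M\}$ with diameter at least $8$, then test with its Robin measure to get $I^\varphi(\Gamma)\le -\log 2 - M$; your construction via the connected component of $\{z\in\Gamma_0:\varphi(z)\le -M\}$ containing $z_0$ is just a slightly more explicit version of the paper's one-line claim. One small point: your claim that this component $\gamma$ is \emph{compact} is not automatic (nothing forbids $\Gamma_0$ from having an unbounded excursion into $\{\varphi\le -M\}$), but this is harmless---either $\gamma$ is unbounded and the diameter bound is trivial, or you simply pass to a compact connected subset of diameter $>8$ before invoking the Robin measure.

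For the finiteness of $\sup_{\mathcal F_M}I^\varphi$, your corridor decomposition of $(\C\setminus\Delta_M)\setminus\overline{D_{R_0}}$ plays exactly the role of the paper's auxiliary closed curve $\gamma$ encircling the origin through $\C\setminus\Delta_M$: both devices force every component of every $F\in\mathcal F_M$ to possess a continuum of definite diameter inside a fixed disk, after which the Robin-measure test gives a uniform upper bound. Note that your appeal to compactness of $\mathcal F_M$ plus upper semicontinuity is unnecessary at this stage: the geometric argument you then sketch already yields a bound uniform over all $F\in\mathcal F_M$, so you never need to single out a maximizer $F_*$ (that step belongs to Corollary~\ref{corollary_maximizing_set}, not to this proposition).
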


\begin{proof}
% Connectivity in $\overline \C$ is preserved by Hausdorff convergence, so every 
% element in $\mathcal F_M$ is connected in $\overline \C$ because this is true for sets in $\mathcal T_M$.
The inequality in \eqref{inequalities_energy_classes} between the supremum 
over  $\mathcal T_M$ and the supremum over $\mathcal F_M$ is trivial.
\medskip

We start with the proof of the first inequality in \eqref{inequalities_energy_classes}. 
To prove this we take 
$\Gamma\in\mathcal T\setminus \mathcal T_M$. Since all connected components of $\Gamma$ are
unbounded, see item ii) of Definition \ref{definition_admissible_sets}, 
and $\Gamma$ intersects the set $\Delta_M$ from \eqref{cut_off_set}, we have that
$\Gamma$ contains a connected subset $\Gamma_0$ satisfying
$$
\Gamma_0 \subset \varphi^{-1}(-\infty,-M)\quad \text{and} \quad \diam \Gamma_0 \geq 8.
$$ 
Then $\capac(\Gamma_0) \geq \frac{\diam \Gamma}{4} \geq 2$, which means that $I(\Gamma_0) \leq \log \frac{1}{2}$, see 
for example \cite[page~138]{ransford_book}. Let $\omega$ be the equilibrium measure on $\Gamma_0$
(without external field).  Then $I(\omega) = I(\Gamma_0) \leq \log \frac{1}{2}$, and
$$ I^{\varphi}(\Gamma_0)  \leq I^{\varphi}(\omega) = I(\omega)  + \int \varphi \, d\omega \\
		\leq \log \frac{1}{2} - M,
$$
because $\Gamma_0$ is contained in the region where $\varphi < -M$.
Then by \eqref{condition_constant_M}
$$
I^\varphi(\Gamma)\leq I^\varphi(\Gamma_0) \leq \log\frac{1}{2}+\sup_{\Gamma' \in\mathcal T}I^{\varphi}(\Gamma').
$$

Since $\Gamma \in \mathcal T \setminus \mathcal T_M$ is arbitrary, and $\log \frac{1}{2} < 0$, we find 
the first inequality in \eqref{inequalities_energy_classes}.

\medskip

It remains to prove that $\sup_{F \in \mathcal F_M }I^\varphi(F)$ is finite.
To this end, we construct a closed curve $\gamma$, contained in $\C\setminus \Delta_M$ 
and intersecting all the connected components of $\partial \Delta_M$, see Figure \ref{level_curves} for a 
pictorial configuration of $\gamma$.
%\begin{center}
\begin{figure}
\begin{overpic}[scale=.6]{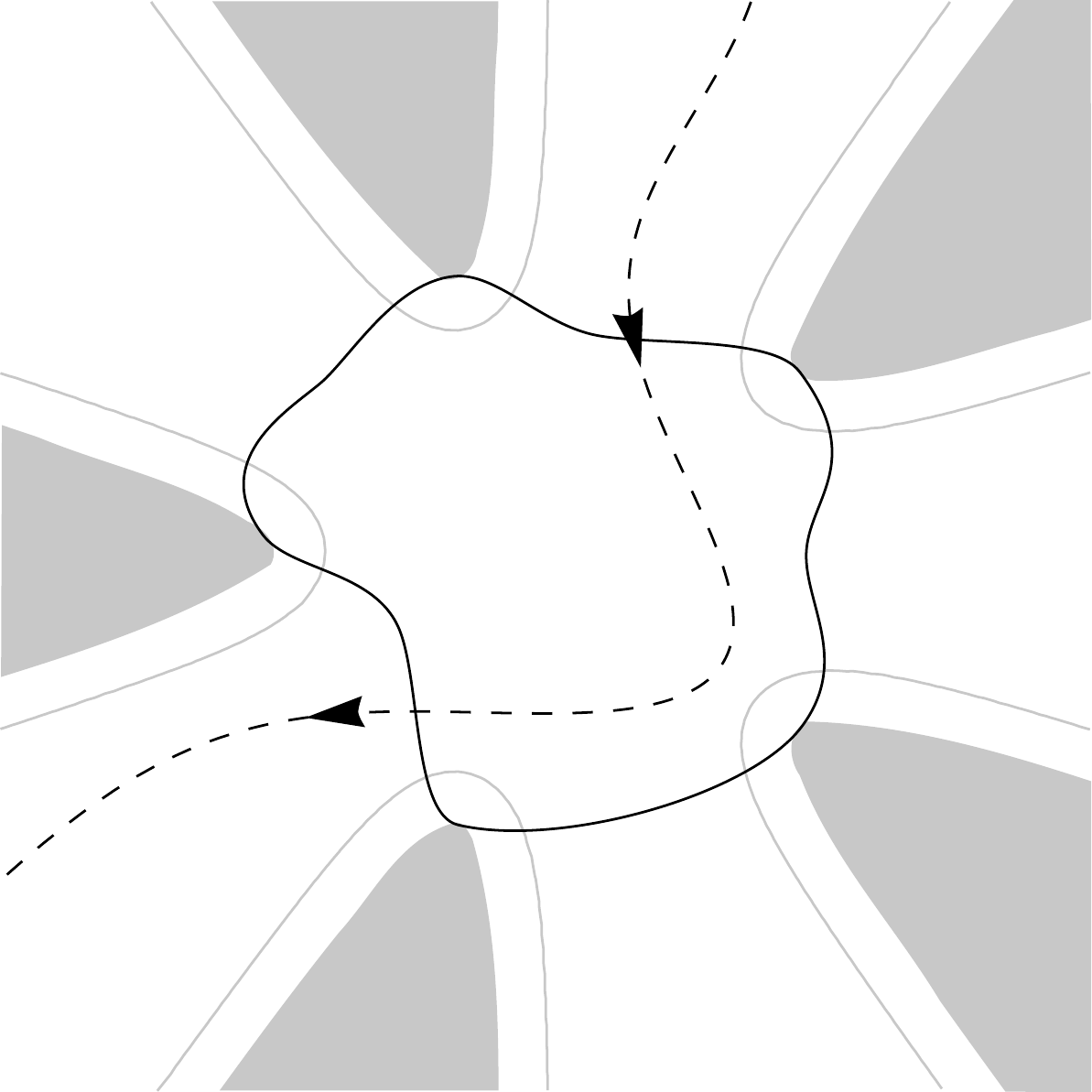}
  \put(60,126){$\gamma$}
  \put(13,45){$\Gamma_A$}
\end{overpic}
\caption{A possible choice for $\gamma$ in the proof of Proposition \ref{proposicao_fundamental} for the case $V(z)=z^5+z$.
The level curves $\varphi(z)=-M$ are shown in gray solid lines and the set $\Delta_M$
is the shaded region. Sets in $\mathcal F_M$ cannot intersect the shaded
region. The contour $\Gamma_A$, shown in black dashed line, is part of an admissible curve $\Gamma$ 
for $A=\{2,4\}$.  }\label{level_curves}
\end{figure}
%\end{center}

Recall that by item ii) of Definition \ref{definition_admissible_sets} 
the connected components of sets in $\mathcal T$ are unbounded and stretch out to infinity in at least two of
the sectors $S_1,\hdots,S_N$. Assuming in addition that $\Gamma\in \mathcal T_M$, 
that is, $\Gamma\cap \Delta_M = \emptyset$, it is easy to see that
$ \Gamma_A \cap \gamma\neq \emptyset$
for any connected component $\Gamma_A$ of $\Gamma$. This property is preserved 
by taking closures with respect to Hausdorff metric, that is, it is also
valid
\begin{equation}\label{equacao_auxiliar_21}
	F_A \cap\gamma\neq \emptyset
\end{equation}
for any connected component $F_A$ of a set $F$ belonging to $\mathcal F_M$. 

Fix $R>4$ for which $\gamma\subset D_{R-4}$. A similar reasoning as the one that led to \eqref{equacao_auxiliar_21} 
shows that 
\begin{equation}\label{equacao_auxiliar_22}
	F_A \cap \partial D_{R}\neq \emptyset,
\end{equation}
for any connected component $F_A$ of $F\in \mathcal F_M$.
From \eqref{equacao_auxiliar_21} and \eqref{equacao_auxiliar_22} we see that 
there exists a connected compact $F_0\subset F$ with
$$
F_0 \cap \gamma \neq \emptyset, \quad F_0 \cap \partial D_R \neq \emptyset, \quad
F_0\subset \overline D_{R}.
$$
This means that $\diam F_0\geq 4$, because $\gamma\subset D_{R-4}$. Then $\capac F_0 \geq 1$, which
implies $I(F_0) \leq 0$, and then
\begin{align*}
I^\varphi(F)  \leq I^\varphi (F_0) \leq I(F_0) + \sup_{z\in F_0} \varphi(z) 
	      \leq \sup_{z\in \overline D_{R}} \varphi(z) < +\infty.
\end{align*}
Since $R$ is independent of $F \in \mathcal F_M$, the 
inequality $\sup_{F \in \mathcal F_M }I^\varphi(F) <+\infty$ follows.
\end{proof}

\subsection{A maximizer set $F_0$}

The following is an immediate consequence of Proposition \ref{proposicao_fundamental}.

\begin{cor}\label{corollary_maximizing_set}
 There exists $F_0\in \mathcal F_M$ such that
\begin{equation}\label{equacao_auxiliar_20}
I^\varphi(F_0)=\sup_{F\in \mathcal F_M} I^\varphi(F).
\end{equation}
\end{cor}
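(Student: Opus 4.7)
The plan is to combine three ingredients: the compactness of $\mathcal F_M$ in the Hausdorff metric, the uniform bound on the number of connected components of sets in $\mathcal F_M$, and Rakhmanov's upper semicontinuity result (Theorem \ref{theorem_upper_semicontinuity}). Once these are in place, the statement reduces to the elementary fact that an upper semicontinuous function on a compact metric space attains its supremum, and then Proposition \ref{proposicao_fundamental} guarantees that this supremum is finite.

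First I would recall that, since the Riemann sphere $\overline{\C}$ equipped with the hyperbolic distance $\distp$ is a compact metric space, the hyperspace of its non-empty closed subsets, equipped with the Hausdorff metric \eqref{Hausdorffmetric}, is itself a compact metric space. After the natural identification of closed subsets of $\C$ with closed subsets of $\overline{\C}$ used earlier, the class $\mathcal F_M = \overline{\mathcal T_M}$ is by construction a closed subset of this compact hyperspace, hence compact.

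Next I would verify that Theorem \ref{theorem_upper_semicontinuity} applies to $\mathcal F_M$. Every $\Gamma \in \mathcal T_M \subset \mathcal T(\mathcal P)$ has at most $|\mathcal P_0|$ connected components by item ii) of Definition \ref{definition_admissible_sets}. This bound passes to Hausdorff limits: if $K_n \to K$ in the Hausdorff metric and each $K_n$ has at most $c$ components, then so does $K$. Thus every $F \in \mathcal F_M$ has at most $|\mathcal P_0|$ components, which is the uniform bound required in Theorem \ref{theorem_upper_semicontinuity}. Consequently the energy functional $I^\varphi : \mathcal F_M \to [-\infty,+\infty]$ is upper semicontinuous.

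Finally, an upper semicontinuous function on a non-empty compact space attains its supremum, so there exists $F_0 \in \mathcal F_M$ with $I^\varphi(F_0) = \sup_{F \in \mathcal F_M} I^\varphi(F)$. By the last inequality in Proposition \ref{proposicao_fundamental} this supremum is finite, and by the middle inequality combined with \eqref{condition_constant_M} it is also strictly greater than $-\infty$, so $F_0$ is a genuine maximizer. The only mildly subtle step is the preservation of the component bound under Hausdorff limits, but this is a standard hyperspace fact and poses no real obstacle; everything else is a direct concatenation of results already established in the paper.
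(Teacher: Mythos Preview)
Your proposal is correct and follows essentially the same approach as the paper's own proof, which simply observes that $\mathcal F_M$ is closed in the Hausdorff metric and then invokes Theorem~\ref{theorem_upper_semicontinuity} together with Proposition~\ref{proposicao_fundamental}. You supply more detail---in particular the compactness of the ambient hyperspace, the explicit verification of the component bound needed for Theorem~\ref{theorem_upper_semicontinuity}, and the finiteness of the supremum---but the structure of the argument is identical.
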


\begin{proof}
The class $\mathcal F_M$ is closed in the Hausdorff metric. The upper semicontinuity 
of $I^\varphi$ given by Theorem \ref{theorem_upper_semicontinuity} and Proposition
\ref{proposicao_fundamental} give the desired result.
\end{proof}

The next lemma assures that small variations of $F_0$ are still in $\mathcal F_M$.

\begin{lem}\label{lemma_small_variations}
Let $F_0 \in \mathcal F_M$ be a maximizer set given by Corollary \ref{corollary_maximizing_set}. 
For every $h\in C_c^2$, there exists $t_0 > 0$ such that $F_0^t\in\mathcal F_M$, for 
every $t \in (-t_0, t_0)$.
\end{lem}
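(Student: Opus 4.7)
My plan is to approximate $F_0$ by contours in $\mathcal T_M$, deform each by $h_t$, and pass to a Hausdorff limit. Write $K=\supp h$, which is compact. Since $h\in C^2_c$, the map $h_t\colon z\mapsto z+th(z)$ is the identity outside $K$, and for $|t|<\|Dh\|_\infty^{-1}$ it is a global $C^2$-diffeomorphism of $\C$. Extended by $h_t(\infty)=\infty$ it becomes a continuous self-map of $\overline \C$, hence uniformly continuous for the hyperbolic metric $\distp$. This continuity will transport Hausdorff convergence through the deformation at the end.

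The crucial step is to show $F_0\cap\overline{\Delta_M}=\emptyset$. Since $F_0\cap\Delta_M=\emptyset$ is built into the definition of $\mathcal F_M$, this reduces to showing $F_0\cap\partial\Delta_M=\emptyset$, and here one must use that $M$ has been chosen large enough. By Theorem \ref{rakhmanov_theorem_1} the equilibrium measure $\mu:=\mu^{\varphi,F_0}$ exists with some Euler–Lagrange constant $\ell$. Any (regular) point $z_0\in F_0\cap\partial\Delta_M$ satisfies $\varphi(z_0)<-M$, and the Euler–Lagrange inequality $U^\mu(z_0)+\tfrac12\varphi(z_0)\geq \ell$ then forces $U^\mu(z_0)>\ell+M/2$. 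Combined with the elementary bound $U^\mu(z_0)\leq -\log\dist(z_0,\supp\mu)$, this yields $\dist(z_0,\supp\mu)\leq e^{-\ell-M/2}$. On the other hand, Proposition~\ref{proposicao_fundamental} bounds $I^\varphi(F_0)$ uniformly, and standard potential-theoretic estimates for polynomial external fields confine $\supp\mu$ to a bounded set $K_0\subset\C$ whose size is independent of $M$ once $M$ is large. But $\partial\Delta_M\subset\{\varphi<-M\}$ recedes to infinity as $M\to\infty$, so choosing $M$ large enough one ensures $\partial\Delta_M$ is at Euclidean distance bounded below by some positive constant from $K_0$, contradicting the exponential bound on $\dist(z_0,\supp\mu)$.

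Given $F_0\cap\overline{\Delta_M}=\emptyset$, compactness of $F_0\cap K$ produces $\delta:=\dist(F_0\cap K,\overline{\Delta_M})>0$. Pick a sequence $(\Gamma_n)\subset\mathcal T_M$ with $\Gamma_n\to F_0$ in Hausdorff metric. For $n$ large, $\dist(\Gamma_n\cap K,\overline{\Delta_M})\geq 2\delta/3$. Choose $t_0>0$ with $t_0\|h\|_\infty<\delta/3$ and $t_0\|Dh\|_\infty<1$. For $|t|<t_0$, the diffeomorphism $h_t$ displaces each point of $K$ by less than $\delta/3$, so $\Gamma_n^t:=h_t(\Gamma_n)$ satisfies $\dist(\Gamma_n^t\cap K,\overline{\Delta_M})\geq\delta/3$, and in particular $\Gamma_n^t\cap\Delta_M=\emptyset$. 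Because $h_t\equiv\mathrm{id}$ outside $K$, $\Gamma_n^t$ is a finite union of $C^1$ Jordan arcs with exactly the same unbounded-component structure at infinity as $\Gamma_n$, so each item of Definition~\ref{definition_admissible_sets} is preserved and $\Gamma_n^t\in\mathcal T_M$. Sending $n\to\infty$ and using uniform continuity of $h_t$ on $(\overline\C,\distp)$ gives $\Gamma_n^t\to F_0^t$ in Hausdorff metric, whence $F_0^t\in\overline{\mathcal T_M}=\mathcal F_M$.

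The main obstacle is the second paragraph: balancing the competing scales — the exponentially small tolerance $e^{-\ell-M/2}$ coming from the potential estimate against the macroscopic recession of $\partial\Delta_M$ as $M$ grows — and showing the latter wins once $M$ is chosen large enough. The rest of the argument is a relatively routine combination of compactness, the diffeomorphism property of small near-identity maps, and the continuity of Hausdorff limits under a continuous self-map of the Riemann sphere.
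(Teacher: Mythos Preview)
Your overall strategy---approximate $F_0$ by $\Gamma_n\in\mathcal T_M$, deform by $h_t$, and pass to the Hausdorff limit---matches the paper's, and your third paragraph is essentially correct (modulo a minor point: Hausdorff convergence $\Gamma_n\to F_0$ does not directly give $\dist(\Gamma_n\cap K,\overline{\Delta_M})\to\dist(F_0\cap K,\overline{\Delta_M})$, since points of $\Gamma_n\cap K$ may be close to points of $F_0\setminus K$; this is fixed by replacing $K$ with a slight enlargement).

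The second paragraph, however, takes a much harder route than necessary and has real gaps. Your argument requires (i) $\supp\mu\subset K_0$ with $K_0$ independent of $M$, and (ii) freedom to enlarge $M$ afterwards. For (i), at this point in the paper $\supp\mu$ is not even known to be compact---that fact comes from Proposition~\ref{proposition_support_critical_measure}, which needs $\mu$ to be a critical measure, which in turn uses the present lemma. The ``standard potential-theoretic estimates'' you invoke would require $\varphi\to+\infty$ along $F_0$ at infinity, but $F_0\subset\C\setminus\Delta_M$ may contain unbounded pieces in the width-$8$ strip between $\varphi^{-1}(-M)$ and $\partial\Delta_M$, where $\varphi$ is not bounded below. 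For (ii), $F_0$, $\mu$, and $\ell$ all depend on $M$, so ``choosing $M$ large enough'' based on estimates involving them is circular unless you first prove those estimates are uniform in $M$---which you have not done and which essentially requires solving (i).

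The paper's argument for $F_0\cap\overline{\Delta_M}=\emptyset$ avoids all of this by simply reusing the capacity estimate from Proposition~\ref{proposicao_fundamental}. If $z_0\in F_0\cap\partial\Delta_M$, then by definition of $\Delta_M$ the ball $D_8(z_0)$ lies in $\{\varphi<-M\}$; since every component of $F_0$ is unbounded and connected, the one through $z_0$ contains a connected subset $\Gamma_0\subset\overline{D_8(z_0)}$ of diameter $\geq 8$, hence $\capac\Gamma_0\geq 2$ and $I^\varphi(F_0)\leq I^\varphi(\Gamma_0)\leq\log\tfrac12-M$. This contradicts $I^\varphi(F_0)\geq\sup_{\Gamma\in\mathcal T}I^\varphi(\Gamma)>-M$ from~\eqref{condition_constant_M}. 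No information about $\mu$ is needed, and the already-fixed $M$ suffices.
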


\begin{proof}
Since $F_0 \in \mathcal F_M$, we already observed that $F_0 \cap \Delta_M = \emptyset$, see
\eqref{DeltaM_intersection_empty}. 
The same argument that was used to prove the first inequality in Proposition 
\ref{proposicao_fundamental} shows that
\[
	F_0 \cap \overline \Delta_M = \emptyset. 
	\]

Let $h \in C_c^2$ be a test function. Since $h$ is compactly supported, it
follows that there is a $t_0 > 0$ such that
\begin{equation} \label{equacao_auxiliar_11a} 
	F_0^t \cap \overline{\Delta}_M = \emptyset \qquad \text{for all } t \in (-t_0, t_0). 
	\end{equation}
We also take $t_0$ small enough so that $x \mapsto h_t(x) = x + th(x)$ is a $C^2$-diffeomorphism 
on $\mathbb C$ for $t \in (-t_0,t_0)$.

Let $t \in (-t_0, t_0)$, and let $(\Gamma_n)$ be a sequence in $\mathcal T_M$ 
that converges to $F_0$ in Hausdorff
metric. It is then an easy fact that the convergence
\begin{equation}\label{equacao_auxiliar_11}
\Gamma_n^t \to F^t_0  \qquad \text{ as } n \to \infty
\end{equation}
in the Hausdorff metric also holds true.

Take $R > 0$ such that $\supp h \subset D_R$. 
Since $h_t$ is a diffeomorphism that leaves $\C \setminus D_R$ invariant,  we then have
\begin{equation}\label{equality_perturbed_set}
	\Gamma_n^t \cap (\C \setminus D_R) = \Gamma_n \cap( \C \setminus D_R), 
\end{equation}
for all $n$. Since $\Gamma_n \in \mathcal T_M$, it does not intersect $\Delta_M$, and
so by \eqref{equality_perturbed_set}
\begin{equation} \label{Gamma_intersection_empty1} 
	\Gamma_n^t \cap (\C \setminus D_R) \cap \Delta_M  = \emptyset. 
	\end{equation}

The convergence \eqref{equacao_auxiliar_11}, together with the fact that $F_0^t$ does not intersect
the closed set $\overline{\Delta}_M$, see \eqref{equacao_auxiliar_11a},
implies that for large enough $n$,
\begin{equation} \label{Gamma_intersection_empty2}  
	\Gamma_n^t \cap D_R \cap \Delta_M  = \emptyset. 
	\end{equation}
Then by \eqref{Gamma_intersection_empty1} and \eqref{Gamma_intersection_empty2} we have
\[ \Gamma_n^t \cap \Delta_M = \emptyset \]
for $n$ large enough, and so $\Gamma_n^t \in \mathcal T_M$ by the definition 
\eqref{definition_subclass_cut_off} of the class $\mathcal T_M$.
Because of \eqref{equacao_auxiliar_11}, we then conclude that 
$F^t_0 \in \overline{\mathcal T}_M = \mathcal F_M$. 
\end{proof}

From  Lemma \ref{lemma_small_variations} it follows that the maximizer $F_0$
is a $\varphi$-critical set. Thus by Corollary \ref{cor_critical_set} its equilibrium
measure in the external field $\varphi$ is a $\varphi$-critical measure and the 
results of Section \ref{critical_measures}
apply to this measure. Thus we obtain the following proposition that summarizes the
work we did so far.

\begin{prop}\label{characterization_critical_measures_polynomial}
 The equilibrium measure $\mu_0=\mu^{\varphi,F_0}$ of $F_0$ is $\varphi$-critical. 
There exists a polynomial $R$ such that
\begin{equation}\label{quadratic_differential_polynomial_case}
 \left(C^{\mu_0}(z)+\frac{1}{2}V'(z)\right)^2=R(z), \quad m_2 \text{-a.e.}
\end{equation}
Moreover, $\supp\mu_0$ is compact and consists of a finite union of analytic arcs, 
which are maximal trajectories of the quadratic differential $-R(z)dz^2$, with each trajectory 
connecting two distinct zeros of $R$.
The measure $\mu_0$ is absolutely continuous with respect to the arclength measure on those arcs and
\begin{equation}\label{density_maximal_measure}
d\mu_0(s)=\frac{1}{\pi i}\sqrt{R(s)} \, ds
\end{equation}
where $ds$ is the complex line element. At any point $z\in\supp\mu_0$ which is not a zero of $R$, its logarithmic potential satisfies
\begin{equation}\label{equation_gradient_potential}
\frac{\partial }{\partial n_+}\left(U^{\mu_0}+\frac{1}{2}\varphi\right)(z)=\frac{\partial}{\partial
n_-}\left(U^{\mu_0}+\frac{1}{2} \varphi\right)(z),
\end{equation}
where $n_\pm$ are the unit normal vectors to $\supp\mu_0$ at $z$ pointing in opposite directions.
\end{prop}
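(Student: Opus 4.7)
The proof should be essentially an assembly of the machinery already developed in Section~\ref{critical_measures} and earlier in this section. The plan is to first establish that the maximizer $F_0$ is a $\varphi$-critical set in the sense of Definition~\ref{definition_critical_set}, and then invoke Corollary~\ref{cor_critical_set} together with Propositions~\ref{proposition_algebraic_equation_critical_measure}, \ref{proposition_density_critical_measure}, \ref{proposition_support_critical_measure}, and \ref{proposition_s_property_critical_measures} to read off every claim in turn.

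For the criticality of $F_0$, I fix an arbitrary $h\in C^2_c$ and consider the one-parameter deformation $F_0^t=h_t(F_0)$. By Lemma~\ref{lemma_small_variations} there is some $t_0>0$ with $F_0^t\in\mathcal F_M$ for every $t\in(-t_0,t_0)$, and by the maximizing property \eqref{equacao_auxiliar_20} this yields $I^\varphi(F_0^t)\le I^\varphi(F_0)$ for all such $t$. Before differentiating one must verify the hypothesis $-\infty<I^\varphi(F_0)<+\infty$ required by Proposition~\ref{proposition_derivative_energy_compact}; the upper bound is precisely the third inequality of Proposition~\ref{proposicao_fundamental}, while the lower bound follows because $\mathcal T_M$ is non-empty and contains contours of finite weighted energy (any admissible polygonal contour avoiding $\Delta_M$ will do), combined with the condition \eqref{condition_constant_M} ensuring that the supremum on $\mathcal F_M$ exceeds $-M$. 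With finiteness in hand, Proposition~\ref{proposition_derivative_energy_compact} gives
\[
    D_hI^\varphi(F_0)=D_hI^\varphi(\mu_0),
\]
and the one-sided inequalities $\limsup_{t\to 0^+}(I^\varphi(F_0^t)-I^\varphi(F_0))/t\le 0$ and $\liminf_{t\to 0^-}(I^\varphi(F_0^t)-I^\varphi(F_0))/t\ge 0$ together with the existence of the two-sided limit force $D_hI^\varphi(F_0)=0$. Since $h\in C^2_c$ was arbitrary, $\mu_0$ is $\varphi$-critical (equivalently, $F_0$ is a $\varphi$-critical set and Corollary~\ref{cor_critical_set} applies).

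Once criticality of $\mu_0$ is established, the remaining assertions are immediate. Proposition~\ref{proposition_algebraic_equation_critical_measure} delivers the polynomial $R$ of degree $2N-2$ satisfying \eqref{quadratic_differential_polynomial_case}. Proposition~\ref{proposition_support_critical_measure}, whose proof uses the structure of $-R(z)dz^2$ near its pole of order $2N+2$ at infinity together with the finite-mass constraint on $\mu_0$, forces $\supp\mu_0$ to be a finite union of bounded maximal trajectories of $-R(z)dz^2$, each of which is necessarily a critical trajectory linking two distinct zeros of $R$. Proposition~\ref{proposition_density_critical_measure} applied with $Q=V'/2$ gives the density formula \eqref{density_maximal_measure}, and finally part~ii) of Proposition~\ref{proposition_s_property_critical_measures} yields the normal-derivative identity \eqref{equation_gradient_potential} at every regular point. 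The main (mild) obstacle is thus purely bookkeeping: verifying that the variational argument really produces the critical-measure condition for every test function, and in particular confirming that the finiteness hypothesis of Proposition~\ref{proposition_derivative_energy_compact} holds for $F_0$ so that the chain from maximizer to critical set to critical measure can be closed.
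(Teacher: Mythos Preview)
Your proposal is correct and follows exactly the route the paper takes: the paper presents this proposition as a summary of the preceding work, noting that Lemma~\ref{lemma_small_variations} makes $F_0$ a $\varphi$-critical set, Corollary~\ref{cor_critical_set} then makes $\mu_0$ a $\varphi$-critical measure, and the conclusions of Section~\ref{critical_measures} (Propositions~\ref{proposition_algebraic_equation_critical_measure}, \ref{proposition_density_critical_measure}, \ref{proposition_support_critical_measure}, \ref{proposition_s_property_critical_measures}) supply the remaining assertions. You are in fact more explicit than the paper in verifying the finiteness hypothesis of Proposition~\ref{proposition_derivative_energy_compact} and in spelling out the one-sided limit argument that forces $D_hI^\varphi(F_0)=0$.
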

Thus by \eqref{equation_gradient_potential} we have that the support of $\mu_0$ satisfies the S-property
in the external field $\varphi$.

\subsection{Proof of Theorem \ref{main_result}}  \label{proof_main_result}

Having in hands the equilibrium measure in the external field $\mu_0=\mu^{\varphi,F_0}$ of the maximizer set $F_0$ given 
by Corollary \ref{corollary_maximizing_set}, our final task is to construct a contour $\Gamma_0 \in \mathcal T$
for which $\mu^{\varphi,\Gamma_0}=\mu_0$. Note that we know that the support of $\mu_0$ consists of a finite
union of analytic arcs, but we do not know that the full set $F_0$  is a contour. There is actually
no reason why this should be the case, and so we will modify $F_0$ outside of the support of its
equilibrium measure in the external field, while preserving the equilibrium measure in the external field. 
 We also have to show that the contour $\Gamma_0$
that we obtain this way belongs to the class $\mathcal T$.

To this end, we consider the set 
\begin{equation}\label{variational_set}
\Lambda=\left\{z\in\C \mid U^{\mu_0}(z)+\frac{1}{2}\varphi(z)> l_0\right\},
\end{equation}
for the variational constant $l_0$ of $\mu_0$ appearing in the Euler-Lagrange variational
conditions \eqref{variational_condition_1} and \eqref{variational_condition_2} for $\mu_0$.

\begin{lem}\label{lemma_description_level_sets}\
%  \item[\it i)]The set $\overline\Lambda\cup\supp\mu_0$ has at most $N$ connected components, all of them unbounded.
  
For any $\epsilon > 0$, there exists $R_\epsilon>0$ such that for every $R \geq R_\epsilon$
the following holds. The set $\Lambda \setminus D_R$ has 
exactly $N$ connected components, each of them contained in precisely one of
the sectors 
\begin{equation}\label{disjoint_sets_infinity}
\left\{ z\in\C \mid |z|\geq R, |\arg z - \theta_j|\leq \frac{\pi}{2N}+\epsilon \right\},\quad j=1,\hdots,N,
\end{equation}
and containing in its interior the half-rays
\begin{equation}\label{definition_set_L_j}
L_{j,R} = \{z\in\C \mid |z|\geq R, \ \arg z=\theta_j\},\quad j=1,\hdots,N.
\end{equation}
\end{lem}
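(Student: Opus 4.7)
The plan is to analyze the function $u(z) = U^{\mu_0}(z) + \tfrac{1}{2}\varphi(z) - l_0$, so that $\Lambda = \{u > 0\}$, and to reduce the question about connected components to a one-dimensional analysis on circles $|z|=r$ for $r$ large. By Proposition \ref{characterization_critical_measures_polynomial}, $\supp \mu_0$ is compact, so $U^{\mu_0}(z) = -\log|z| + O(|z|^{-1})$ as $z \to \infty$. Combined with $V(z) = a_0 z^N + \cdots$ this gives, in polar coordinates $z = re^{i\alpha}$,
\begin{equation*}
u(re^{i\alpha}) = \tfrac{1}{2}|a_0|\, r^N \cos(N\alpha + \arg a_0) + O(r^{N-1}),
\end{equation*}
uniformly in $\alpha$, and by differentiating the corresponding expansions of $U^{\mu_0}$ and $\re V$ term by term in $\alpha$ (using that $U^{\mu_0}$ admits a Laurent expansion in $z^{-1}$ at infinity),
\begin{equation*}
\partial_\alpha u(re^{i\alpha}) = -\tfrac{N}{2}|a_0|\, r^N \sin(N\alpha + \arg a_0) + O(r^{N-1}).
\end{equation*}
The leading factor $\cos(N\alpha + \arg a_0)$ has exactly $2N$ simple zeros, located precisely at $\alpha = \theta_j \pm \pi/(2N)$ for $j=1,\ldots,N$, and the $\sin$ factor above is $\pm 1$ at each of these zeros. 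Hence, after rescaling by $r^{-N}$, the function $\alpha \mapsto u(re^{i\alpha})/r^N$ converges to $\tfrac{1}{2}|a_0|\cos(N\alpha + \arg a_0)$ uniformly in $C^1([0,2\pi))$.

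A standard perturbation argument (the $C^1$-limit has $2N$ simple zeros and is bounded away from $0$ off their neighborhoods) then yields, for any given $\epsilon > 0$, a threshold $R_\epsilon$ such that for every $r \geq R_\epsilon$ the equation $u(re^{i\alpha}) = 0$ has exactly $2N$ simple solutions in $[0, 2\pi)$, each within distance $\epsilon$ of one of the $\theta_j \pm \pi/(2N)$. Consequently, for every such $r$ the positive set $\{\alpha : u(re^{i\alpha}) > 0\}$ is a disjoint union of exactly $N$ open arcs $A_1(r), \ldots, A_N(r)$ satisfying
\begin{equation*}
\bigl\{\alpha: |\alpha - \theta_j| \leq \tfrac{\pi}{2N} - \epsilon\bigr\} \subset A_j(r) \subset \bigl\{\alpha: |\alpha - \theta_j| < \tfrac{\pi}{2N} + \epsilon\bigr\},
\end{equation*}
so in particular $\theta_j \in A_j(r)$ for every $r \geq R_\epsilon$.

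For $R \geq R_\epsilon$, I set $\Omega_j = \{re^{i\alpha} : r \geq R,\ \alpha \in A_j(r)\}$ for $j = 1, \ldots, N$. By the inclusions above these $N$ sets are contained in pairwise disjoint sectors (using $\epsilon < \pi/(2N)$), each $\Omega_j$ lies in $\{|z| \geq R,\ |\arg z - \theta_j| \leq \pi/(2N) + \epsilon\}$, and the cross-sectional description gives $\Lambda \setminus D_R = \bigsqcup_{j=1}^N \Omega_j$. Each $\Omega_j$ is path-connected: two points $r_1 e^{i\alpha_1}, r_2 e^{i\alpha_2} \in \Omega_j$ are joined by moving angularly within $A_j(r_1)$ to $r_1 e^{i\theta_j}$, radially along $L_{j,R}$ from $r_1e^{i\theta_j}$ to $r_2 e^{i\theta_j}$ (which stays in $\Omega_j$ because $\theta_j \in A_j(r)$ for every $r \geq R$), and then angularly within $A_j(r_2)$ to $r_2 e^{i\alpha_2}$; the strict inclusion $\theta_j \in A_j(r)$ also shows that $L_{j,R}$ is contained in the interior of $\Omega_j$. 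This yields the desired decomposition into $N$ connected components. The main technical step is the perturbation argument ruling out extra zeros of $\alpha \mapsto u(re^{i\alpha})$ on large circles, since any such extra zero would produce a spurious component of $\Lambda \setminus D_R$; what makes it go through is the uniformity in $\alpha$ of the $C^1$ expansion rather than merely its pointwise validity, which rules out hidden oscillations of $u$ coming from the subleading terms of $\re V$ near the critical directions $\theta_j \pm \pi/(2N)$.
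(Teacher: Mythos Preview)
Your proof is correct and takes a genuinely different route from the paper's. The paper dispatches the lemma in three lines by observing that the boundary of $\Lambda$ consists of level curves of $U^{\mu_0}+\tfrac12\varphi$, which are precisely the horizontal trajectories of the quadratic differential $-R(z)\,dz^2$ from \eqref{quadratic_differential_polynomial_case}; since this differential has a pole of order $2N+2$ at $\infty$, the structure of $\Lambda$ near infinity follows directly from Strebel's local theory of trajectories near a higher-order pole \cite[Theorem~7.4]{strebel_book}. Your approach instead works entirely in polar coordinates: you expand $u(re^{i\alpha})$ and $\partial_\alpha u(re^{i\alpha})$ to leading order in $r$, show $C^1$ convergence of the rescaled circular slices to $\tfrac12|a_0|\cos(N\alpha+\arg a_0)$, and run an elementary simple-zero perturbation argument to locate the $2N$ sign changes and hence the $N$ positive arcs on each large circle, then glue the circular slices radially along the rays $\arg z=\theta_j$.

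The paper's argument is very short but imports the nontrivial trajectory structure near a pole from Strebel's book and relies on having already identified $\partial\Lambda$ with trajectories of $-R(z)\,dz^2$ (implicitly via Proposition~\ref{characterization_critical_measures_polynomial}). Your argument is longer but completely self-contained, uses nothing beyond the compactness of $\supp\mu_0$ and the polynomial form of $V$, and in fact gives quantitative control on where the boundary curves of $\Lambda$ sit relative to the critical directions $\theta_j\pm\pi/(2N)$. Either approach is adequate here; yours has the merit of not needing the quadratic-differential machinery at this point in the argument. (One small remark: you tacitly restrict to $\epsilon<\pi/(2N)$ to get disjointness of the sectors, but the lemma for larger $\epsilon$ is immediate from the small-$\epsilon$ case, so this is harmless.)
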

Recall that the angles $\theta_j$ are given in \eqref{definition_admissible_angles} and they are
determined by the polynomial $V$.

\begin{proof}
The boundary of $\Lambda$ consists of level curves of $U^{\mu_0} + \frac{1}{2} \varphi$, and these
are exactly the trajectories of the quadratic differential $-R(z)dz^2$ given in 
\eqref{quadratic_differential_polynomial_case}. The quadratic differential has a pole
of order $2N+2$ at $z=\infty$. Lemma \ref{lemma_description_level_sets} then follows 
from the local behavior of trajectories of a quadratic differential near a pole of order $\geq 4$, 
see \cite[Theorem~7.4]{strebel_book}.
\end{proof}

Fix a number
\begin{equation}\label{choice_epsilon}
0 < \epsilon<\frac{\pi}{2N}
\end{equation}
and also $R>R_{\epsilon}$ for which
\begin{equation}\label{choice_radius}
\supp \mu_0 \subset D_{R}.
\end{equation}
For $j=1, \hdots, N$, let $\Lambda(j)$ be the connected component of $\overline{\Lambda} \cup \supp \mu_0$ for which
\begin{equation}\label{definicao_lambda_j}
 L_{j,R} \subset \Lambda(j).
\end{equation}
Then $\Lambda(j)$ is pathwise connected, and we can connect any point in $\Lambda(j)$ to infinity 
through a curve entirely contained in 
$\Lambda(j)$ and stretching out to infinity in the sector $S_j$ (see \eqref{definition_admissible_angles}
for the definition of $S_j$).

The sets $\Lambda(j)$ for $j=1, \ldots, N$, need not be disjoint. 
It can happen that $\Lambda(j)=\Lambda(k)$ for some pair of distinct
numbers $j,k$. Of course, if their intersection is non-empty then $\Lambda(j) = \Lambda(k)$.

The next lemma is the key for completing the proof of Theorem \ref{main_result}. 

\begin{lem}\label{lemma_refinement_partition}
If $A \in \mathcal P_0$ and $j,k \in A$, then $\Lambda(j)=\Lambda(k)$.
\end{lem}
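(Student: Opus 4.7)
I would argue by contradiction: suppose $\Lambda(j) \neq \Lambda(k)$ for some $j, k \in A$ with $A \in \mathcal P_0$. The key geometric input is the inclusion $F_0 \subset \overline\Lambda$, which follows because the variational inequality $U^{\mu_0} + \frac{1}{2}\varphi \geq l_0$ on $F_0$ (which holds quasi-everywhere for the equilibrium measure $\mu_0 = \mu^{\varphi, F_0}$) extends pointwise, using the continuity of $U^{\mu_0} + \frac{1}{2}\varphi$ on $\C$ that comes from the absolute continuity of $\mu_0$ with bounded density on its support, cf.\ \eqref{density_maximal_measure}.

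The main step is to construct a closed simple arc $\sigma \subset \C$, homeomorphic to $\R$ with both ends escaping to infinity, satisfying:
\begin{enumerate}[(a)]
\item $\sigma \subset \C \setminus \overline\Lambda$;
\item $L_{j, R}$ and $L_{k, R}$ lie in different connected components of $\C \setminus \sigma$, with the strict sub-sectors $S_j^-, S_k^-$ at large radii on opposite sides of $\sigma$;
\item for some $R_0 > 0$, one has $\sigma \cap \{|z| \geq R_0\} \subset \Delta_M$.
\end{enumerate}
The existence of $\sigma$ rests on the $N$-strand description in Lemma \ref{lemma_description_level_sets}: the complement $\C \setminus \overline\Lambda$ contains, at large radii, $N$ unbounded strands in the regions between consecutive sectors $S_i$ where $\varphi \to -\infty$. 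Since $\Lambda(j) \neq \Lambda(k)$, I can route $\sigma$ through two such strands straddling the pair $\{j, k\}$ in the cyclic order, obtaining (a) and (b); pushing each tail of $\sigma$ deep into the middle of its strand, where $\varphi$ is very negative and far from the level curve $\varphi^{-1}(-M)$, then yields (c).

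Given $\sigma$, the conclusion follows quickly. Since $F_0 \in \mathcal F_M = \overline{\mathcal T_M}$, choose $\Gamma_n \in \mathcal T_M$ with $\Gamma_n \to F_0$ in Hausdorff metric. By item iii) of Definition \ref{definition_admissible_sets}, each $\Gamma_n$ has a connected component $\Gamma_{n, A}$ stretching out to infinity in every $S_i$ with $i \in A$. Property (b) forces $\Gamma_{n, A}$ to have points on both sides of $\sigma$ at arbitrarily large radii, so by connectedness there exists $y_n \in \Gamma_{n, A} \cap \sigma$. If $|y_n| \to \infty$ along a subsequence, then eventually $y_n \in \Delta_M$ by (c), contradicting $\Gamma_n \cap \Delta_M = \emptyset$. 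Hence $(y_n)$ is bounded and has a convergent subsequence $y_n \to y^* \in \sigma$ (as $\sigma$ is closed in $\C$), with $y^* \in F_0$ by Hausdorff convergence. But $F_0 \subset \overline\Lambda$ and $\sigma \cap \overline\Lambda = \emptyset$, so $y^* \in \emptyset$, the desired contradiction.

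The hardest step will be the construction of $\sigma$ satisfying (a)--(c) simultaneously. One must reconcile the global topology of $\overline\Lambda$ (which determines when $L_{j, R}$ and $L_{k, R}$ can be separated inside $\C \setminus \overline\Lambda$, and which pair of sectors can carry the tails of $\sigma$) with the quantitative requirement (c) on the asymptotics of those tails inside $\Delta_M$; once $\sigma$ is in hand, the subsequent limiting argument is essentially routine.
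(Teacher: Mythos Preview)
Your strategy is different from the paper's and is conceptually appealing, but it has two real problems.

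First, the inclusion $F_0 \subset \overline\Lambda$ is false as stated. Write $g = U^{\mu_0} + \tfrac12\varphi - l_0$. On an interior point $z$ of an arc of $\supp\mu_0$ the S-property together with the jump relation $\partial_{n_+}U^{\mu_0}+\partial_{n_-}U^{\mu_0}=-2\pi\,\tfrac{d\mu_0}{d|s|}$ gives $\partial_{n_\pm}g(z)=-\pi\,\tfrac{d\mu_0}{d|s|}(z)<0$, so $g<0$ on both sides of the arc near $z$; hence $z\notin\overline{\{g>0\}}=\overline\Lambda$. Since $\supp\mu_0\subset F_0$, you only get $F_0\subset\overline\Lambda\cup\supp\mu_0=\{g\ge 0\}$, which is exactly what the paper uses. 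This is easy to repair: replace $\overline\Lambda$ by $\overline\Lambda\cup\supp\mu_0$ throughout; this is also the set whose components define the $\Lambda(i)$.

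The serious gap is step (a)--(c): you need an arc $\sigma\subset\{g<0\}$ with both ends at infinity that separates $\Lambda(j)$ from $\Lambda(k)$, and you have not produced one. The assumption $\Lambda(j)\neq\Lambda(k)$ by itself does not hand you two anti-sector ``strands'' that are joined inside $\{g<0\}$ and straddle $j,k$; one has to prove that some component of $\{g<0\}$ actually separates $\Lambda(j)$ from $\Lambda(k)$ in $\C$. This is a genuine planar-topology statement about how the components of $\{g\ge 0\}$ and $\{g<0\}$ interlace (essentially a Kreweras-duality of the two induced noncrossing partitions of the $2N$ asymptotic directions), and it is not a one-line consequence of Lemma~\ref{lemma_description_level_sets}. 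Without it your contradiction never gets off the ground. By contrast, the paper's proof avoids this issue entirely: it works directly with approximating contours $\Gamma_n\to F_0$, extracts from each $\Gamma_n$ a simple sub-arc joining $S_j$ to $S_k$, records its successive crossings of finitely many fixed circular arcs $\alpha_i\subset\partial D_R\setminus\Delta_M$, and passes to Hausdorff limits of the bounded pieces between crossings. Each limit piece is a connected subset of $F_0\subset\overline\Lambda\cup\supp\mu_0$ touching two of the $\alpha_i$, which chains the $\Lambda(j_i)$ together and gives $\Lambda(j)=\Lambda(k)$ without any separating-curve construction.
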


The proof of Lemma \ref{lemma_refinement_partition} is postponed to Section \ref{proof_lemma_refinement_partition}. 
We first show how to complete the proof of Theorem \ref{main_result}, assuming the lemma.

\begin{proof}[\it Proof of Theorem \ref{main_result}]
Let $\epsilon$ and $R > R_{\epsilon}$ be given as in \eqref{choice_epsilon} and \eqref{choice_radius}.
For the angles $\theta_1,\hdots,\theta_N$ in \eqref{definition_admissible_angles}, denote 
$$
z_j=Re^{i\theta_j},\quad j=1,\hdots, N.
$$

From \eqref{choice_radius} and Lemma \ref{lemma_description_level_sets} we see that $z_j\in \Lambda$.
Given $A \in \mathcal P_0$, we have by Lemma \ref{lemma_refinement_partition}
that the points $z_j$ with $j\in A$, belong to the
same connected component of $\overline\Lambda \cup \supp\mu_0$. Then
there exists a connected set $\gamma_A \subset
\overline\Lambda\cup\supp\mu_0$ which is a finite union of bounded $C^1$ Jordan arcs satisfying
$$
\quad z_j\in\gamma_A, \quad \text{for every } j\in A.
$$

For $L_{j,R}$ given by \eqref{definition_set_L_j}, define
$$
\Gamma_A=\gamma_A \cup \bigcup_{j\in A} L_{j,R}.
$$
Each $\Gamma_A$ is then a finite union of $C^1$ arcs. $\Gamma_A$ is connected and 
$\Gamma_A$ stretches out to infinity in each sector $S_j$ with $j\in A$. The union
$$
\Gamma_0=\bigcup_{A\in\mathcal P_0} \Gamma_A
$$
then satisfies all the requirements in Definition \ref{definition_admissible_sets}
and it follows that
\[ \Gamma_0 \in \mathcal T(\mathcal P). \]
Hence
\begin{equation}\label{equacao_auxiliar_18}
	I^\varphi(\Gamma_0)\leq \sup_{\Gamma\in \mathcal T(\mathcal P)} I^\varphi(\Gamma).
\end{equation}

Since $\Gamma_0\subset \Gamma_0\cup\supp\mu_0$, we also have that
\begin{equation}\label{equacao_auxiliar_19}
I^\varphi(\Gamma_0\cup\supp\mu_0)\leq I^\varphi(\Gamma_0).
\end{equation}

On the other hand, since $\Gamma_0 \subset \overline\Lambda\cup\supp\mu_0$,  
the variational conditions \eqref{variational_condition_1}, \eqref{variational_condition_2} 
are valid for $\Gamma_0\cup \supp\mu_0$, which means that $\mu_0$ is the
equilibrium measure of $\Gamma_0\cup \supp\mu_0$ in the external field $\varphi$. Thus 
\begin{equation} \label{equacao_auxiliar_19b}
I^\varphi(\Gamma_0\cup \supp\mu_0)  = I^\varphi(\mu_0) = I^\varphi(F_0), 
\end{equation}
since $\mu_0$ is the equilibrium measure in the external field of $F_0$ as well.
Then we recall that $F_0$ is the maximizer of $I^{\varphi}$ over the class $\mathcal F_M$ which
contains $\mathcal T_M$. Then by Proposition \ref{proposicao_fundamental}
\begin{equation}\label{equacao_auxiliar_19c}
	I^{\varphi}(F_0)  \geq \sup_{\Gamma\in \mathcal T_M}I^\varphi(\Gamma) = \sup_{\Gamma\in \mathcal T}I^\varphi(\Gamma)
\end{equation}
Combining the inequalities in \eqref{equacao_auxiliar_18}--\eqref{equacao_auxiliar_19c}, we finally get
\begin{equation} \label{equacao_auxiliar_19d}
I^\varphi(\Gamma_0) \leq \sup_{\Gamma \in \mathcal T} I^{\varphi}(\Gamma) 
	\leq I^{\varphi}(F_0) =  I^\varphi(\Gamma_0\cup \supp\mu_0) \leq I^{\varphi}(\Gamma_0)
\end{equation}
and so equality holds throughout in \eqref{equacao_auxiliar_19d}.

Let $\mu_1 = \mu^{\varphi,\Gamma_0}$ be the equilibrium measure in the external field $\varphi$ of $\Gamma_0$.
Then because of the equalities in \eqref{equacao_auxiliar_19d}, we have
\[ I^{\varphi}(\mu_1) = I^\varphi(\Gamma_0) = I^\varphi(\Gamma_0\cup \supp\mu_0) \]
Thus $\mu_1$ is a measure supported on $\Gamma_0 \subset \Gamma_0 \cup \supp\mu_0$ whose
energy in external field $\varphi$ is equal to the minimal energy for measures on $\Gamma_0 \cup \supp \mu_0$.
By uniqueness of equilibrium measure it follows that $\supp \mu_0 \subset \Gamma_0$
and $\mu_0 = \mu_1$ is the equilibrium measure in external field $\varphi$ of $\Gamma_0$.

Then $\Gamma_0 \in \mathcal T$ has the S-property in external field $\varphi$, see \eqref{equation_gradient_potential}
in Proposition \ref{characterization_critical_measures_polynomial}.
Also the remaining statements of Theorem \ref{main_result} follow from 
Proposition \ref{characterization_critical_measures_polynomial}.
This completes the proof of Theorem \ref{main_result}, assuming the validity of Lemma \ref{lemma_refinement_partition}.
\end{proof}

\subsection{Proof of Lemma \ref{lemma_refinement_partition}} \label{proof_lemma_refinement_partition}

What remains is the proof of Lemma \ref{lemma_refinement_partition}.

\begin{proof}[Proof of Lemma \ref{lemma_refinement_partition}]

We take $A \in \mathcal P_0$ and $j, k \in A$ with $j \neq k$ and our task
is to prove that $\Lambda(j) = \Lambda(k)$. 

In the proof we make use of the maximizer $F_0 \in \mathcal F_M$ and we know that
$\mu_0$ is the equilibrium measure of $F_0$ in the external field $\varphi$.
Thus by the variational conditions \eqref{variational_condition_1}, \eqref{variational_condition_2},
$$
F_0 \subset \{ z \in \mathbb C \mid U^{\mu_0}(z) + \frac{1}{2} \varphi(z) \geq l_0 \}. 
$$

If $z \in F_0 \setminus \supp \mu_0$ then $U^{\mu_0} + \frac{1}{2} \varphi$ is harmonic
near $z$, and so cannot have a local maximum at $z$.  It follows that 
\begin{equation} \label{equation_in_Lemma_1} 
	F_0 \subset \overline{\Lambda} \cup \supp \mu_0. 
	\end{equation}
where we recall that $\Lambda$ is defined in \eqref{variational_set}.

Since $\mathcal F_M$ is the closure of $\mathcal T_M$, there exists a sequence
$(\Gamma_n)_n$ in $\mathcal T_M$ such that
\begin{equation} \label{equation_in_Lemma_2} 
	\Gamma_n \to F_0 \quad \text{ in Hausdorff metric.} 
	\end{equation}

Let $n \in \mathbb N$. From item iii) of Definition \ref{definition_admissible_sets} it follows that
$\Gamma_n$ has a connected component that stretches out to infinity in the sectors
$S_j$ and $S_k$. Then  by dropping parts of that component that go to infinity
in other sectors, and parts that make loops, we can find a subset $\widetilde{\Gamma}_{n} \subset \Gamma_n$ 
that is a simple piecewise $C^1$ contour that goes from infinity
in the sector $S_j$ to infinity in the sector $S_k$. Here simple means that it has no points of self-intersection.
We consider $\widetilde{\Gamma}_n$ as an oriented contour, and we talk about
points on the contour that lie before or after other points.
From the definition of $\mathcal T_M$, see \eqref{definition_subclass_cut_off},
we know that 
\begin{equation} \label{equation_in_Lemma_3} 
	\widetilde{\Gamma}_n \cap \Delta_M  = \emptyset. 
	\end{equation}
The region $\Delta_M$ consists of $N$ connected components, all of them stretching out to infinity, 
as shown with the gray shaded regions in Figure \ref{level_curves} for the case $N = 5$.

%\begin{center}
\begin{figure}
\begin{overpic}[scale=.6]{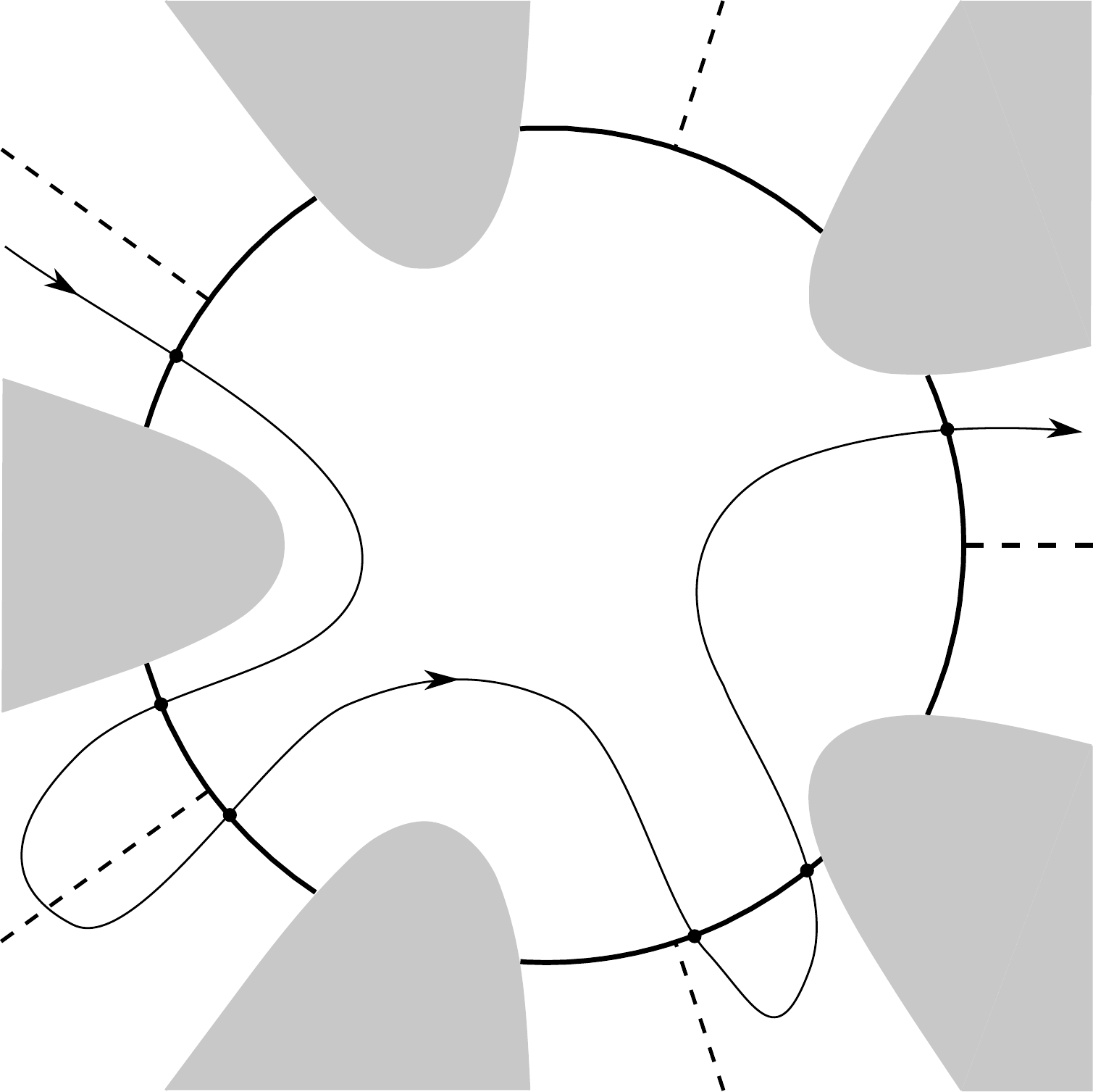}
  \put(250,120){$L_{1,R}$}
  \put(175,250){$L_{2,R}$}
  \put(03,230){$L_{3,R}$}
  \put(03,25){$L_{4,R}$}
  \put(175,05){$L_{5,R}$}
  \put(230,107){$\alpha_1$}
  \put(180,220){$\alpha_2$}
  \put(50,205){$\alpha_3$}
  \put(60,45){$\alpha_4$}
  \put(137,23){$\alpha_5$}
  \put(240,162){$\widetilde\Gamma_n$}
  \put(45,175){$\xi_{1,n}$}
  \put(43,88){$\eta_{1,n}$}
  \put(61,61){$\xi_{2,n}$}
  \put(172,32){$\eta_{2,n}$}
  \put(175,55){$\xi_{3,n}$}
  \put(208,160){$\eta_{3,n}$}
\end{overpic}
\caption{Pictorial example for the proof of Lemma \ref{lemma_refinement_partition}, considering 
a polynomial $V$ of degree $5$. The shaded region is $\Delta_M$. The black thick
arcs belonging to a circle are the curves $\alpha_i$. The oriented contour is an example of 
$\widetilde \Gamma_n$ for some $\Gamma_n$ in
\eqref{equation_in_Lemma_2}. The points $\xi_{i,n},\eta_{i,n}$, $j=1,\hdots,l$ are also 
represented in the figure, in this case with value $l=3$.}\label{level_curves_7}
\end{figure}
%\end{center}

Let $R$ be large enough such that the circle $|z| = R$ intersects each of the components
of $\Delta_M$ along a circular arc. Then there are also $N$ circular subarcs of $|z|= R$
outside of $\Delta_M$, that we call $\alpha_1, \ldots, \alpha_N$ where $\alpha_i$
is in the direction of the sector $S_i$, for $i = 1, \ldots, N$. By taking $R$ large enough,
we can also make sure that
\begin{equation} \label{equation_in_Lemma_4}
	\{ z \in \alpha_i  \mid U^{\mu_0}(z) + \frac{1}{2} \varphi(z) \geq l_0 \} \subset \Lambda(i)
	\end{equation}
for each $i$, where $\Lambda(i)$ is as in \eqref{definicao_lambda_j}.

We now follow the contour  $\widetilde{\Gamma}_n$ starting at infinity in the sector $S_j$.
It will meet the arc $\alpha_j$. We let $\xi_{1,n}$ be the \emph{last point} on $\widetilde{\Gamma}_n$
that is on $\alpha_j$. Then $\widetilde{\Gamma}_n$ enters into $D_R \setminus \Delta_M$
and never returns to $\alpha_j$.
Since it ends up at infinity again, the contour has to leave $D_R \setminus \Delta_M$ again, and it will
do so along  one of the arcs $\alpha_i$, with $i \neq j$. 
We let $\eta_{1,n}$ be the \emph{first point} after $\xi_{1,n}$ which is on one of the $\alpha_i$ again,
say $\eta_{1,n} \in \alpha_{j_1}$.
If $j_1 = k$, then we stop. If $j_1 \neq k$, then we continue and we let
$\xi_{2,n}$ be the \emph{last point} on $\widetilde{\Gamma}_n$ that is on $\alpha_{j_1}$.
After $\xi_{2,n}$ the contour is going into $D_R \setminus \Delta_M$ again, since the contour
has to end in the sector $S_k$ and $j_1 \neq k$. The contour will meet one of the $\alpha_i$ again,
and we let $\eta_{2,n}$ be the \emph{first point} after $\xi_{2,n}$ which is on one of the $\alpha_i$,
say $\eta_{2,n} \in \alpha_{j_2}$. Then $j_2 \neq j$ and $j_2 \neq j_1$, since $\xi_{1,n}$
is the last point on $\alpha_j$ and $\xi_{2,n}$ is the last point on $\alpha_{j_1}$, and $\eta_{2,n}$
comes after these two points. If $j_2 = k$ then we stop, and otherwise we continue, see 
Figure \ref{level_curves_7} for an illustration.

Continuing this process, we find a sequence of distinct indices 
$j_0, j_1, \ldots, j_l$ for some $l \in \{1, \ldots, N-1\}$,
where 
\[ j_0 = j, \qquad j_l = k, \] 
and points $\xi_{1,n}, \eta_{1,n}, \ldots, \xi_{l,n}, \eta_{l,n}$
on the contour $\widetilde{\Gamma}_n$ with
\begin{equation} \label{equation_in_Lemma_5} 
	\xi_{i,n} \in \alpha_{j_{i-1}}, \quad \text{ and } \quad \eta_{i,n} \in \alpha_{j_i} 
	\qquad \text{for } i = 1, \ldots, l. 
	\end{equation}
Furthermore, if we use $\widetilde{\Gamma}_n(\xi, \eta)$ to denote the 
part of $\widetilde{\Gamma}_n$ lying strictly between two points $\xi$ and $\eta$ of the contour,
then 
\begin{equation} \label{equation_in_Lemma_6} 
	\widetilde{\Gamma}_n(\xi_{i,n}, \eta_{i,n}) \subset D_R \setminus \Delta_M 
	\qquad \text{for } i = 1, \ldots, l. 
	\end{equation}

The numbers $l,j_1,\hdots,j_{l-1}$ depend on the contour $\widetilde{\Gamma}_n$ and so they depend on $n$. But these numbers are always positive integers smaller than or equal to
$N$, so there are just finite possible choices for them. By passing to a subsequence of the sequence $(\Gamma_n)$, we may assume they are independent of $n$.

The set
\[ \widetilde{\Gamma}_n[\xi_{i,n}, \eta_{i,n}] = \widetilde{\Gamma}_n(\xi_{i,n}, \eta_{i,n}) \cup \{\xi_{i,n}, \eta_{i,n}\} \]
is a connected closed subset of $ \widetilde{\Gamma}_n$ lying in the compact set $\overline{D_R} \setminus \Delta_M$.
By compactness of the Hausdorff metric, there is a subsequence, say $N_1 \subset \mathbb N$, such that
the limit
\begin{equation} \label{equation_in_Lemma_7} 
	F_i = \lim_{n \to \infty, n \in N_1} \widetilde{\Gamma}_n[\xi_{i,n}, \eta_{i,n}].  
	\end{equation}
exists in the Hausdorff metric for every $i = 1, \ldots, l$. 

The limit $F_i$ is connected, because this property is preserved by taking limits in the
Hausdorff metric if all the sets involved are contained in a large, but fixed, compact set of $\C$, in this case $\overline D_R$, see \eqref{equation_in_Lemma_6}.
Also
\begin{equation} \label{equation_in_Lemma_8} 
	F_i \subset F_0 \qquad \text{for } i = 1, \ldots, l, 
	\end{equation}
as follows from \eqref{equation_in_Lemma_2} and the fact $\widetilde{\Gamma}_n[\xi_{i,n}, \eta_{i,n}] \subset \Gamma_n$.

By compactness of $\alpha_i$ we may also assume (by taking a further subsequence, if necessary)
that the sequences $(\xi_{i,n})_n$ and $(\eta_{i,n})_n$ for $i = 1,\ldots, l$
converge along the same subsequence $N_1$, say
\[ \xi_i = \lim_{n \to \infty, n \in N_1} \xi_{i,n},
	\qquad \eta_{i} = \lim_{n \to \infty, n \in N_1} \eta_{i,n}, \quad \text{ for } i = 1, \ldots, l. \]
	
Then by \eqref{equation_in_Lemma_5} and \eqref{equation_in_Lemma_7} 
\begin{equation} \label{equation_in_Lemma_9} 
	\xi_i \in F_i \cap \alpha_{j_{i-1}}, \quad \text{ and } \quad \eta_i \in F_i \cap \alpha_{j_i}, 
	\end{equation}
for every $i = 1, \ldots, l$.
From \eqref{equation_in_Lemma_9} and \eqref{equation_in_Lemma_4} it follows that
\[ \xi_i \in \Lambda(j_{i-1}) \qquad \eta_i \in \Lambda(j_i). \]

Each $F_i$ is a connected subset of $F_0$ and by \eqref{equation_in_Lemma_1} and \eqref{equation_in_Lemma_9} 
we see that $\xi_i$ and $\eta_i$ belong to the same connected component of $\overline{\Lambda} \cap \supp \mu_0$,
which means that
\[ \Lambda(j_{i-1}) = \Lambda(j_i) \quad \text{ for  } i = 1, \ldots, l. \]

Thus $\Lambda(j_0) = \Lambda(j_l)$ which gives us $\Lambda(j) = \Lambda(k)$
as required.
\end{proof}

\bibliographystyle{amsplain}

\begin{thebibliography}{10}

\bibitem{alvarez_alonso_medina_s_curves}
G.~\'Alvarez, L.~Mart\'inez~Alonso, and E.~Medina, \emph{Determination
  of {S}-curves with applications to the theory of non-Hermitian orthogonal
  polynomials}, J. Stat. Mech. Theory Exp.,  \textbf{2013} (2013), no.~06, P06006.

\bibitem{ameur_hedenmalm_makarov}
Y.~Ameur, H.~Hedenmalm, and N.~Makarov, \emph{Random normal matrices and Ward identities}, 
preprint arXiv:1109.5941,  
Annals  Prob., to appear. 


\bibitem{aptekarev_arvesu_meixner}
A.~Aptekarev and J.~Arves\'u, \emph{Asymptotics for multiple Meixner polynomials},
J. Math. Anal. Appl. 411 (2014), 485--505. 

\bibitem{aptekarev_bleher_kuijlaars}
A.~I.~Aptekarev, P.~M.~Bleher, and A.~B.~J.~Kuijlaars, 
\emph{Large $n$ limit of Gaussian random matrices with external source II},
Comm. Math. Phys. 259 (2005), 367--389. 

\bibitem{aptekarev_kuijlaars_vanassche}
A.~I.~Aptekarev, A.~B.~J.~Kuijlaars, and W.~Van Assche,
\emph{Asymptotics of Hermite-Pad\'e rational approximants for two analytic functions with 
separated pairs of branch points (case of genus 0)},
Int. Math. Res. Pap. IMRP 2008, Art. ID rpm007, 128 pp. 

\bibitem{aptekarev_lysov_tulyakov}
A.~I.~Aptekarev, V.~G.~Lysov, and D.~N.~Tulyakov,
Random matrices with an external source and the asymptotics of  multiple orthogonal polynomials,
Mat. Sb. 202 (2011), no. 2, 3--56.
English translation in Sb. Math. 202 (2011), 155--206. 

\bibitem{atia_martinez_martinez_thabet}
M.J.~Atia, A.~Martínez-Finkelshtein, P.~Mart\'inez-González, and F.~Thabet,
\emph{Quadratic differentials and asymptotics of Laguerre
polynomials with varying complex parameters}, preprint
arXiv:1311:0372.

\bibitem{bergkvist_rullgard}
T.~Bergkvist and H.~Rullg{\aa}rd,  
\emph{On polynomial eigenfunctions for a class of differential operators},
Math. Res. Lett. \textbf{9} (2002), 153--171. 

\bibitem{bertola_boutroux}
M.~Bertola, \emph{Boutroux curves with external field: equilibrium measures
  without a variational problem}, Anal. Math. Phys. \textbf{1} (2011), 167--211.

\bibitem{bertola_mo_spinor}
M. Bertola and M. Y. Mo, \emph{Commuting difference operators, spinor bundles and the
              asymptotics of orthogonal polynomials with respect to varying
              complex weights}, Adv. Math. 220 (2009), 154--218.
   
\bibitem{bertola_tovbis_quartic_weight}
M.~Bertola and A.~Tovbis, \emph{Asymptotics of orthogonal polynomials
  with complex varying quartic weight: global structure, critical point
  behaviour and the first Painlev\'e equation}, preprint arXiv:1108.0321.

\bibitem{bleher_delvaux_kuijlaars} 
P.~Bleher, S.~Delvaux, and A.~B.~J.~Kuijlaars,
\emph{Random matrix model with external source and a constrained vector equilibrium problem},
Comm. Pure Appl. Math. 64 (2011),  116--160. 

\bibitem{cegrell_kolodziej_levenberg}
U.~Cegrell, S.~Kolodziej, and N.~Levenberg, \emph{Two problems on potential
  theory for unbounded sets}, Math. Scand. \textbf{83} (1998), no.~2, 265--276.

\bibitem{claeys_wielonsky}
T.~Claeys and F.~Wielonsky, \emph{On sequences of rational interpolants of the
  exponential function with unbounded interpolation points}, 
	J. Approx. Theory \textbf{171} (2013), 1--32.

\bibitem{deano_orthogonal_polynomials_bounded_interval}
A.~Deaño
\emph{Large degree asymptotics of orthogonal polynomials with respect to an oscillatory weight on a bounded interval}, preprint
arXiv:1402.2085.
	
\bibitem{deano_huybrechs_kuijlaars}
A.~Dea\~no, D.~Huybrechs, and A.~B.~J.~Kuijlaars, \emph{
Asymptotic zero distribution of complex orthogonal
polynomials associated with Gaussian quadrature},
J. Approx. Theory 162 (2010) 2202--2224.

	
\bibitem{deift_kriecherbauer_mclaughlin}
P.~Deift, T. Kriecherbauer, and K.T-R. McLaughlin,
\emph{New results on the equilibrium measure for logarithmic potentials in the presence of an 
external field}, J. Approx. Theory \textbf{95} (1998), 388--475. 

\bibitem{duits_painleve_kernels}
M.~Duits, \emph{Painlev\'e kernels in Hermitian matrix models},
  Constr. Approx. 39 (2014), 173--196. 
	
\bibitem{duits_kuijlaars_mo}
M.~Duits, A.~B.~J.~Kuijlaars, and M.~Y.~Mo,
\emph{The Hermitian two matrix model with an even quartic potential},
 Mem. Amer. Math. Soc. 217 (2012), no. 1022, v+105 pp. 


\bibitem{mclaughlin_ercolani_loop_equations}
N.~M.~Ercolani and K.~D.~T-R McLaughlin, \emph{A quick derivation of the loop
  equations for random matrices}, Probability, geometry and integrable systems,
  Math. Sci. Res. Inst. Publ., vol.~55, Cambridge Univ. Press, Cambridge, 2008,
  pp.~185--198.

\bibitem{gonchar_rakhmanov_rato_rational_approximation}
A.~A.~Gonchar and E.~A.~Rakhmanov, \emph{Equilibrium distributions and the rate
  of rational approximation of analytic functions}, Mat. Sb. (N.S.)
  \textbf{134(176)} (1987), no.~3, 306--352.  English
	 translation in Math. USSR-Sb. \textbf{62} (1989), no.~2, 305--348.

\bibitem{Greene_Krantz_book}
R.~E.~Greene and S.~G.~Krantz, \emph{Function theory of one complex
  variable}, third ed., Graduate Studies in Mathematics, vol.~40, American
  Mathematical Society, Providence, RI, 2006. 
  

\bibitem{shapiro_holst}  
T.~Holst and B.~Shapiro, 
\emph{On higher {H}eine-{S}tieltjes polynomials},
Israel J. Math. 183 (2011), 321--345.
  
  
\bibitem{huybrechs_lejon_kuijlaars}
D.~Huybrechs, A.~B.~J.~Kuijlaars, and N.~Lejon \emph{
Zero distribution of complex orthogonal polynomials with respect to exponential weights}, preprint
  arXiv:1312.4376.

\bibitem{kamvissis_rakhmanov_energy_maximization}
S.~Kamvissis and E.~A.~Rakhmanov, \emph{Existence and regularity
  for an energy maximization problem in two dimensions}, J. Math. Phys.
  \textbf{46} (2005),  083505, 24. 

\bibitem{kuijlaars_survey} 	
A.~B.~J.~Kuijlaars,
\emph{Multiple orthogonal polynomials in random matrix theory}, 
in: Proceedings of the International Congress of Mathematicians. Volume III,
(R.~Bhatia ed.), Hindustan Book Agency, New Delhi, India, 2010,
pp. 1417--1432.


\bibitem{Landkof_book}
N.~S. Landkof, \emph{Foundations of Modern Potential Theory}, Springer-Verlag,
  New York, 1972, Translated from the Russian by A. P. Doohovskoy, 
  Grundlehren der mathematischen Wissenschaften, Band 180.

\bibitem{martinez_orive_jacobi_single_contour}
A.~Mart{\'{\i}}nez-Finkelshtein and R.~Orive, \emph{Riemann-{H}ilbert analysis
  of {J}acobi polynomials orthogonal on a single contour}, J. Approx. Theory
  \textbf{134} (2005), no.~2, 137--170. 

\bibitem{martinez_rakhmanov}
A.~Mart{\'{\i}}nez-Finkelshtein and E.~A. Rakhmanov, \emph{Critical measures,
  quadratic differentials, and weak limits of zeros of {S}tieltjes
  polynomials}, Comm. Math. Phys. \textbf{302} (2011), no.~1, 53--111.
  

\bibitem{martinez_rakhmanov_suetin_variation_equilibrium_energy}
A.~Mart{\'{\i}}nez-Finkelshtein, E.~A. Rakhmanov, and S.~P. Suetin, \emph{Variation
  of equilibrium energy and the {S}-property of a stationary compact set},
  Mat. Sb. \textbf{202} (2011),  113--136. 
	English 	transl. in Sb. Math. \textbf{202} (2011), 1831--1852.
	

\bibitem{mattner_singular_integrals}
L.~Mattner, \emph{Strict definiteness of integrals via complete monotonicity of
  derivatives}, Trans. Amer. Math. Soc. \textbf{349} (1997), 3321--3342.
	
	
\bibitem{nuttall}
J.~Nuttall, \emph{Hermite–Pad\'e approximants to functions meromorphic on a Riemann surface},
J. Approx. Theory 32 (1981), 233--240.

\bibitem{rakhmanov_orthogonal_s_curves}
E.~A. Rakhmanov, \emph{Orthogonal polynomials and {S}-curves}, Contemp.
  Math., vol. 578, Amer. Math. Soc., Providence, RI, 2012. 
	
\bibitem{rakhmanov_suetin}
E.~A. Rakhmanov and S.~P. Suetin,
\emph{The distribution of the zeros of the Hermite-Pad\'e polynomials for a pair
of  functions forming a Nikishin system},
Mat. Sb. \textbf{204} (2013), no.~9, 115--160.
English transl. in Sb. Math. \textbf{204} (2013), 1347--1390.

\bibitem{ransford_book}
T.~Ransford, \emph{Potential Theory in the Complex Plane}, London
  Mathematical Society Student Texts, vol.~28, Cambridge University Press,
  Cambridge, 1995.

\bibitem{Saff_book}
E.~B.~Saff and V.~Totik, \emph{Logarithmic Potentials with External
  Fields}, Grundlehren der Mathematischen Wissenschaften, vol. 316, Springer-Verlag, Berlin,
  1997.

\bibitem{shapiro_takemura_tater}  
B.~Shapiro, K.~Takemura, and M.~Tater, 
\emph{On spectral polynomials of the {H}eun equation. {II}},
Comm. Math. Phys. 311 (2013), 277--300.
  
\bibitem{shapiro_tater}  
B.~Shapiro and M.~Tater,
\emph{On spectral polynomials of the {H}eun equation. {I}},
J. Approx. Theory 162 (2010), no.~4, 766--781.   
  
\bibitem{stahl_extremal_domains}
H.~Stahl, \emph{Extremal domains associated with an analytic function.
  {I}, {II}}, Complex Variables Theory Appl. \textbf{4} (1985), no.~4,
  311--324, 325--338. 

\bibitem{stahl_structure_extremal_domains}
\bysame, \emph{The structure of extremal domains associated with an analytic
  function}, Complex Variables Theory Appl. \textbf{4} (1985), 339--354.

\bibitem{stahl_orthogonal_polynomials_complex_weight_function}
\bysame, \emph{Orthogonal polynomials with complex-valued weight function. {I},
  {II}}, Constr. Approx. \textbf{2} (1986), 225--240, 241--251.

\bibitem{stahl_orthogonal_polynomials_complex_measures}
\bysame, \emph{Orthogonal polynomials with respect to complex-valued measures},
  Orthogonal polynomials and their applications ({E}rice, 1990), IMACS Ann.
  Comput. Appl. Math., vol.~9, Baltzer, Basel, 1991, pp.~139--154.

\bibitem{stahl_sets_minimal_capacity}
\bysame, \emph{Sets of minimal capacity and extremal domains}, preprint
  arXiv:1205.3811v1.

\bibitem{strebel_book}
K.~Strebel, \emph{Quadratic Differentials}, Ergebnisse der Mathematik und
  ihrer Grenzgebiete, 
  vol.~5, Springer-Verlag, Berlin, 1984.

\end{thebibliography}

\end{document}